\newcommand{ \bl}{\color{blue}}
\newcommand{\norm}[1]{||#1||}
\newcommand{\ucite}[1]{\cite{#1}}
\begin{document}
\setlength{\baselineskip}{16pt}

\parindent 0.5cm
\evensidemargin 0cm \oddsidemargin 0cm \topmargin 0cm \textheight
22cm \textwidth 16cm \footskip 2cm \headsep 0cm

\newtheorem{theorem}{Theorem}[section]
\newtheorem{lemma}[theorem]{Lemma}
\newtheorem{proposition}[theorem]{Proposition}
\newtheorem{definition}{Definition}[section]
\newtheorem{example}{Example}[section]
\newtheorem{corollary}[theorem]{Corollary}

\newtheorem{remark}{Remark}[section]
\newtheorem{property}[theorem]{Property}
\numberwithin{equation}{section}
\newtheorem{mainthm}{Theorem}
\newtheorem{mainlem}{Lemma}

\numberwithin{equation}{section}

\def\p{\partial}
\def\I{\textit}
\def\R{\mathbb R}
\def\C{\mathbb C}
\def\u{\underline}
\def\l{\lambda}
\def\a{\alpha}
\def\O{\Omega}
\def\e{\epsilon}
\def\ls{\lambda^*}
\def\D{\displaystyle}
\def\wyx{ \frac{w(y,t)}{w(x,t)}}
\def\imp{\Rightarrow}
\def\tE{\tilde E}
\def\tX{\tilde X}
\def\tH{\tilde H}
\def\tu{\tilde u}
\def\d{\mathcal D}
\def\aa{\mathcal A}
\def\DH{\mathcal D(\tH)}
\def\bE{\bar E}
\def\bH{\bar H}
\def\M{\mathcal M}
\renewcommand{\labelenumi}{(\arabic{enumi})}

\def\disp{\displaystyle}
\def\undertex#1{$\underline{\hbox{#1}}$}
\def\card{\mathop{\hbox{card}}}
\def\sgn{\mathop{\hbox{sgn}}}
\def\exp{\mathop{\hbox{exp}}}
\def\OFP{(\Omega,{\cal F},\PP)}
\newcommand\JM{Mierczy\'nski}
\newcommand\RR{\ensuremath{\mathbb{R}}}
\newcommand\CC{\ensuremath{\mathbb{C}}}
\newcommand\QQ{\ensuremath{\mathbb{Q}}}
\newcommand\ZZ{\ensuremath{\mathbb{Z}}}
\newcommand\NN{\ensuremath{\mathbb{N}}}
\newcommand\PP{\ensuremath{\mathbb{P}}}
\newcommand\abs[1]{\ensuremath{\lvert#1\rvert}}

\newcommand\normf[1]{\ensuremath{\lVert#1\rVert_{f}}}
\newcommand\normfRb[1]{\ensuremath{\lVert#1\rVert_{f,R_b}}}
\newcommand\normfRbone[1]{\ensuremath{\lVert#1\rVert_{f, R_{b_1}}}}
\newcommand\normfRbtwo[1]{\ensuremath{\lVert#1\rVert_{f,R_{b_2}}}}
\newcommand\normtwo[1]{\ensuremath{\lVert#1\rVert_{2}}}
\newcommand\norminfty[1]{\ensuremath{\lVert#1\rVert_{\infty}}}

\title{Almost automorphically  and almost periodically forced circle flows of almost periodic parabolic equations on $S^1$}

\author {
\\
Wenxian Shen\\
Department of Mathematics and Statistics\\
 Auburn University, Auburn, AL 36849, USA
\\
\\
Yi Wang\thanks{Partially supported by NSF of China No.11771414, 11471305, Wu Wen-Tsun Key Laboratory and the Fundamental
Research Funds for the Central Universities.} \\
School of Mathematical Science\\
 University of Science and Technology of China
\\ Hefei, Anhui, 230026, P. R. China
\\
\\
 Dun Zhou\thanks{Corresponding author, E-mail address: zhd1986@mail.ustc.edu.cn. On leave from School of Mathematical Science,
 University of Science and Technology of China, Hefei, Anhui, 230026, P. R. China and partially supported by NSF of China No.11601498.}\\
 Department of Mathematics
 \\ Nanjing University of Science and Technology
 \\ Nanjing, Jiangsu, 210094, P. R. China
 \\
\\
}
\date{}

\maketitle

\begin{abstract}
We consider the skew-product semiflow which is generated by a scalar reaction-diffusion equation
\begin{equation*}
u_{t}=u_{xx}+f(t,u,u_{x}),\,\,t>0,\,x\in S^{1}=\mathbb{R}/2\pi \mathbb{Z},
\end{equation*}
where $f$ is uniformly almost periodic in $t$. The structure of the minimal set $M$ is thoroughly investigated under the assumption that the center space $V^c(M)$ associated with $M$ is no more than $2$-dimensional. Such situation naturally occurs while, for instance, $M$ is hyperbolic or uniquely ergodic. It is shown in this paper that $M$ is a $1$-cover of the hull $H(f)$ provided that $M$ is hyperbolic (equivalently, ${\rm dim}V^c(M)=0$). If ${\rm dim}V^c(M)=1$ (resp.
${\rm dim}V^c(M)=2$ with ${\rm dim}V^u(M)$ being odd), then either $M$ is an almost $1$-cover of $H(f)$ and topologically conjugate to a minimal flow in $\mathbb{R}\times H(f)$; or $M$ can be (resp. residually) embedded into an almost periodically
(resp. almost automorphically)  forced circle-flow $S^1\times H(f)$.

When $f(t,u,u_x)=f(t,u,-u_x)$ (which includes the case $f=f(t,u)$), it is proved that any minimal set $M$ is an almost $1$-cover of $H(f)$. In particular, any hyperbolic minimal set $M$ is a $1$-cover of $H(f)$. Furthermore, if ${\rm dim}V^c(M)=1$, then $M$ is either a $1$-cover of $H(f)$ or is topologically conjugate to a minimal flow in $\mathbb{R}\times H(f)$. For the general spatially-dependent nonlinearity $f=f(t,x,u,u_{x})$, we show that any stable or linearly stable minimal invariant set $M$ is residually embedded into $\mathbb{R}^2\times H(f)$.
\end{abstract}

\section{Introduction}
In this paper we investigate the large time behavior of bounded solutions for the scalar reaction-diffusion equations on the circle
\begin{equation}\label{equation-1}
u_{t}=u_{xx}+f(t,x, u,u_{x}),\,\,t>0,\,x\in S^{1}=\mathbb{R}/2\pi \mathbb{Z},
\end{equation}
where $f:\RR\times  \RR\times \RR\times\RR\to \RR$ is $C^{3}$, $f(t,x+2\pi,u,u_x)=f(t,x,u,u_x)$, and $f(t,x,u,p)$ together with all its derivatives (up to order $3$) are almost periodic in $t$ uniformly for $(x,u,p)$ in compact subsets.

To carry out our study for the non-autonomous equation \eqref{equation-1}, we will embed it into a skew-product semiflow in the following way. The function $f$ generates a family
$\{f_{\tau}:\tau \in \mathbb{R}\}$ in the space of continuous functions $C(\mathbb{R}\times S^{1} \times \mathbb{R} \times \mathbb{R},\mathbb{R})$ equipped with the compact open topology. Here $f_{\tau}(t,x,u,p)=f(t+\tau,x,u,p)(\tau \in \RR)$ denotes the time-translation of $f$. Let $H(f)$, called the hull of $f$, be the closure of $\{f_{\tau}:\tau\in \mathbb{R}\}$ in
the compact open topology. By the Ascoli--Arzela theorem, $H(f)$ is a compact metric space and every $g\in H(f)$ is uniformly almost periodic and has the same regularity as $f$. The action of time-translation $g\cdot t\equiv g_{t}\,(g\in H(f))$ defines a compact minimal flow on $H(f)$ (\cite{Sell,Shen1998}). This means that $H(f)$ is the only nonempty compact subset of itself
that is invariant under the flow $g\cdot t$. By introducing the hull $H(f)$, equation \eqref{equation-1}
gives rise to a family of equations associated to each $g\in H(f)$,
\begin{equation}\label{equation-lim1}
u_{t}=u_{xx}+g(t,x,u,u_{x}),\,\,\quad t>0,\quad x\in S^{1}.
\end{equation}

Let $X=X^{\alpha}$ ($\frac{1}{2}<\alpha<1$)  be the fractional power space associated with the operator $u\rightarrow
-u_{xx}:H^{2}(S^{1})\rightarrow L^{2}(S^{1})$, then the embedding
relation $X\hookrightarrow C^{1}(S^{1})$ is satisfied (that is, $X$ is compactly embedded in $C^1(S^1)$). For any $u\in X$, \eqref{equation-lim1} admits (locally) a unique solution $\varphi(t,\cdot;u,g)$ in $X$ with $\varphi(0,\cdot;u,g)=u(\cdot)$. This solution also continuously depends on $g\in H(f)$ and $u\in
X$. Therefore, \eqref{equation-lim1} defines a (local) skew product semiflow $\Pi^{t}$ on $X\times
H(f)$:
\begin{equation}\label{equation-lim2}
\Pi^{t}(u,g)=(\varphi(t,\cdot;u,g),g\cdot t),\quad t>0.
\end{equation}
Following from the work in \cite{Hen} and the standard a priori estimates for parabolic equations, it is known that if
$\varphi(t,\cdot;u,g) (u\in X)$ is bounded in $X$ in the existence interval of the solution, then $u$ is a globally defined classical solution. In the terminology of the skew-product semiflow \eqref{equation-lim2}, the study of dynamics of \eqref{equation-lim1} gives rise to the problem of understanding the $\omega$-limit set $\omega(u,g)$ of the bounded semi-orbit $\Pi^t(u,g)$ in $X\times H(f)$. Note that, for any $\delta>0$, $\{\varphi(t,\cdot;u,g):t\ge \delta\}$ is relatively compact in $X$. As a consequence, $\omega(u,g)$ is a nonempty connected compact subset of $X\times H(f)$. It is further known that $\Pi^{t}$ on the $\omega$-limit set $\omega(u,g)$ has a unique continuous backward time extension (see, e.g. \cite{hale1988asymptotic}).

In the case where $f$ is independent of $t$ (i.e., the autonomous case) or, equivalently, if $H(f)=\{f\}$, Fiedler and Mallet-Paret \cite{Fiedler} have shown the well-known Poincar\'{e}-Bendixson type Theorem for system \eqref{equation-1}. It states that any $\omega$-limit set $\omega(u)$ is either a single periodic orbit or it consists of equilibria and connecting (homoclinic and heteroclinic) orbits. In particular, if $f$ does not depend on $x$ (also called spatially-homogeneous), i.e., $f=f(u,u_x)$, the solution semiflow commutes with the natural action of shifting $x\in S^1$. Due to such $S^1$-equivariance,  Massatt \cite{Massatt1986} and Matano \cite{Matano} showed independently that any periodic orbit is a rotating wave $u=\phi(x-ct)$ for some $2\pi$-periodic function $\phi$ and constant $c$; and hence, $\omega(u)$ is either itself a single rotating wave, or a set of equilibria differing only by phase shift in $x$. Recently, transversality of the stable and unstable manifolds of hyperbolic equilibria and periodic orbits for autonomous system \eqref{equation-lim1} has been established in \cite{CR,JR1}. Based on this, Joly and Raugel \cite{JR2} have proved the generic Morse-Smale property for the system.

In the case that $f$ is time-periodic with period $1$ (equivalently, $H(f)$ is homeomorphic to the circle $\mathcal{T}^1=\mathbb{R}/\mathbb{Z}$), Chen and Matano \cite{Chen1989160} proved that for $f=f(t,u)$ independent of $x$ and $u_x$, the $\omega$-limit set $\omega(u)$ of any bounded solution consists of a unique time-periodic orbit with period $1$. If $f=f(t,u,u_x)$ is independent of $x$, Sandstede and Fiedler \cite{SF1} showed that the  $\omega$-limit set $\omega(u)$ can be viewed as a subset of the two-dimensional torus $\mathcal{T}^1\times S^1$ carrying a linear flow. However, one can not expect a simple asymptotic behavior of solutions for the general nonlinearity $f=f(t,x,u,u_x)$. As a matter of fact, Sandstede and Fiedler \cite{SF1} have further pointed out that chaotic behavior exhibited by any time-periodic planar vector field can also be found in certain time-periodic equation with the nonlinearity $f=f(t,x,u,u_x)$. On the other hand, Tere\v{s}\v{c}\'{a}k \cite{Te} proved that any $\omega$-limit set of the Poincar\'{e} map generated by the time-periodic system with $f=f(t,x,u,u_x)$ can be imbedded into a $2$-dimensional plane.

In the language of skew-product semiflows, Tere\v{s}\v{c}\'{a}k's result \cite{Te} implies that
each $\omega$-limit set $\omega(u,g)$ (with $g\in H(f)\sim\mathcal{T}^1$) of \eqref{equation-lim2} can be imbedded into $\RR^2\times H(f)$ (see Definition \ref{residual-imbed}). In particular, in the spatially-homogeneous case when $f=f(t,u,u_x)$ is time-periodic,  the result by  Sandstede and Fiedler \cite{SF1} entails that each $\omega$-limit set $\omega(u,g)$ can be imbedded into the periodically-forced circle flow $S^1\times H(f)$.

In nature, large quantities of systems evolve influenced by external effects which are roughly, but not
exactly periodic, or under environmental forcing which exhibits different, noncommensurate
periods. As a consequence, models with such time dependence are characterized more
appropriately by quasi-periodic or almost periodic equations or even by certain nonautonomous
equations rather than by periodic ones. Consequently, time non-periodic equations have been attracting more attention recently.

The current paper is devoted to the study of dynamics of time almost-periodic scalar parabolic equations with periodic boundary conditions.
For separated boundary conditions, one can refer to a series of work by Shen and Yi \ucite{Shen1995114,ShenYi-2,ShenYi-TAMS95,ShenYi-JDDE96,Shen1998}.
Among others, they \cite{Shen1995114,ShenYi-TAMS95} have proved that any minimal invariant set $M$ of the skew-product semiflow is an almost $1$-$1$ cover of $H(f)$; and hence, $M$ is an almost automorphic minimal set. In particular, if $M$ is hyperbolic, then $M$ is a $1$-$1$ cover of $H(f)$ (see \cite{ShenYi-2}). As in \ucite{Shen1995114,ShenYi-2,ShenYi-TAMS95,ShenYi-JDDE96,Shen1998}, the zero number properties developed in \cite{2038390,H.MATANO:1982} play important roles in their studies. With periodic boundary condition the zero number can still be applied. It does not yield the almost automorphy of the minimal sets in general, however. To the best of our knowledge, the structures of the minimal sets have been hardly studied for scalar parabolic equations, even for the spatially-homogeneous system $f=f(t,u,u_x)$, with periodic boundary condition (see \cite{Chow1995,Wang} for some partial related results). As a consequence, we will try to initiate our research on this aspect.

We first  consider the spatially-homogeneous case of $f=f(t,u,u_x)$. The structure of the minimal set $M$ for \eqref{equation-lim2}  will be thoroughly investigated  under the assumptions
 that the center space $V^c(M)$ associated with $M$ is no more than $2$-dimensional.  Such situation naturally occurs, for instance, while $M$ is uniquely ergodic (see Theorem \ref{ergodic-thm}(1)) or $M$ is hyperbolic. We denote by $V^u(M)$ the unstable space associated  to $M$.
 Among others, we will prove

\medskip

\noindent $\bullet$ {\it  If ${\rm dim}V^c(M)=2$ with ${\rm dim}V^u(M)$ being odd, then either

\begin{itemize}
\item[{\rm (i)}]  $M$ is  an almost $1$-cover of $H(f)$ and topologically conjugate to a minimal flow in $\mathbb{R}\times H(f)$ (see Theorem \ref{almost-automorphic-thm}(1)); or
\vspace{-0.1in}
\item[{\rm (ii)}]  $M$ can be residually embedded into an almost automorphically  forced circle-flow $S^1\times H(f)$ (see Theorem \ref{sysm-embed}(1)).
\end{itemize}
\vspace{-0.1in}
}

\medskip
\smallskip

\noindent $\bullet$ {\it  If ${\rm dim}V^c(M)=1$,  then either
\begin{itemize}
\item[{\rm (i)}] $M$ is an almost $1$-cover of $H(f)$ and the dynamics on $M$ is topologically conjugate to a   minimal flow in $\mathbb{R}\times H(f)$ (see Theorem \ref{almost-automorphic-thm}(1)); or
\vspace{-0.1in}
\item[{\rm (ii)}] $M$ is normally hyperbolic and can be embedded into an almost periodically forced circle-flow $S^1\times H(f)$ (see Theorem \ref{sysm-embed}(2)).
\end{itemize}
\vspace{-0.1in}
}

\medskip
\smallskip

\noindent $\bullet$ {\it  If ${\rm dim}V^c(M)=0$ (equivalently, $M$ is called {\it hyperbolic}), then $M$ is a $1$-cover of $H(f)$
and the dynamics  on $M$ is topologically conjugate to an almost periodic  minimal flow in $\mathbb{R}\times H(f)$
(see Theorem \ref{almost-automorphic-thm}(4)).}

\medskip
\smallskip

\noindent $\bullet$ {\it Any spatially homogeneous minimal set $M$ is an almost $1$-cover of $H(f)$ and the dynamics on $M$ is topologically conjugate to an almost automorphic  minimal flow in $\mathbb{R}\times H(f)$
(see Theorem \ref{almost-automorphic-thm}(1)).}

\bigskip

\noindent $\bullet$ {\it If $M$ is linearly stable, then $M$ is spatially homogeneous and hence is an almost $1$-cover of $H(f)$
(see Theorem \ref{almost-automorphic-thm}(2))}.

\vskip 2mm
We also remark that, for $f=f(t,u,u_x)$, if $M$ is a spatially homogeneous minimal set or uniquely ergodic, then one may obtain the oddness of ${\rm dim}V^u(M)$ (see Theorem \ref{ergodic-thm}). In general, it remains open whether ${\rm dim}V^u(M)$ is odd provided that
${\rm dim}V^u(M)\not =0$.
\vskip 2mm

Comparing with the results in\cite{ShenYi-2,ShenYi-JDDE96}  for separated boundary conditions, one can still observe here the $1$-cover property of the hyperbolic minimal sets; while for the case ${\rm dim}V^c(M)=1$, we obtained the new phenomena for periodic boundary conditions that $M$  can be embedded into an almost periodically forced circle flow $S^1\times H(f)$, which is a natural generalization of the rotating waves in \cite{Massatt1986,Matano} (autonomous cases) and the two-dimensional torus flow in \cite{SF1} (time-periodic cases) to time almost-periodic systems. It also deserves to point out that an almost periodically forced circle flow could still be very complicated (see Huang and Yi \cite{HuYi} and the references therein). The new phenomena we discovered here reinforce the appearance of the almost periodically forced circle flow in the infinite-dimensional dynamical systems generated by certain evolutionary equations.

\vskip 1mm
When $f(t,u,p)=f(t,u,-p)$ in \eqref{equation-1} (which includes the case $f=f(t,u)$), more information of the structure of a minimal set for \eqref{equation-lim2} can be obtained. We will show, in this case,

\medskip

\noindent $\bullet$ {\it  Any minimal set $M$ is an almost $1$-cover of $H(f)$. Moreover,  $M$ is a $1$-cover of $H(f)$
 provided that $M$ is hyperbolic or  ${\rm dim}V^c(M)=1$  (see Theorem \ref{almost-periodic-thm}).}

\medskip

\noindent  Thus, we have also generalized the convergence results in \cite{Chen1989160} from time-periodic systems to time almost-periodic systems.

\vskip 1mm
Finally, we will consider the general nonlinearity $f=f(t,x,u,u_x)$. We will show

\medskip

\noindent $\bullet$ {\it
Any linearly stable or stable minimal set $M$ is
{\it residually embedded} (see Definition \ref{residual-imbed}) into $\mathbb{R}^2\times H(f)$. In particular, the $\omega$-limit set of any uniformly stable bounded trajectory can be embedded into $\mathbb{R}^2\times H(f)$ (see Theorem \ref{main-result-thm}).}

\medskip

\noindent The above embedding property for the minimal sets partially extends the
results of Fiedler and Mallet-Paret \cite{Fiedler} (autonomous cases) and Tere\v{s}\v{c}\'{a}k \cite{Te}  (time-periodic cases) to time almost-periodic systems.

The paper is organized as follows. In Section 2, we agree on some notations, give relevant
definitions and preliminary results, including the Floquet bundles theory and the invariant manifolds theory for skew-product semiflows, which will be important to our proofs.
 In Section 3, we will investigate the skew-product semiflow \eqref{equation-lim2} generated by \eqref{equation-lim1} with $f=f(t,u,u_x)$. The structure of the minimal set $M$ is investigated under the assumption that ${\rm dim}V^c(M)=1$, or ${\rm dim}V^c(M)=2$ with ${\rm dim}V^u(M)$ being odd (Theorem \ref{sysm-embed}). The new phenomena is found in this section that $M$  can be embedded into an almost periodically forced circle flow $S^1\times H(f)$. We also obtain in this section that the unique ergodicity of $M$ implies that ${\rm dim}V^c(M)\le 2$. In Section 4, we focus on the (almost) $1$-cover property of minimal sets for  \eqref{equation-lim1} with $f=f(t,u,u_x)$. In particular, when $f=f(t,u)$, any minimal set $M$ is an almost $1$-cover. In Section 5, we study the embedding properties of linearly stable and stable minimal sets of \eqref{equation-lim2} in the general case $f=f(t,x,u,u_x)$.

\section{Notations and preliminaries results}
 In this section, we summarize some preliminary materials to be used in later sections.
We start by summarizing some lifting properties of compact dynamical systems.  Next,  we
give a brief review about almost periodic (automorphic) functions. We then present some
basic properties of zero numbers of solutions for linear parabolic equations. Finally, we present some basic properties about  Floquet bundles and invariant subspaces of linear parabolic  equations on $S^1$ and some basic properties about invariant manifolds of nonlinear parabolic equations on $S^1$.

\subsection{Lifting properties of compact dynamical systems}

 Let $Y$ be a compact metric space with metric $d_{Y}$, and
$\sigma:Y\times \RR\to Y, (y,t)\mapsto y\cdot t$ be a continuous
flow on $Y$, denoted by $(Y,\sigma)$ or $(Y,\RR)$. A subset
$S\subset Y$ is {\it invariant} if $\sigma_t(S)=S$ for every $t\in
\RR$. A subset $S\subset Y$ is called {\it minimal} if it is
compact, invariant and the only non-empty compact invariant subset
of it is itself.  Every compact and $\sigma$-invariant set contains a minimal subset and a subset $S$ is minimal if and only if every trajectory is
 dense. The continuous flow $(Y,\sigma)$ is called to be
{\it recurrent} or {\it minimal} if $Y$ is minimal. We
say that the flow $(Y,\sigma)$ is {\it distal} when, for each pair
$y_1,y_2$ of different elements of $Y$, there is a $\delta>0$ such
that $d_Y(y_1\cdot t,y_2\cdot t)>\delta$ for every $t\in
\mathbb{R}$.

If $(Z,\RR)$ is another continuous flow, {\it
a flow homomorphism} from $(Z,\RR)$ to $(Y,\sigma)$ is a continuous
mapping $p$ from $Z$ to $Y$ such that $p(z\cdot t)=p(z)\cdot t$ for
all $z\in Z$ and $t\in \RR$. An onto flow homomorphism is called a {\it flow epimorphism}.
It is easy to see that if $(Y,\sigma)$ is minimal then the flow homomorphism $p$ from $Z$ to $Y$ is a flow epimorphism. The following lemma is adopted from \cite{Shen1998} and will play important roles in our
forthcoming sections.
\begin{lemma}\label{epimorphism-thm}
Let $p:(Z,\mathbb{R})\rightarrow(Y,\mathbb{R})$ be an epimorphism of flows, where $Z,Y$ are compact
metric spaces. Then the set
\begin{center}
$Y'=\{y_{0}\in Y: for \ any\ z_{0}\in p^{-1}(y_{0}),\ y\in Y \ and\ any\ sequence\
\{t_{i}\}\subset \RR \ with\ y\cdot t_{i}\rightarrow y_{0},\ there\ is\ a\ sequence\
\{z_{i}\}\in p^{-1}(y)\ such\ that\ z_{i}\cdot t_{i}\rightarrow z_{0} \}$
\end{center}
\vskip -2mm
is residual and invariant. In particular, if $(Z,\mathbb{R})$ is minimal and distal, then $Y'=Y$.
\end{lemma}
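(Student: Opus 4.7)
The plan is to verify invariance directly from the $\RR$-action, deduce residuality by combining upper semicontinuity of the fibre multifunction $F:Y\to 2^Z$, $F(y)=p^{-1}(y)$, with a Baire-category argument (Fort's theorem on set-valued maps), and handle the minimal distal case via the classical openness of factor maps of minimal distal flows. For invariance, fix $y_0\in Y'$, $s\in\RR$, and let $u_0'\in p^{-1}(y_0\cdot s)$, $y\in Y$, $t_i\in\RR$ with $y\cdot t_i\to y_0\cdot s$. Set $u_0:=u_0'\cdot(-s)\in p^{-1}(y_0)$; by continuity of the flow, $y\cdot(t_i-s)\to y_0$. Applying the defining property of $Y'$ at $y_0$ produces $u_i\in p^{-1}(y)$ with $u_i\cdot(t_i-s)\to u_0$, so that $u_i\cdot t_i\to u_0\cdot s=u_0'$, proving $y_0\cdot s\in Y'$.

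For residuality, compactness of $Z$ and continuity of $p$ make $F$ an upper semicontinuous multifunction with compact nonempty values; by Fort's theorem the set $Y_0\subset Y$ of continuity points of $F$ (points at which $F$ is also lower semicontinuous) is a dense $G_\delta$. I claim $Y_0\subseteq Y'$. Fix $y_0\in Y_0$, $u_0\in p^{-1}(y_0)$, $y\in Y$, and $t_i$ with $y\cdot t_i\to y_0$. For each $k\ge 1$, lower semicontinuity of $F$ at $y_0$ yields a neighborhood $V_k$ of $y_0$ such that $F(y')\cap B(u_0,1/k)\ne\emptyset$ for every $y'\in V_k$; since $y\cdot t_i\in V_k$ for all sufficiently large $i$ and $p^{-1}(y\cdot t_i)=p^{-1}(y)\cdot t_i$, one may choose integers $N_1<N_2<\cdots$ and points $u_i^{(k)}\in p^{-1}(y)$ with $d_Z(u_i^{(k)}\cdot t_i,u_0)<1/k$ for $N_k\le i<N_{k+1}$. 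Setting $u_i:=u_i^{(k)}$ on the $k$-th block (and arbitrarily for the initial segment) yields $u_i\cdot t_i\to u_0$; hence $Y_0\subseteq Y'$ and $Y'$ is residual.

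For the final assertion, when $(Z,\RR)$ is minimal and distal, Ellis's theorem asserts that the Ellis semigroup $E(Z)$ is a compact group of homeomorphisms acting transitively on $Z$, and a standard consequence is that any flow epimorphism $p$ whose total space is minimal and distal is an open map. Openness of $p$ is equivalent to lower semicontinuity of $F$ at every point of $Y$; combined with the upper semicontinuity above, $F$ is continuous on all of $Y$, so every $y_0\in Y$ is a continuity point and the argument of the previous paragraph produces $Y'=Y$.

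I expect this last step to be the main obstacle. Invariance is a one-line translation, and residuality is a direct application of Fort's theorem together with a diagonal choice, both of which use only general topology. The openness of distal factor maps of minimal flows, by contrast, is a genuinely structural result resting on Ellis's characterization of distality via the group property of $E(Z)$ and on the transitive action of $E(Z)$ on fibres; if a self-contained proof is preferred over a citation, one would need to develop enough Ellis-semigroup machinery to recover this openness from first principles.
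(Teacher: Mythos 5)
Your proof is correct and follows what is essentially the standard route to this kind of lemma (and, as far as I can tell, the route taken in the cited reference Shen--Yi \cite[Lemma I.2.16]{Shen1998}): flow-translation for invariance, Baire category applied to the semicontinuity of the fibre multifunction $y\mapsto p^{-1}(y)$ for residuality, and openness of distal factor maps for the final assertion. A small remark: Fort's theorem gives you the residual set $Y_0$ of \emph{continuity} points of $F$, and you correctly observe that $Y_0\subseteq Y'$ suffices (the complement of $Y'$ being contained in the meagre complement of $Y_0$); for the distal case, the fact that a factor map from a minimal distal flow is open is indeed a genuine input from Ellis--Auslander structure theory (the induced semigroup map $E(Z)\to E(Y)$ is onto, one lifts $g_n\in E(Y)$ with $g_ny_0=y_n$ to $\tilde g_n\in E(Z)$, passes to a limit $\tilde g$ fixing $y_0$, and uses that $\tilde g$ is a \emph{bijection} of the fibre $p^{-1}(y_0)$ — no continuity of $\tilde g$ is needed), so citing it rather than redeveloping it is the right call here.
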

\begin{proof}
See \cite[Lemma I.2.16]{Shen1998} and the remarks below it.
\end{proof}

 Let $X,Y$ be metric spaces and $(Y,\sigma)$ be a compact flow (called the base flow). Let also
 $\RR^+=\{t\in \RR:t\ge 0\}$. A
 skew-product semiflow $\Pi^t:X\times Y\rightarrow X\times Y$ is a semiflow
 of the following form
 \begin{equation}\label{skew-product-semiflow}
 \Pi^{t}(u,y)=(\varphi(t,u,y),y\cdot t),\quad t\geq 0,\, (u,y)\in X\times Y,
 \end{equation}
satisfying (i) $\Pi^{0}={\rm Id}_X$ and (ii) the co-cycle property:
$\varphi(t+s,u,y)=\varphi(s,\varphi(t,u,y),y\cdot t)$ for each $(u,y)\in X\times Y$ and $s,t\in \RR^+$.
 A subset $A\subset
X\times Y$ is {\it positively invariant} if $\Pi^t(A)\subset A$ for
all $t\in \RR^+$. The {\it forward orbit} of any $(u,y)\in X\times
Y$ is defined by $\mathcal{O}^+(u,y)=\{\Pi^t(u,y):t\ge 0\}$, and the
{\it $\omega$-limit set} of $(u,y)$ is defined by
$\omega(u,y)=\{(\hat{u},\hat{y})\in X\times Y:\Pi^{t_n}(u,y)\to
(\hat{u},\hat{y}) (n\to \infty) \textnormal{ for some sequence
}t_n\to \infty\}$.

A {\it flow extension} of a skew-product semiflow $\Pi^t$
 is a continuous skew-product flow $\hat{\Pi}^t$ such that $\hat{\Pi}^t(u,y)=\Pi^t(u,y)$ for each
$(u,y)\in X\times Y$ and $t\in \RR^+$. A compact positively
invariant subset is said to admit {\it a flow extension} if the
semiflow restricted to it does. Actually, a compact positively
invariant set $K\subset X\times Y$ admits a flow extension if every
point in $K$ admits a unique backward orbit which remains inside the
set $K$ (see \cite[part II]{Shen1998}). A compact positively
invariant set $K\subset X\times Y$ for $\Pi^t$ is called {\it minimal} if it
does not contain any other nonempty compact positively invariant set
than itself.

Let $K\subset X\times Y$ be a
positively invariant set for $\Pi^t$ which admits a flow extension.
Let also $p:X\times Y\to Y$ be the natural projection. Then $p$ is a flow
homomorphism for the flows $(K,\RR)$ and $(Y,\sigma)$.  Moreover, $K\subset X\times Y$ is called an
{\it almost $1$-cover} ({\it $1$-cover}) of $Y$ if card$(p^{-1}(y)\cap K)=1$
 for at least one $y\in Y$ (for any $y\in Y$).

 Now let us recall some definitions concerning the stability of the trajectories of the
semiflows.

\begin{definition}\label{stability}
{\rm  Let $d_X$ be the metric on $X$. \begin{description}
\item[{\rm (1)}] (Stability) A forward orbit $\mathcal{O}^+(u_0,y_0)$ of
\eqref{skew-product-semiflow} is said to be {\it
stable} if for every $\varepsilon>0$ and $s\ge 0$, there is a
$\delta=\delta(\varepsilon,s)>0$ such that, for every $u\in X$, if
$d_X(\varphi(s,u_0,y_0),\varphi(s,u,y_0))\le \delta$ then
$$d_X(\varphi(t+s,u_0,y_0),\varphi(t+s,u,y_0))<\varepsilon \textnormal{ for each
}t\ge 0.$$ A minimal set $M$ is called {\it stable} if $\mathcal{O}^+(u_*,y_*)$ is stable
for any point $(u_*,y_*)\in M$;
\item[{\rm (2)}] (Uniform stability) A forward orbit $\mathcal{O}^+(u_0,y_0)$ of \eqref{skew-product-semiflow}
is said to be {\it uniformly stable} if for every $\varepsilon>0$
there is a $\delta=\delta(\varepsilon)>0$, called the {\it modulus
of uniform stability}, such that, for every $u\in X$, if $s\ge 0$
and $d_X(\varphi(s,u_0,y_0),\varphi(s,u,y_0))\le \delta(\varepsilon)$ then
$$d_X(\varphi(t+s,u_0,y_0),\varphi(t+s,u,y_0))<\varepsilon \textnormal{ for
each }t\ge 0.$$
\end{description}}
\end{definition}

\begin{remark}\label{stable-all}
{\rm  If $\mathcal{O}^+(u,y)$ is relatively compact and uniformly
stable, then for every point $(u_*,y_*)\in \omega(u,y)$,
$\mathcal{O}^+(u_*,y_*)$ is uniformly stable with the same modulus
of uniform stability as that of the $\mathcal{O}^+(u,y)$ (see
\cite{Sell,Shen1998})}.
\end{remark}

We now assume additionally that $X$ is a Banach space. Assume also that the cocycle $\varphi$ in \eqref{skew-product-semiflow}
is
$C^{1+\alpha}$ ($0<\alpha \leq 1$) for $u\in X$, that is, $\varphi$ is $C^{1}$ in
$u$, and the derivative $\varphi_{u}$ is continuous in $y\in Y,\,t>0$ and is $C^{\alpha}$ in
$u$; and moreover, for any $v\in X$,
$$\varphi_{u}(t,u,y)v\rightarrow v \quad \textnormal{ as }\quad t\rightarrow 0^+,$$
uniformly for $(u,y)$ in compact subsets of $X\times Y$. Let $K\subset X\times Y$ be a
compact, positively invariant set which admits a flow extension. Define
$\Phi(t,u,y)=\varphi_{u}(t,u,y)$
for $(u,y)\in K,\,t\geq 0$. Then the operator $\Phi$ generates a linear skew-product semiflow $\Psi$ on $(X\times K,\mathbb{R}^{+})$ associated with \eqref{skew-product-semiflow} over $K$ as follows:
\begin{equation}\label{linearized-skew-product}
\Psi(t,v,(u,y))=(\Phi(t,u,y)v,\Pi^t(u,y)),\,\,t\geq 0,\,(u,y)\in K,\, v\in X.
\end{equation}
For each $(u,y)\in K$, define the Lyapunov exponent
$\lambda(u,y)=\limsup\limits_{t \to \infty }{\frac{\ln||\Phi(t,u,y)||}{t}},$ where
$\norm{\cdot}$ is the operator norm of $\Phi(t,u,y)$.
We call the number $\lambda_{K}={\sup}_{(u,y)\in K}\lambda(u,y)$ the {\it upper
Lyapunov exponent} on $K$.
\begin{definition}
{\rm $K$ is said to be {\it linearly stable}, if $\lambda_{K}\leq 0$.}
\end{definition}

Let $K\subset X\times Y$ (here $X$ is a strongly ordered Banach space, see \cite[Definition II.4.4]{Shen1998}) be a compact invariant set of the strongly monotone skew-product semiflow $\Pi^t$ and $X_1(u,y)=\mathrm{span}\{v(u,y)\}$, $X_2(u,y)$ ($(u,y)\in K$, $v(u,y)\in \mathrm{Int} X^+$ with $\|v(u,y)\|=1$) be the subspaces associated to the continuous separation of $(K,\mathbb{R})$ (see \cite[Definition II.4.6 and Theorem II.4.4]{Shen1998} for the definition and existence of continuous separation of $(K,\mathbb{R}$)). Define
\begin{equation}\label{principal}
  l(t,u,y)=\|\Phi(t,u,y)v(u,y)\|,
\end{equation}
where $(u,y)\in K$, $t\in\mathbb{R}$. By invariance of $X_1(u,y)$, \eqref{principal} generates a linear skew-product flow $\tilde\Psi:\mathbb{R}\times\mathbb{R}\times K\to \mathbb{R}\times K$,
\begin{equation}\label{induced-linear}
  \tilde\Psi(t,z,(u,y))=(l(t,u,y)z,\Pi^t(u,y)).
\end{equation}

\begin{lemma}\label{upper-lya}
Let $K\subset X\times Y$ be a compact invariant set of the strongly monotone skew-product semiflow $\Pi^t$. Then the upper Lyapunov exponent of \eqref{linearized-skew-product} coincides with the upper Lyapunov exponent of \eqref{induced-linear}.
\end{lemma}
\begin{proof}
  See \cite[Proposition II 4.10]{Shen1998}.
\end{proof}

\begin{definition}
  {\rm
  Two flows $(Y,\mathbb{R})$ and $(Z,\mathbb{R})$ are said to be {\it topologically conjugate} if there is a homeomorphism $h:Y\to Z$ such that $h(y\cdot t)=h(y)\cdot t$ for all $y\in Y$ and $t\in \mathbb{R}$.}
\end{definition}
\begin{definition}\label{residual-imbed}
{\rm Let $X_1$ be a metric space. A minimal subset $K\subset X\times Y$ (here we $X$, $Y$ are metric spaces) is said to be {\it residually
embedded} into $X_1\times Y$, if there exist a residual invariant set $Y_*\subset Y$ and a flow $\hat{\Pi}^t$ defined on some subset $\hat{K}\subset X_1\times Y_*$ such that the flow $\Pi^t|_{K\cap p^{-1}(Y_*)}$ is topologically conjugate to $\hat{\Pi}^t$  on $\hat{K}$.
Moreover, when $Y_*=Y$, we call $K$ is {\it embedded} into $X_1\times Y$.
}\end{definition}

\subsection{Almost periodic and almost automorphic functions}

 A function $f\in C(\RR,\RR)$ is {\it almost periodic} if, for any
$\varepsilon>0$, the set
$T(\varepsilon):=\{\tau:\abs{f(t+\tau)-f(t)}<\varepsilon,\,\forall
t\in \RR\}$ is relatively dense in $\RR$. $f$ is {\it almost automorphic} if for every $\{t'_k\}\subset\mathbb{R}$ there is a subsequence $\{t_k\}$
and a function $g:\mathbb{R}\to \mathbb{R}$ such that $f(t+t_k)\to g(t)$ and $g(t-t_k)\to f(t)$ point wise.
Let $D\subseteq \RR^m$ be a nonempty set. A continuous function
$f:\RR\times D\to\RR;(t,w)\mapsto f(t,w),$ is said to be {\it
admissible} if $f(t,w)$ is bounded and uniformly continuous on
$\RR\times K$ for any compact subset $K\subset D$.  A function $f\in
C(\RR\times D,\RR)(D\subset \RR^m)$ is {\it uniformly almost
periodic (automorphic)} {\it in $t$}, if $f$ is both
admissible and almost periodic (automorphic) in $t\in \RR$.

Let $f\in C(\RR\times D,\RR) (D\subset \RR^m)$ be admissible. Then
$H(f)={\rm cl}\{f\cdot\tau:\tau\in \RR\}$ is called the {\it hull of
$f$}, where $f\cdot\tau(t,\cdot)=f(t+\tau,\cdot)$ and the closure is
taken under the compact open topology. Moreover, $H(f)$ is compact
and metrizable under the compact open topology (see \cite{Sell,Shen1998}).
The time translation $g\cdot t$ of $g\in H(f)$ induces a natural
flow on $H(f)$ (cf. \cite{Sell}).

\begin{remark}\label{a-p-to-minial}
{\rm  If $f$ is a uniformly almost periodic function in $t$, then $H(f)$ is always {\it minimal and distal}.
 Moreover, every $g\in H(f)$ is an  uniformly almost periodic function (see, e.g. \cite{Shen1998}). Further, assume that $(X,\mathbb{R})$ is a minimal flow and let $p:X\to H(f)$ be a flow homomorphism such that $X$ is an almost $1$-cover of $H(f)$. Then,
 $$
 \{x\in X: \text{$x\cdot t$ is an almost automorphic function of $t$}\}=\{x\in X: p^{-1}p(x)=\{x\}\}
 $$(see \cite[Section 5.4]{Veech1965} or \cite[Corollary I 2.15]{Shen1998}). }
\end{remark}

\subsection{Zero number properties of linear parabolic equations on $S^1$}

As the zero number is a very important tool in our proofs, we provide in this section the definition of the
zero number and list some related properties.

Given a $C^{1}$-smooth function $u:S^{1}\rightarrow \mathbb{R}^{1}$, the zero number of $u$ is
 defined as
$$z(u(\cdot))={\rm card}\{x\in S^{1}:u(x)=0\}.$$
 We now list some properties of the zero number (see, e.g. \cite{2038390,H.MATANO:1982} or \cite[Lemma 2.2]{SF1}).
\begin{lemma}\label{zero-number}
Consider the linear system
\begin{equation}\label{linear-equation}
\begin{cases}
\varphi_{t}=\varphi_{xx}+b \varphi_{x}+c\varphi,\quad x\in S^1,\\
\varphi_{0}=\varphi(0,\cdot)\in H^{1}(S^{1}),
\end{cases}
\end{equation}
where the coefficients $b,c$ are allowed to depend on $t$ and $x$ such that
$b,b_{t},b_{x},c\in L_{loc}^\infty$. Let $\varphi(t,x)$ be a nontrivial solution of \eqref{linear-equation}.
Then the following properties holds.\par
{\rm (a)} $z(\varphi(t,\cdot))<\infty,\forall t>0$ and is non-increasing in t.\par
{\rm (b)}  $z(\varphi(t,\cdot))$ can drop only at $t_{0}$ such that $\varphi(t_{0},\cdot)$ has a
multiple zero in $S^{1}$.\par
{\rm (c)}  $z(\varphi(t,\cdot))$ can drop only finite many times,and there exists a $T>0$
such that $\varphi(t,\cdot)$ has only simple zeros in $S^{1}$ as $t\geq T$(hence
$z(\varphi(t,\cdot))=constant$ as $t\geq T$).
\end{lemma}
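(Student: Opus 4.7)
The plan is to establish the three properties in order, combining parabolic regularity, backward uniqueness, and a local normal-form analysis near multiple zeros.

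First, parabolic regularity gives $\varphi(t,\cdot)\in C^{2,\alpha}(S^{1})$ for every $t>0$ via a standard Schauder bootstrap using the assumed $L^\infty_{\mathrm{loc}}$ bounds on $b,b_t,b_x,c$. With this regularity I would establish part (a) in two steps. For finiteness at each fixed $t>0$, I would argue that no zero can have infinite order: if $\partial_x^k\varphi(t_0,x_0)=0$ for every $k\ge 0$, then Aronszajn-type unique continuation / backward uniqueness for scalar parabolic operators with bounded coefficients forces $\varphi\equiv 0$, contradicting nontriviality. Hence zeros of $\varphi(t,\cdot)$ are isolated on the compact manifold $S^{1}$ and their number is finite.

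For monotonicity in (a) and the location of drops in (b), I would pass to a local analysis. Near a simple zero the implicit function theorem produces a $C^{1}$ curve $t\mapsto x(t)$ of zeros, so $z(\varphi(t,\cdot))$ is locally constant; consequently it can only change at a time $t_0$ when some zero $x_0$ is multiple, which is exactly (b). To prove the strict drop at such a $(t_0,x_0)$, I would introduce a gauge transformation $\varphi=e^{A(t,x)}\tilde\varphi$ with $A_x=-b/2$ to remove the first-order term, then rescale parabolically about $(t_0,x_0)$ and compare $\tilde\varphi$ with an explicit heat-polynomial solution possessing a zero of order $k$ at the origin. The number of zeros of these Hermite-type model solutions strictly decreases as $t$ crosses zero, and a continuity/stability argument transfers this drop to $\tilde\varphi$, hence to $\varphi$. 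Summing over the finitely many multiple zeros at $t_0$ yields the non-increasing property together with a strict decrease whenever a multiple zero exists.

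Part (c) is then essentially a counting argument: $z(\varphi(t,\cdot))$ is a non-negative integer that is non-increasing in $t$ and strictly decreases at each time a multiple zero appears, so such times are finite in number; beyond the last such time $T$, all zeros must be simple. The main obstacle I expect is the rigorous drop lemma in paragraph two: the coefficients $b,c$ are merely bounded and genuinely depend on both $t$ and $x$, so the gauge reduction and parabolic rescaling must be carried out carefully to control the error between $\tilde\varphi$ and its heat-equation model uniformly on a small parabolic cylinder. This is precisely where the regularity hypotheses $b_t,b_x\in L^\infty_{\mathrm{loc}}$ are used, since they make the gauge factor $e^{A(t,x)}$ smooth enough to preserve the local zero structure under the transformation, and this step replicates the technical heart of Angenent's and Matano's original arguments.
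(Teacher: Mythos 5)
The paper does not supply its own proof of this lemma: it simply cites Angenent, Matano, and Sandstede--Fiedler for the zero number properties. Your sketch reconstructs exactly that classical argument -- finite vanishing order via parabolic unique continuation for finiteness of $z$, local constancy near simple zeros by the implicit function theorem, and the strict drop at multiple zeros by gauging out the drift term and comparing with heat-polynomial (Hermite) models after a parabolic rescaling -- and your counting argument for part (c) correctly leverages the strict-drop statement you prove en route, so the proposal is sound and takes the same route as the references the paper points to. One remark: the substitution $A_x=-b/2$ you use need not define a single-valued function on $S^1$, since $\int_{S^1}b\,dx$ is generally nonzero; this is harmless in your local analysis near $(t_0,x_0)$, where $A$ need only be defined on a small parabolic cylinder, but it is worth noting that a global gauge on the circle requires first subtracting the mean of $b$ (as the paper itself does in its proof of Lemma \ref{stable-imply-linear-stable}) -- the local form you use here is exactly what Angenent's argument requires.
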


\begin{lemma}\label{sigma-function}
For any $g\in H(f)$, Let $\varphi(t,\cdot;u,g)$ and $\varphi(t,\cdot;\hat{u},g)$ be
distinct solutions of {\rm (\ref{equation-lim1})} on
$ \mathbb{R}^+$. Then

\begin{description} \item[{\rm (a)}]
$z(\varphi(t,\cdot;u,g)-\varphi(t,\cdot;\hat{u},g))<\infty$ for $t>0$ and is non-increasing in t;

\item[{\rm (b)}] $z(\varphi(t,\cdot;u,g))-\varphi(t,\cdot;\hat{u},g)))$
strictly decreases at $t_0$ such that the function $\varphi(t_0,\cdot;u,g))-\varphi(t_0,\cdot;\hat{u},g)$ has a multiple
zero in $S^1$;

\item[{\rm (c)}] $z(\varphi(t,\cdot;u,g))-\varphi(t,\cdot;\hat{u},g)))$ can drop only finite many times, and there exists a $T>0$ such that  $$z(\varphi(t,\cdot;u,g))-\varphi(t,\cdot;\hat{u},g)))\equiv
\textnormal{constant}$$ for all $t\ge T$.
\end{description}
\end{lemma}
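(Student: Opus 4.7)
The plan is to reduce Lemma \ref{sigma-function} directly to Lemma \ref{zero-number} by showing that the difference $w(t,x):=\varphi(t,x;u,g)-\varphi(t,x;\hat u,g)$ satisfies a linear parabolic equation of the form \eqref{linear-equation} with coefficients of the required regularity. Once this reduction is accomplished, statements (a)--(c) follow immediately from the corresponding statements in Lemma \ref{zero-number}, since $w\not\equiv 0$ by uniqueness of the Cauchy problem for \eqref{equation-lim1} (two distinct solutions cannot coincide at any time).

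First I would introduce the standard ``interpolation'' coefficients. Setting
$$
b(t,x):=\int_0^1 g_p\bigl(t,x,\,\hat\varphi+s(\varphi-\hat\varphi),\,\hat\varphi_x+s(\varphi_x-\hat\varphi_x)\bigr)\,ds,
$$
$$
c(t,x):=\int_0^1 g_u\bigl(t,x,\,\hat\varphi+s(\varphi-\hat\varphi),\,\hat\varphi_x+s(\varphi_x-\hat\varphi_x)\bigr)\,ds,
$$
the fundamental theorem of calculus gives
$$
g(t,x,\varphi,\varphi_x)-g(t,x,\hat\varphi,\hat\varphi_x)=b(t,x)\,w_x+c(t,x)\,w,
$$
so that $w$ solves $w_t=w_{xx}+b\,w_x+c\,w$ on $S^1$.

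Next I would verify the coefficient regularity required by Lemma \ref{zero-number}, namely $b,\,b_t,\,b_x,\,c\in L^\infty_{\mathrm{loc}}$. Because $g$ is $C^3$ and, by the parabolic smoothing mentioned right after \eqref{equation-lim2}, the classical solutions $\varphi,\hat\varphi$ become as smooth in $x$ as we need for $t>0$ (with $\varphi,\varphi_x$ and their time- and space-derivatives locally bounded on compact time intervals away from $t=0$), the chain rule applied to the integrands defining $b$ and $c$ shows that $b,c$ and the derivatives $b_t,b_x$ are locally bounded on $(0,\infty)\times S^1$. This is the only slightly technical step, but it is standard: all the quantities in question are compositions of $C^2$ functions of $g$ with functions that are at least $C^1$ in $t$ and $C^2$ in $x$ for positive times.

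With the linear equation and coefficient regularity in hand, I would simply invoke Lemma \ref{zero-number} applied to $w$: part (a) gives $z(w(t,\cdot))<\infty$ for $t>0$ together with non-increase in $t$; part (b) gives that strict drops of $z(w(t,\cdot))$ can only occur at times $t_0$ at which $w(t_0,\cdot)$ possesses a multiple zero in $S^1$; and part (c) gives that only finitely many such drops can occur and that $z(w(t,\cdot))$ is eventually constant. These are precisely assertions (a), (b) and (c) of the lemma. The main (minor) obstacle is just ensuring that the coefficient $b$ has enough regularity in $t$ and $x$ to fit into the hypotheses of Lemma \ref{zero-number}; the $C^3$-assumption on $f$ (and hence on every $g\in H(f)$), together with parabolic smoothing of $\varphi,\hat\varphi$, handles this cleanly.
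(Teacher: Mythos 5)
Your proof is correct and follows essentially the same route as the paper: write the difference $w=\varphi-\hat\varphi$ as a solution of a linear parabolic equation \eqref{linear-equation} via a Hadamard-type interpolation of the nonlinearity, check that the resulting coefficients have the $L^\infty_{\mathrm{loc}}$ regularity required by Lemma \ref{zero-number}, and then invoke Lemma \ref{zero-number}. The only cosmetic difference is that the paper obtains $b,c$ from a telescoping decomposition (varying the $p$-slot first with $u$ fixed, then the $u$-slot with $\hat u_x$ fixed), whereas you use a single straight-line homotopy in both slots simultaneously; both yield coefficients of the same form and regularity.
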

\begin{proof}
Denote $u(t,x)=\varphi(t,x;u,g)$ and $\hat{u}(t,x)=\varphi(t,x;\hat{u},g)$. Let $v(t,x)=u(t,x)-\hat{u}(t,x)$ be a nontrivial solution of the linear parabolic equation \eqref{linear-equation}, where
$$b(t,x)=\int_0^1\dfrac{\partial g}{\partial
p}(t,x,u(t,x),su_x(t,x)+(1-s)\hat{u}_x(t,x))ds,$$
$$c(t,x)=\int_0^1\dfrac{\partial g}{\partial
u}(t,x,su(t,x)+(1-s)\hat{u}(t,x),\hat{u}_x(t,x))ds.$$
 Then the results directly follows from Lemma \ref{zero-number}.
\end{proof}

 \begin{lemma}\label{sequence-limit}
 Fix $g,\ g_{0}\in H(f)$. Let $(u^{i},g)\in p^{-1}(g),(u_{0}^{i},g_{0})\in p^{-1}(g_{0}) (i=1,\ 2,\ u^{1}\neq u^{2},\ u_{0}^{1}\neq u_{0}^{2})$ be such that $\Pi^{t}(u^{i},g)$ is defined on $\mathbb{R}^{+}$ (resp. $\mathbb{R}^{-}$) and $\Pi^{t}(u_{0}^{i},g_{0})$ is defined on $\mathbb{R}$. If there exists a sequence $t_{n}\rightarrow +\infty$ (resp. $s_{n}\rightarrow -\infty$) as $n\rightarrow \infty$, such that $\Pi^{t_{n}}(u^{i},g)\rightarrow (u_{0}^{i},g_{0})$ (resp. $\Pi^{s_{n}}(u^{i},g)\rightarrow (u_{0}^{i},g_{0})$) as $n\rightarrow \infty (i=1,2)$, then
$$z(\varphi(t,\cdot;u_{0}^{1},g_{0})-\varphi(t,\cdot;u_{0}^{2},g_{0}))\equiv \textnormal{constant},$$
for all $t\in \mathbb{R}$.
\end{lemma}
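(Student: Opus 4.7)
The plan is to combine Lemma \ref{sigma-function} (applied both to the pair $\varphi(\cdot,\cdot;u^i,g)$ and to the pair $\varphi(\cdot,\cdot;u_0^i,g_0)$) with a continuity-plus-implicit-function-theorem argument. Write $\psi(t,\cdot):=\varphi(t,\cdot;u_0^1,g_0)-\varphi(t,\cdot;u_0^2,g_0)$ and, in the first case, $w_n(t,\cdot):=\varphi(t+t_n,\cdot;u^1,g)-\varphi(t+t_n,\cdot;u^2,g)$ (in the $s_n\to-\infty$ case replace $t_n$ by $s_n$, assuming $(u^i,g)$ has the required backward extension). By continuous dependence of the cocycle $\varphi$ on its initial data, together with the uniqueness of the backward extension on the ($\omega$- or $\alpha$-)limit set carrying $\psi$, one obtains $w_n(t,\cdot)\to\psi(t,\cdot)$ in $X\hookrightarrow C^1(S^1)$ for every fixed $t\in\mathbb{R}$.

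By Lemma \ref{sigma-function}(c) applied to $\psi$, there exist $T_0>0$ and $N_0\in\mathbb{N}$ such that $\psi(t,\cdot)$ has exactly $N_0$ simple zeros for every $t\geq T_0$, and by Lemma \ref{sigma-function}(a), $z(\psi(t,\cdot))$ is non-increasing on $\mathbb{R}$. Fix any $t_0\geq T_0$: since $\psi(t_0,\cdot)$ has only simple zeros, the implicit function theorem pairs each of them bijectively with a simple zero of $w_n(t_0,\cdot)$ for $n$ large (while $C^0$ convergence keeps $w_n(t_0,\cdot)$ bounded away from zero off small neighborhoods of these points), so $z(w_n(t_0,\cdot))=N_0$. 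Setting $\zeta(\tau):=z(\varphi(\tau,\cdot;u^1,g)-\varphi(\tau,\cdot;u^2,g))$, this says $\zeta(t_0+t_n)=N_0$ (resp.\ $\zeta(t_0+s_n)=N_0$) for $n$ large; combining with the non-increasingness of $\zeta$ yields $\zeta(\tau)\leq N_0$ for all $\tau$ in the relevant domain, hence $z(w_n(t,\cdot))\leq N_0$ for every fixed $t\in\mathbb{R}$ and all $n$ sufficiently large.

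Suppose now for contradiction that $z(\psi(\cdot,\cdot))\not\equiv N_0$. By monotonicity and the eventual value $N_0$ on $[T_0,\infty)$, there is an open interval $(a,b)\subset(-\infty,T_0)$ between consecutive drop times on which $z(\psi(t,\cdot))\equiv M>N_0$, and by Lemma \ref{sigma-function}(c) applied to $\psi$ one may pick $t^*\in(a,b)$ where $\psi(t^*,\cdot)$ has only simple zeros. The implicit function theorem at each of these $M$ simple zeros then produces $M$ distinct zeros of $w_n(t^*,\cdot)$ for $n$ large, so $z(w_n(t^*,\cdot))\geq M>N_0$, contradicting the uniform upper bound of the preceding paragraph. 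Hence $z(\psi(t,\cdot))\equiv N_0$ for all $t\in\mathbb{R}$.

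The main obstacle is that the zero number is only upper-semicontinuous under $C^1$ limits and can strictly increase in the limit at a touching (even-order) zero, so a direct comparison of $z(w_n)$ and $z(\psi)$ is not available at arbitrary times. The trick is to test the IFT only at times $t^*$ where $\psi(t^*,\cdot)$ is known to have only simple zeros (guaranteed by Lemma \ref{sigma-function}(c) applied to $\psi$); there the one-sided bound $z(w_n)\geq z(\psi)$ is clean, and combined with the uniform upper bound $z(w_n)\leq N_0$ derived from the forward stabilization along $t_n$ (or the backward approach along $s_n$), this closes the contradiction.
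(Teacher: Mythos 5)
Your proposal is correct and uses the same essential idea as the paper's proof: test the zero number of $\psi(t,\cdot)=\varphi(t,\cdot;u_0^1,g_0)-\varphi(t,\cdot;u_0^2,g_0)$ at a time $t_0$ where it has only simple zeros, use the $C^1$-convergence along $t_n$ (or $s_n$) to transfer that count to $\varphi(t_0+t_n,\cdot;u^1,g)-\varphi(t_0+t_n,\cdot;u^2,g)$, and pin it to the eventual stabilized value via Lemma \ref{sigma-function}(a)--(c). The only cosmetic differences are that the paper establishes the eventual value $N_1$ from the pair $(u^i,g)$ and then directly concludes $z(\psi(t_0,\cdot))=N_1$ for every $t_0$ with simple zeros, whereas you first fix the eventual value from $\psi$ itself and then run a short contradiction; both are the same argument.
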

\begin{proof}
We only prove the case of $t_n\to +\infty$. The case of $s_n\to -\infty$ is similar.
By virtue of Lemma \ref{sigma-function}(c),  there exist $T>0$ and $N_{1}\in\mathbb{N}$ such that
$z(\varphi(t,\cdot;u^{1},g)-\varphi(t,\cdot;u^{2},g))= N_{1},$
for all $t\geq T$. Fix $t_{0}\in \mathbb{R}$ so that $\varphi(t_0,\cdot;u_{0}^{1},g_{0})-\varphi(t_0,\cdot;u_{0}^{2},g_{0})$ only has simple zeros. Note that with $\Pi^{t_{n}}(u^{i},g)\rightarrow (u_{0}^{i},g_{0})(i=1,2)$ and the continuity of $z(\cdot)$, one has $z(\varphi(t_{n}+t_{0},\cdot;u^{1},g)-\varphi(t_{n}+t_{0},\cdot;u^{2},g))=z(\varphi(t_{0},\cdot;u_{0}^{1},g_{0})-\varphi(t_{0},\cdot;u_{0}^{2},g_{0}))$ for all $n$ sufficiently large.
Consequently, $z(\varphi(t_{0},\cdot;u_{0}^{1},g_{0})-\varphi(t_{0},\cdot;u_{0}^{2},g_{0}))=N_{1}$.
By Lemma \ref{sigma-function}(b)-(c) and the arbitrariness of $t_{0}$, we have $z(\varphi(t,\cdot;u_{0}^{1},g_{0})-\varphi(t,\cdot;u_{0}^{2},g_{0}))=N_{1}$ for all $t\in \mathbb{R}$.
\end{proof}

\subsection{Floquet bundles and invariant subspaces for linear  parabolic equations on $S^1$}

Consider the following linear parabolic equation:
\begin{equation}\label{linear-equation2}
\psi_t=\psi_{xx}+a(x,\omega\cdot t)\psi_x+b(x,\omega\cdot t)\psi,\,\,t>0,\,x\in S^{1}=\mathbb{R}/2\pi \mathbb{Z},
\end{equation}
where $\omega\in \Omega$, $\omega\cdot t$ is a flow on a compact  metric space $\Omega$; and $a^\omega(t,x):=a(x,\omega\cdot t)$,  $b^\omega(t,x):=b(x,\omega\cdot t)$ are continuously differentiable in $(t,x)$; and $a^\omega,a^\omega_t,a^\omega_x,b^\omega:\mathbb{R}\times S^1\to \mathbb{R}$ are bounded functions uniformly for $\omega\in \Omega$.

For any $w\in L^2(S^1)$, let $\psi(t,x;w, \omega)$ be the solution  of \eqref{linear-equation2} with $\psi(0,x;w,\omega)=w(x),x\in S^1$; and we write
 $$\Psi(t,\omega):L^2(S^1)\rightarrow L^2(S^1);w(\cdot)\mapsto \psi(t,\cdot;w, \omega)$$ as the evolution operator generated by  \eqref{linear-equation2}. It is known that, for each $t>0$ and $\omega\in \Omega$, $\Psi(t,\omega)$ is injective (see \cite[Proposition 2.5.8]{Mierczynski} or \cite[Chapter 6]{Friedman}); and by standard a priori estimates (see, e.g. \cite[Section 2.5]{Mierczynski}), $\Psi(t,\omega)$ is a compact (completely continuous) operator. Moreover, we have the following Lemma:

\begin{lemma}\label{floquet-bundle}
\begin{itemize}
\item [\rm{(i)}] There is a sequence $\{w_n,\tilde{w}_n\}_{n=1}^\infty\cup \{w_0\}$,  $w_0,w_n,\tilde {w}_n:S^1\times \Omega\rightarrow \mathbb{R}$, such that $w_0(\cdot,\omega),w_n(\cdot,\omega),\tilde {w}_n(\cdot,\omega)\in C^{1,\gamma}(S^1)$ for any $\gamma$ with $0\leq \gamma <1$, and $\|w_0(\cdot,\omega)\|_{L^2(S^1)}=\|w_n(\cdot,\omega)\|_{L^2(S^1)}=\|\tilde{w}_n(\cdot,\omega)\|_{L^2(S^1)}=1$ for any $\omega \in \Omega$. $\{w_n(\cdot,\omega),\tilde{w}_n(\cdot,\omega)\}_{n=1}^\infty\cup \{w_0(\cdot,\omega)\}$ forms a Floquet basis of $L^2(S^1)$, $z(w_0(\cdot,\omega))=0$ and $z(w_n(\cdot,\omega))=2n$ ($z(\tilde{w}_n(\cdot,\omega))=2n$) for all $\omega\in \Omega$.
\item [\rm{(ii)}]
     Let $W_0(\omega)={\rm span} \{w_0(\cdot,\omega)\}$ and $W_n(\omega)={\rm span}\{w_n(\cdot,\omega),\tilde{w}_n(\cdot,\omega)\}$, $n=1,2,\cdots$.
    Then $\Psi(t,\omega)W_{n}(\omega)=W_{n}(\omega\cdot t)$ for $t\geq 0$; and hence, $\Psi(t,\omega)|_{W_{n}(\omega)}$ is a linear isomorphism. Moreover,
     \begin{eqnarray*}
     {\bigoplus}_{i=n_1}^{n_2}W_i(\omega)&=\{w\in L^2(S^1):\psi(t,\cdot;w,\omega)\ \text{is exponentially bounded in } L^2(S^1),\\ &\ and \ 2n_1\leq z(\psi(t,\cdot;w,\omega))\leq 2n_2\ \text{for all }t\in \mathbb{R}\} \cup\{0\}
     \end{eqnarray*}
      for any $n_1$, $n_2$ with $0\leq n_1\leq n_2<\infty$.
\item[\rm{(iii)}] Let $w=c_0^0w_0(x,\omega)+{\Sigma}_{n=1}^{\infty}(c_n^0w_n(x,\omega)+\tilde{c}_n^0\tilde{w}_n(x,\omega))$ in $L^2(S^1)$. Then
    \begin{equation}\label{Floquet-rep}
    \psi(t,x,w,\omega)=c_0(t)w_0(x,\omega\cdot t)+{\Sigma}_{n=1}^{\infty}(c_n(t)w_n(x,\omega\cdot t)+\tilde{c}_n(t)\tilde{w}_n(x,\omega\cdot t)),
    \end{equation}
    where $c_0(t)$, $c_n(t)$, $\tilde{c}_n(t)$ are continuous functions with $c_0(0)=c^0_0$, $c_n(0)=c^0_n$, $\tilde{c}_n(0)=\tilde{c}^0_n$.
\item[\rm{(iv)}] Define $\Lambda(\cdot):\Omega\rightarrow L(L^2(S^1),l^2)$ by $\Lambda(\omega)w_0=\{c_0^0\}\cup\{c_n^0,\tilde{c}_n^0 \}_{n=1}^\infty$, where $w_0=c_0^0w_0(x,\omega)+{\Sigma}_{n=1}^{\infty}(c_n^0w_n(x,\omega)+\tilde{c}_n^0\tilde{w}_n(x,\omega))$. Then $\Lambda(\omega\cdot t)\psi(t,x,w_0,\omega)=\{c_0(t)\}\cup\{c_n(t),\tilde{c}_n(t) \}_{n=1}^\infty$. Moreover, $\Lambda$ is continuous, $\Lambda(\omega)$ is an isomorphism for each $\omega\in \Omega$, and there are positive constants $K_1$, $K_2$ which are independent of $\omega$ such that
    \begin{equation*}
      \|\Lambda(\omega)\|\leq K_1\ \ and \quad \|\Lambda^{-1}(\omega)\|\leq K_2.
    \end{equation*}
\item[\rm{(v)}] {\rm (Exponential Separation)} Let $Z_k^+(\omega)={\bigoplus}_{i=0}^{k}W_i(\omega)$ and $Z_k^-(\omega)=\mathrm{cl}\{\cup_{l\geq k+1}\bigoplus_{i=k+1}^{l}W_i(\omega)\}$ in $L^2(S^1)$. Then, $\Psi(t,\omega)Z_k^+(\omega)=Z_k^+(\omega)$ and  $\Psi(t,\omega)Z_k^-(\omega)\subset Z_k^-(\omega)$
    for $t\ge 0$, with $\Psi(t,\omega)Z_k^-(\omega)$ being dense in $Z_k^-(\omega)$ in $L^2$-norm. Moreover, there are constants $K>0,\mu>0$ which are independent of $\omega$, such that
    if $\psi(t,\cdot;v_0,\omega)\in Z_k^+(\omega\cdot t)$ and $\psi(t,\cdot;w_0,\omega)\in Z_k^-(\omega\cdot t)$ are nontrivial solutions for \eqref{linear-equation2}, then

   \begin{equation}\label{E:Es-proerty-L2}
    \dfrac{\norm{\psi(t,\cdot;w_0,\omega)}_{L^2}}{\norm{\psi(t,\cdot;v_0,\omega)}_{L^2}}
    \le Ke^{-\mu (t-s)}\dfrac{\norm{\psi(s,\cdot;w_0,\omega)}_{L^2}}{\norm{\psi(s,\cdot;v_0,\omega)}_{L^2}},
    \end{equation}
    for all $t\ge s,w_0\in Z_k^-(\omega),v_0\in Z_k^+(\omega)\setminus\{0\}$.
\end{itemize}
\end{lemma}
\begin{proof}
See \cite [Section 9]{Chow1995}.
\end{proof}

Throughout this subsection, $X$ is as in the introduction, that is,  $X$ is the fractional power space associated with the operator $u\rightarrow
-u_{xx}:H^{2}(S^{1})\rightarrow L^{2}(S^{1})$ such that the embedding
relations  $H^{2}(S^{1})\hookrightarrow X\hookrightarrow C^{1}(S^{1})\hookrightarrow L^2(S^1)$ are satisfied (these embeddings are also compact). Hereafter, $\|\cdot\|$ always denotes the norm in $X$.

Note that $\Psi(t,\omega)|_{X}\in \mathcal{L}(X)$. Moreover, for any $t>0$ and $\omega\in \Omega$, $\Psi(t,\omega)|_X$ is injective (see \cite[Proposition 2.5.8]{Mierczynski}), and by standard a priori estimates, $\Psi(t,\omega)|_X$ is a compact operator from $X$ to $X$. In the following, if no confusion occurs, we  may write $\Psi(t,\omega)|_X$ as $\Psi(t,\omega)$.

We put $\Pi^t:X\times \Omega\to X\times \Omega$ as
$$\Pi^t(v,\omega)=(\Psi(t,\omega)v,\omega\cdot t)\,\text{ for } t\ge 0.$$ Then the family of  maps $\{\Pi^t\}_{t\ge 0}$ so defined is indeed a linear skew-product semiflow on $X\times\Omega$ (see, e.g. \cite[p.57]{Mierczynski}).

Now we recall the conception of exponential dichotomy (ED) and Sacker-Sell spectrum.
Let $\lambda\in \mathbb{R}$ and define $\Pi^t_{\lambda}: X\times\Omega\rightarrow X\times\Omega$ by
\begin{equation}\label{exponential-dicho}
  \Pi^t_{\lambda}(v,\omega)=(\Psi_{\lambda}(t,\omega)v,\omega\cdot t),
\end{equation}
where $\Psi_{\lambda}(t,\omega)=e^{-\lambda t}\Psi(t,\omega)$. It is easy to verify that $\Pi^t_{\lambda}$ is also a linear skew-product semiflow on $X\times\Omega$. We say $\Psi_{\lambda}$ admits an {\it exponential dichotomy over} $\Omega$ if there exist $K>0$, $\alpha>0$ and continuous projections $P(\omega):X\rightarrow X$ such that for all $\omega \in \Omega$, $\Psi_{\lambda}(t,\omega)|_{R(P(\omega))}:R(P(\omega))\rightarrow R(P(\omega\cdot t))$ is an isomorphism satisfying $\Psi_{\lambda}(t,\omega)P(\omega)=P(\omega \cdot t)\Psi_{\lambda}(t,\omega)$, $t\in \mathbb{R}^+$; moreover,
\begin{equation*}
\begin{split}
\|\Psi_{\lambda}(t,\omega)(I-P(\omega))\|\leq Ke^{-\alpha t},\quad t\geq 0,\\
\|\Psi_{\lambda}(t,\omega)P(\omega)\|\leq Ke^{\alpha t},\quad t\leq 0.
\end{split}
\end{equation*}
Here $R(P(\omega))$ is the range of $P(\omega)$. We call
$$
\sigma(\Omega)=\{\lambda\in \mathbb{R}:\Pi^t_{\lambda}\ \text{has no exponential dichotomy over }\Omega\}
$$
the {\it Sacker-Sell spectrum} of \eqref{linear-equation2}. If $\Omega$ is  {\it compact and connected}, then the Sacker-Sell spectrum $\sigma(\Omega)=\bigcup_{k=0}^\infty I_k$, where $I_k=[a_k,b_k]$ and $\{I_k\}$ is ordered from right to left, that is, $\cdots<a_k\leq b_k<a_{k-1}\leq b_{k-1}<\cdots<a_0\leq b_0$ (see \cite{Chow1994,Sacker1978,Sacker1991}).\par
For any given $0\leq n_1\leq n_2\leq\infty$, if $n_2\neq \infty$, let
\begin{equation}\label{twoside-estimate}
\begin{split}
V^{n_1,n_2}(\omega)=\{v\in X:&\|\Psi(t,\omega)v\|=o(e^{a^-t})\ \text{as}\ t\rightarrow -\infty\\ & \|\Psi(t,\omega)v\|=o(e^{b^+t})\ \text{as}\ t\rightarrow \infty\}
\end{split}
\end{equation}
where $a^-$, $b^+$ are such that $\lambda_1<a^-<a_{n_2}\leq b_{n_1}<b^+<\lambda_2$ for any $\lambda_1\in \cup_{k=n_2+1}^{\infty}I_k$ and $\lambda_2\in\cup_{k=0}^{n_1-1}I_k$. If $n_2=\infty$, let
\begin{equation}\label{infty-estimate}
V^{n_1,\infty}(\omega)=\{v\in X :\|\Psi(t,\omega)v\|=o(e^{b^+t})\text{ as }t\to\infty\}
\end{equation}
where $b^+$ is such that $b_{n_1}<b^+<\lambda$ for any $\lambda\in \cup_{k=0}^{n_1-1}I_k$.
\par

\begin{remark}\label{invariant-space}
{\rm (i) Since the solution operators $\Psi(t,\omega)$ are compact (completely continuous) for $t>0$, it follows from the similar deduction in \cite[Theroem B(4)]{Sacker1991} (or see \cite[Lemma 3.1(1)]{Chow1994}) that $\dim V^{n_1,n_2}(\omega)<\infty$ for any fixed $0\leq n_1\leq n_2<\infty$.

(ii) $V^{n_1,n_2}(\omega)$ is {\it invariant} in the sense that, for $t\geq 0$, $\Psi(t,\omega)V^{n_1,n_2}(\omega)=V^{n_1,n_2}(\omega\cdot t)$ when $n_2<\infty$, while  $V^{n_1,\infty}(\omega)$ satisfies $\Psi(t,\omega)V^{n_1,\infty}(\omega)\subset V^{n_1,\infty}(\omega\cdot t)$ with $\Psi(t,\omega)V^{n_1,\infty}(\omega)$ being dense in $V^{n_1,\infty}(\omega\cdot t)$ in the $X$-norm. In fact, due to the adjoint operator $\Psi (t,\omega)^*$ of the adjoint equation is injective (see, e.g.  \cite[p.62-63]{Mierczynski}), such density can be directly obtained by the Hahn-Banach Theorem (cf. \cite[Corollary 4.12(b)]{Rudin}).}
\end{remark}

Suppose that $0\in \sigma(\Omega)$ and $n_0$ is such that $0\in I_{n_0}\subset\sigma(\Omega)$. Then $V^s(\omega)=V^{n_0+1,\infty}(\omega)$, $V^{cs}(\omega)=V^{n_0,\infty}(\omega)$, $V^{c}(\omega)=V^{n_0,n_0}(\omega)$, $V^{cu}(\omega)=V^{0,n_0}(\omega)$, and $V^u(\omega)=V^{0,n_0-1}(\omega)$ (If $n_0=0$, we set $V^u(\omega)=\{0\}$) are referred to as {\it stable, center stable, center, center unstable}, and {\it unstable subspaces} of \eqref{linear-equation2} at $\omega\in \Omega$, respectively.

Suppose that $0\not\in \sigma(\Omega)$ and $n_0$ is such that $I_{n_0}\subset (0,\infty)$ and $I_{n_0+1}\subset (-\infty,0)$.
$V^s(\omega)=V^{n_0+1,\infty}(\omega)$ and $V^u(\omega)=V^{0,n_0}(\omega)$ are referred to as {\it stable} and {\it unstable subspaces} of \eqref{linear-equation2} at $\omega\in \Omega$, respectively.

Fix any $\omega\in \Omega$, we consider \eqref{linear-equation2}.
By means of the transformation introduced in \cite[p.247-248]{Chow1995}, we let
\begin{equation}\label{E:transform-congr-1}
\hat\psi(t,x):=r^{\omega}(t,x+c^{\omega}(t))\cdot\psi(t,x+c^{\omega}(t)),
 \end{equation}
 where the function $c^{\omega}(t)$ satisfies
$$\dot c^{\omega}(t)=-a_0^{\omega}(t),\,\,\text{ with }\,\, a_0^{\omega}(t)=\frac{1}{2\pi}\int_0^{2\pi}a^{\omega}(t,z)dz,$$ and
$$
 r^{\omega}(t,x)=\exp\Big(\frac{1}{2}\int_0^ x (a^{\omega}(t,z)-a_0^{\omega}(t))dz\Big).
$$
Then the equation \eqref{linear-equation2} can be changed into
 \begin{equation}\label{no-gradient}
   \hat \psi_t=\hat \psi_{xx}+\hat b^{\omega}(t,x)\hat \psi,
 \end{equation}
where
$$
\hat b^{\omega}(t,x-c^{\omega}(t))=b^{\omega}(t,x)+\frac{1}{2}\int_0^{x}[a^{\omega}(t,z)-a_0^{\omega}(t)]_tdz
-\frac{1}{4}(a^{\omega}(t,x)^2-a_0^{\omega}(t)^2)-\frac{1}{2}a^{\omega}_x(t,x).
$$
Recall that the functions $a^\omega,a^\omega_t,a^\omega_x,b^\omega$ are bounded functions uniformly for $\omega\in \Omega$. Then $\hat b^{\omega}$ is a bounded function uniformly for $\omega\in \Omega$.

Hereafter, we write $\hat\sigma(\Omega)=\bigcup_{k=0}^\infty \hat I_k$ as the associated Sacker-Sell spectrum of \eqref{no-gradient} on $\Omega$, with $\hat{I}_k=[\hat{a}_k,\hat{b}_k]$ satisfying $\cdots<\hat{a}_k\leq \hat{b}_k<\hat{a}_{k-1}\leq \hat{b}_{k-1}<\cdots<\hat{a}_0\leq \hat{b}_0$. Let also $\{\hat w_0(\cdot,\omega)\}\cup\{\hat w_n(\cdot,\omega),\tilde{\hat w}_n(\cdot,\omega)\}_{n=1}^\infty$ be the Floquet basis of \eqref{no-gradient} as stated in Lemma \ref{floquet-bundle}. We further write $\hat W_0(\omega)=\mathrm{span}\{\hat w_0(\cdot,\omega)\}$, $\hat W_n(\omega)=\mathrm{span}\{\hat w_n(\cdot,\omega),\tilde{\hat w}_n(\cdot,\omega)\}$ for $n=1,2,\cdots.$
\begin{remark}\label{no-chang-spetr}
{\rm
  Since $0<\inf_{(t,x)\in\mathbb{R}\times S^1, \omega\in \Omega} |r^{\omega}(t,x)|\le \sup _{(t,x)\in\mathbb{R}\times S^1, \omega\in \Omega} |r^{\omega}(t,x)|<\infty$, it is not difficult to see that the Sacker-Sell spectrum $\sigma(\Omega)$ of \eqref{linear-equation2} is equal to the Sacker-Sell spectrum $\hat\sigma(\Omega)$ of \eqref{no-gradient}, i.e., $\hat\sigma(\Omega)=\sigma(\Omega)$. Moreover, we have  $W_n(\omega)=\{v:v(x)=(r^{\omega}(0,x))^{-1}\hat v(x-c^{\omega}(0)),\ \hat v\in \hat W_n(\omega)\}$ for $n=0,1,2,\cdots$; and  $V^{n_1,n_2}(\omega)=\{v:v(x)=(r^{\omega}(0,x))^{-1}\hat v(x-c^{\omega}(0)), \ \hat v\in \hat V^{n_1,n_2}(\omega)\}$, where $\hat V^{n_1,n_2}(\omega)$ are associated invariant subspaces of \eqref{no-gradient} at $\omega\in\Omega$.
  }
\end{remark}

Before ending this section, we present a lemma which will be useful in the forthcoming sections.

\begin{lemma}\label{zero-inva}
For given $0\leq n_1\leq n_2\leq\infty$ (when $n_2=\infty$, $n_1<\infty$ is needed), we have
\begin{equation*}
N_1\leq z(v(\cdot))\leq N_2,\,\,\text{ for any }v\in V^{n_1,n_2}(\omega)\setminus\{0\},
\end{equation*}
where
\begin{equation*}
N_1=\left\{
\begin{split}
 &{\rm dim}V^{0,n_1-1}(\omega),\,\quad\,\,\,\text{ if }{\rm dim}V^{0,n_1-1}(\omega)\text{ is even;}\\
 &{\rm dim}V^{0,n_1-1}(\omega)+1,\,\text{ if }{\rm dim}V^{0,n_1-1}(\omega)\text{ is odd,}
\end{split}\right.
\end{equation*} and
\begin{equation*}
N_2=\left\{
\begin{split}
 &{\rm dim}V^{0,n_2}(\omega),\,\quad\,\,\,\text{ if }{\rm dim}V^{0,n_2}(\omega)\text{ is even;}\\
 &{\rm dim}V^{0,n_2}(\omega)-1,\,\text{ if }{\rm dim}V^{0,n_2}(\omega)\text{ is odd.}
\end{split}\right.
\end{equation*}
Here, we define $V^{0,-1}(\omega)=\{0\}.$
\end{lemma}

In order to prove Lemma \ref{zero-inva}, we first need the following two lemmas (Lemmas \ref{floquet-invar} and
\ref{floquet-zeron}). Based on these two lemmas, we will then give Lemma \ref{zero-inva0}, which can be viewed as a lemma parallel to Lemma \ref{zero-inva}. Together with Remark \ref{no-chang-spetr}, one can easily see that Lemma \ref{zero-inva0} implies Lemma \ref{zero-inva} immediately.

\begin{lemma}\label{floquet-invar}
      Let $\hat V^{0,n_1}(\omega)$ be the associated invariant subspace with $\bigcup_{k=0}^{n_1}\hat I_k$.

    {\rm (i)} If $\hat W_{n}(\omega)\cap \hat V^{n_1,\infty}(\omega)\ne \{0\}$ for some $n,n_1\ge 0$, then $\hat W_k(\omega)\subset \hat V^{n_1,\infty}(\omega)$ whenever $k>n$;

    {\rm (ii)} If $\hat W_{n}(\omega)\cap \hat V^{0,n_1}(\omega)\ne \{0\}$ for some $n,n_1\ge 0$, then $\hat W_k(\omega)\subset \hat V^{0,n_1}(\omega)$ whenever $k<n$.
\end{lemma}

\begin{proof} We only give the proof of (i), because that of (ii) is similar.
  Choose any $w\in (\hat W_{n}(\omega)\cap \hat V^{n_1,\infty}(\omega))\setminus\{0\} $ and let $\hat \psi(t,\cdot,w,\omega)$ be a solution of \eqref{no-gradient} with $\hat\psi(0,\cdot,w,\omega)=w$. For any $v\in \hat W_k(\omega)\setminus \{0\}$ (with $k>n$), let $\hat \psi(t,\cdot,v,\omega)$ be the solution of \eqref{no-gradient} with $\hat\psi(0,\cdot,v,\omega)=v$. Then by Lemma \ref{floquet-bundle}(v), there exist positive quantities $\hat K$ and $\nu$ such that \begin{equation}\label{L:ex-exp11}
   \|\hat \psi(t,\cdot;v,\omega)\|_{L^2}\leq \hat K e^{-\nu t}\|\hat \psi(t,\cdot;w,\omega)\|_{L^2}\frac{\|v\|_{L^2}}{\|w\|_{L^2}}\,\text{ for any } t\ge 0.
 \end{equation}

 Recall that $w\in\hat V^{n_1,\infty}(\omega)$. Then, for any $b^+\in (\hat{b}_{n_1}, \hat{a}_{n_1-1})$, one has $e^{-b^+t}\|\hat \psi(t,\cdot;w,\omega)\|\to 0$ as $t\to \infty$. Hence,
  $\|e^{-b^+t}\hat \psi(t,\cdot;w,\omega)\|_{L^2}\to 0$ as $t\to \infty$. By virtue of \eqref{L:ex-exp11}, we have
\begin{equation}\label{asym-zeo}
\|e^{-b^+t}\hat \psi(t,\cdot;v,\omega)\|_{L^2}\to 0.
\end{equation}
Hereafter, we write $\tilde\psi(t,x)=e^{-b^+t}\hat \psi(t,x;v,\omega)$ for brevity. Clearly, $\tilde\psi(t,x)$ satisfies the equation
\begin{equation}\label{E:renormalized-eqns}
  \tilde\psi_t=\tilde\psi_{xx}+\tilde b^{\omega}(t,x)\tilde\psi,
\end{equation}
 with $\tilde b^{\omega}(t,x)=\hat b^{\omega}(t,x)-b^+$. By the standard regularity estimate for \eqref{E:renormalized-eqns} (see \cite[Lemma 3.2 and Theorem 3.3]{Chow1995}, and the statement in the last paragraph on \cite[p.296]{Chow1995}), we have
  \begin{equation}\label{E:regularity-es}
  \|\tilde\psi(t,\cdot)\|\le [\sup_{n\geq 1} n^{\alpha}e^{-n^2}+1]\cdot\|\tilde\psi(\tau,\cdot)\|_{L^2}
  + K(\alpha)(1+t-\tau)\sup_{s\in[\tau,t]} \|\tilde\psi(s,\cdot)\|_{L^2},
\end{equation}
whenever $t\ge \tau+1\ge \tau\ge 0$. Here $K(\alpha)$ is a constant only depending on $\alpha$.

Then for any $b\in (b^+,\hat{a}_{n_1-1})$, the estimates \eqref{E:regularity-es} implies that
\begin{equation}
e^{-(b-b^+)t}\norm{\tilde\psi(t,\cdot)}\le e^{-(b-b^+)t}K_1(\alpha)(1+t-\tau)\cdot\sup_{s\in[\tau,t]} \|\tilde\psi(s,\cdot)\|_{L^2},
\end{equation}
whenever $t\ge \tau+1\ge \tau\ge 0$. Here $K_1(\alpha)$ is a constant only depending on $\alpha$. Together with
\eqref{asym-zeo}, this implies that $e^{-(b-b^+)t}\norm{\tilde\psi(t,\cdot)}\to 0$ as $t\to \infty$, that is, $$e^{-bt}\norm{\hat \psi(t,\cdot;v,\omega)}\to 0,\quad\text{ as }n\to \infty.$$ By the arbitrariness of $b$ and $b^+$, we have obtained that $v\in \hat V^{n_1,\infty}(\omega)$, which completes the proof.
\end{proof}

\begin{lemma}\label{floquet-zeron}
 Let $\hat V^{0,n_1}(\omega)$ be in Lemma \ref{floquet-invar} with $\dim \hat V^{0,n_1}(\omega)=N_1>0$. Then:
 \begin{itemize}
   \item [\rm{(i)}] If $N_1$ is odd, let $N'_1=\frac{N_1-1}{2}$, then
  \begin{equation}\label{E:V-0-n1-charac}
    \hat V^{0,n_1}(\omega)=\bigoplus_{k=0}^{N'_1}\hat W_k(\omega)
  \end{equation}
  and
 \begin{equation}\label{E:V-n1+1-charac}
  \hat V^{n_1+1,\infty}(\omega)=\mathrm{cl}\{\cup_{m\geq  N'_1+1}\bigoplus_{k=N'_1+1}^m\hat W_k(\omega)\},
  \end{equation}
 for any $\omega\in \Omega$. Here the closure is taken in the $X$-norm.
  \item [\rm{(ii)}] If $N_1$ is even, let $N'_1=\frac{N_1}{2}$, then  for any $\omega\in \Omega$,
\begin{equation}\label{E:V-0-n1-charac-even}
  \hat V^{0,n_1}(\omega)=(\bigoplus_{k=0}^{ N^\prime_1-1}\hat W_k(\omega))\oplus V'(\omega)
\end{equation}
  and
\begin{equation}\label{E:V-n1+1-charac-even}
  \hat V^{n_1+1,\infty}(\omega)=\mathrm{cl}\{\cup_{m\geq N'_1+1}\bigoplus_{k=N'_1+1}^m\hat W_k(\omega)\oplus V''(\omega)\},
  \end{equation}
  where $V'(\omega)$, $V''(\omega)$ are 1-dimensional subspaces satisfying $V'(\omega)\bigoplus V''(\omega)=\hat W_{N'_1}(\omega)$. Here the closure is taken in the $X$-norm.
 \end{itemize}
 \end{lemma}

 \begin{proof}
Take some $w\in \hat V^{0,n_1}(\omega)\setminus\{0\}$ and let $w=c^0_0\hat w_0(\cdot,\omega)+{\sum}_{n=1}^{\infty}(c^0_n\hat w_n(\cdot,\omega)+\tilde{c}^0_n\tilde{\hat w}_n(\cdot,\omega))$ in $L^2(S^1)$ for some $\{c^0_0,c^0_n,\tilde c^0_n\}\in l^2$. Since $w\neq 0$, there exists some $n_0\in \mathbb{N}\cup \{0\}$ such that $c^0_{n_0}\neq 0$ or $\tilde c^0_{n_0}\neq 0$. Without loss of generality, one may assume that $c^0_{n_0}\neq 0$. {\it We claim that} $\hat w_{n_0}(\cdot,\omega)\in \hat V^{0,n_1}(\omega)$. In fact, by Lemma \ref{floquet-bundle}(iii), $\hat\psi(t,x;w,\omega)=c_0(t)\hat w_0(x,\omega\cdot t)+\sum_{n=1}^{\infty}(c_n(t)\hat w_n(x,\omega\cdot t)+\tilde{c}_n(t)\tilde{\hat w}_n(x,\omega\cdot t))$; and moreover, Lemma \ref{floquet-bundle}(iv) implies that
\begin{equation}\label{coefficient-control}
  |c_{n_0}(t)|\leq\{c_0(t)^2+\sum_{n=1}^{\infty}(c_n(t)^2+\tilde c_n(t)^2)\}^{\frac{1}{2}}
  \leq \hat K_1\|\hat\psi(t,\cdot;w,\omega)\|_{L^2} \leq \hat K_1\|\hat\psi(t,\cdot;w,\omega)\|,
\end{equation}
where $\hat K_1$ is a constant. Since $w\in \hat V^{0,n_1}(\omega)$, one has  $\|e^{-a^-t}\hat \psi(t,\cdot;w,\omega)\|_{X^{\alpha}}\to 0$ ($t\to -\infty$), for any $a^-<\hat{a}_{n_1}:=\inf\{ {\cup_{k=0}^{n_1}}\hat I_k\}$. It then follows from \eqref{coefficient-control} that $e^{-a^-t}\abs{c_{n_0}(t)}\to 0$ as $t\to -\infty$; and hence,  $\|e^{-a^-t}\hat\psi(t,\cdot;\hat w_{n_0}(\cdot,\omega),\omega)\|_{L^2}\to 0$ as $t\to-\infty$.
By the same the argument {\bl as} in Lemma \ref{floquet-invar}, one has $\|e^{-at}\hat\psi(t,\cdot;\hat w_{n_0}(\cdot,\omega),\omega)\|_{X^{\alpha}}\to 0\, (t\to -\infty)$, for any $a<\hat{a}_{n_1}$. Thus, we have proved the claim that $\hat w_{n_0}(\cdot,\omega)\in \hat V^{0,n_1}(\omega)$. Moreover, Lemma \ref{floquet-invar}(ii) directly implies that
\begin{equation}\label{E:V-01-including}
\bigoplus_{i=0}^{n_0-1}\hat W_i(\omega)\oplus \mathrm{span}\{\hat w_{n_0}(\cdot,\omega)\}\subset \hat V^{0,n_1}(\omega).
 \end{equation}
 Here, we set $\bigoplus_{i=0}^{-1}\hat W_i(\omega)=\{0\}$ whenever $n_0=0$.

 In the following, we will consider the cases that $N_1$ is odd and even, respectively.

{\it Case {\rm (i)}: $N_1$ is odd.} If $N_1=1$, then it is clear from \eqref{E:V-01-including} that $n_0=0$ and
$\hat V^{0,n_1}(\omega)=\mathrm{span}\{\hat w_{0}(\cdot,\omega)\}.$ Thus, we are done. So, we only consider $N_1>1$ here.

 Noticing in \eqref{E:V-01-including} that ${\rm dim}\{\bigoplus_{i=0}^{n_0-1}\hat W_i(\omega)\oplus \mathrm{span}\{\hat w_{n_0}(\cdot,\omega)\}\}=2n_0,$ we have $2n_0<N_1$.
So, one may choose some $w\in \hat V^{0,n_1}(\omega)\setminus(\bigoplus_{i=0}^{n_0-1}\hat W_i(\omega)\oplus \mathrm{span}\{\hat w_{n_0}(\cdot,\omega)\})$ with (a) $\tilde c_{n_0}\neq 0$; or otherwise, (b) there is some $\hat{n}>n_0$ such that at least one of the coefficients $c_{\hat{n}},\tilde c_{\hat{n}}$ is nonzero. In the former case, one can use the same argument in the claim above to obtain that $\tilde{\hat w}_{n_0}(\cdot,\omega)\in \hat V^{0,n_1}(\omega)$. Therefore, $\bigoplus_{i=0}^{n_0}\hat W_i(\omega)\subset \hat V^{0,n_1}(\omega)$. So, $\hat V^{0,n_1}(\omega)=\bigoplus_{i=0}^{n_0}\hat W_i(\omega)$ when $2n_0+1=N_1$. If $2n_0+1<N_1$, it then falls into the latter case (b). For case (b), by repeating the same argument in the claim above, we have either $\hat w_{\hat{n}}(\cdot,\omega)\in \hat V^{0,n_1}(\omega)$, or $\tilde {\hat w}_{\hat{n}}(\cdot,\omega)\in \hat V^{0,n_1}(\omega)$. Since $\dim \hat V^{0,n_1}(\omega)$ is finite, by finite steps, one can finally find an $n'_0$ (in fact $n'_0=N'_1=\frac{N_1-1}{2}$) such that
\begin{equation}\label{E:V-0-n-prime}
\hat V^{0,n_1}(\omega)=\bigoplus_{i=0}^{n'_0}\hat W_i(\omega)=\bigoplus_{i=0}^{N'_1}\hat W_i(\omega),
\end{equation} which is exactly \eqref{E:V-0-n1-charac}.

 In order to prove \eqref{E:V-n1+1-charac}, we choose any $\omega\in \Omega$ and $v_0\in \hat V^{n_1+1,\infty}(\omega)\setminus \{0\}$, it is not difficult to see that \begin{equation}\label{E:v0-V-infty-L2}
v_0=\sum_{k=N'_1+1}^{\infty}(c^0_k\hat w_k(\cdot,\omega)+\tilde c^0_k\tilde {\hat w}_k(\cdot,\omega))
\quad\text{ in }L^2(S^1).
 \end{equation}
 (Otherwise, by a similar estimate as in \eqref{coefficient-control} and in the claim above, one can find some $n_*\leq N'_1$ such that $\hat w_{n_*}(\cdot, \omega)\in \hat V^{n_1+1,\infty}(\omega)$ or $\tilde {\hat w}_{n_*}(\cdot, \omega)\in \hat V^{n_1+1,\infty}(\omega)$. Together with \eqref{E:V-0-n-prime}, it then follows that $\hat V^{0,n_1}(\omega)\cap \hat V^{n_1+1,\infty}(\omega)\ne\{0\}$, a contradiction).

 Since $v_0\ne 0$, there is $n_0\geq N'_1+1$ such that $c^0_{n_0}\neq 0$ or $\tilde c^0_{n_0}\neq 0$. Similarly as the claim above again, one has $\hat w_{n_0}(\cdot,\omega)\in \hat V^{n_1+1,\infty}(\omega)$ or $\tilde {\hat w}_{n_0}(\cdot,\omega)\in \hat V^{n_1+1,\infty}(\omega)$. So, Lemma \ref{floquet-invar}(i) implies that $\hat W_k(\omega)\subset \hat V^{n_1+1,\infty}(\omega)$ for all $k\ge n_0+1$.
Moreover, we can even obtain that
 \begin{equation}\label{E:W-k-V-infty}
\hat W_k(\omega)\subset \hat V^{n_1+1,\infty}(\omega),\text{ for all }k\geq N'_1+1.
 \end{equation}
 As a matter of fact, suppose that \eqref{E:W-k-V-infty} does not hold. Then
there is at least some 1-dimensional subspace $V'(\omega)\subset \bigoplus_{k=N'_1+1}^{n_0}\hat W_k(\omega)(\subset X)$ such that $V'(\omega)\cap \hat V^{n_1+1,\infty}(\omega)=\{0\}$. We here assert that $V'(\omega)\subset \hat V^{0,n_1}(\omega)$. (Otherwise, one can find a nonzero $v_*=v_1+v_2\in V'(\omega)$ with $v_1\in \hat V^{0,n_1}(\omega)\setminus \{0\}$ and $v_2\in \hat V^{n_1+1,\infty}(\omega)\setminus \{0\}$.  So, by \eqref{E:V-0-n-prime}-\eqref{E:v0-V-infty-L2}, we obtain a contradiction to $V'(\omega)\subset \hat V^{0,n_1}(\omega)$). Based on the assertion, one has $V'(\omega)\subset \hat V^{0,n_1}(\omega)\cap \bigoplus_{k=N'_1+1}^{n_0}\hat W_k(\omega)$, contradicting
\eqref{E:V-0-n-prime}. Therefore, \eqref{E:W-k-V-infty} has been proved.

 In the following, we will show how \eqref{E:W-k-V-infty} implies \eqref{E:V-n1+1-charac}. On the one hand, it is clear from \eqref{E:W-k-V-infty} that $\mathrm{cl}\{\cup_{m\geq  N'_1+1}\bigoplus_{k=N'_1+1}^m\hat W_k(\omega)\}\subseteq\hat V^{n_1+1,\infty}(\omega)$. On the other hand, for any nonzero $v\in \hat V^{n_1+1,\infty}(\omega)$, by the density mentioned in Remark \ref{invariant-space}(ii), there is a sequence $v^j\in \hat V^{n_1+1,\infty}(\omega_{-1})$ such that $\|\hat\psi(1,\cdot;v^j,\omega_{-1})-v\|\to 0$ as $j\to \infty$. By applying \eqref{E:v0-V-infty-L2} to $v^j$, one has $\norm{v^j_m-v^j}_{L^2(S^1)}\to 0(m\to \infty)$, where $v^j_m=\sum_{k=N'_1+1}^{m}(c^{0,j}_k\hat w_k(\cdot,\omega_{-1})+\tilde c^{0,j}_k\tilde {\hat w}_k(\cdot,\omega_{-1}))$ for some $c^{0,j}_k,\tilde c^{0,j}_k$ with $k=N'_1+1,\cdots,m.$ It then follows from the basic regularity result (see \cite[Theorem 3.3 and p.296]{Chow1995}) that  $\norm{\hat\psi(1,\cdot;v_m^j,\omega_{-1})-\hat\psi(1,\cdot;v^j,\omega_{-1})}_{X}\to 0$. As a consequence, $\hat\psi(1,\cdot;v^j,\omega_{-1})\in \mathrm{cl}\{\cup_{m\geq  N'_1+1}\bigoplus_{k=N'_1+1}^m\hat W_k(\omega)\}$; and hence $v\in\mathrm{cl}\{\cup_{m\geq  N'_1+1}\bigoplus_{k=N'_1+1}^m\hat W_k(\omega)\}$. By the arbitrariness of $v$, we have obtained that $\hat V^{n_1+1,\infty}(\omega)\subseteq\mathrm{cl}\{\cup_{m\geq  N'_1+1}\bigoplus_{k=N'_1+1}^m\hat W_k(\omega)\}$, which completed the proof of \eqref{E:V-n1+1-charac}.\vskip 2mm

{\it Case {\rm (ii)}: $N_1$ is even}. By virtue of \eqref{E:V-01-including}, it is easy to see that, if $2n_0=N_1$, then $$\bigoplus_{i=0}^{n_0-1}\hat W_i(\omega) \oplus \mathrm{span}\{\hat w_{n_0}(\cdot,\omega)\}=\hat V^{0,n_1}(\omega).$$ Thus, by letting $N_1^\prime=n_0=\frac{N_1}{2}$, we have obtained \eqref{E:V-0-n1-charac-even} with $V'(\omega)=\mathrm{span}\{\hat w_{n_0}(\cdot,\omega)\}\subset \hat W_{n_0}(\omega)$.

If $2n_0<N_1$, then one can repeat the similar argument for the proof of \eqref{E:V-0-n-prime} (i.e.,
\eqref{E:V-0-n1-charac}) to find an  $n^*_0=\frac{N_1}{2}>n_0$ such that $\hat V^{0,n_1}(\omega)=\bigoplus_{i=0}^{n^*_0-1}\hat W_i(\omega)\oplus V'(\omega)$, where $V'(\omega)$ is 1-dimensional subspace of $\hat W_{n^*_0}(\omega)$. Thus, we have proved  \eqref{E:V-0-n1-charac-even}.

Finally, the proof of \eqref{E:V-n1+1-charac-even} is similar as the proof of \eqref{E:V-n1+1-charac}.
\end{proof}

\begin{lemma}\label{zero-inva0}
For given $0\leq n_1\leq n_2\leq\infty$ (when $n_2=\infty$, $n_1<\infty$ is needed), we have
\begin{equation}\label{E:esti-V-n1-n2-lap}
N_1\leq z(v(\cdot))\leq N_2,\,\,\text{ for any }v\in \hat V^{n_1,n_2}(\omega)\setminus\{0\},
\end{equation} where
\begin{equation*}
N_1=\left\{
\begin{split}
 &{\rm dim}\hat V^{0,n_1-1}(\omega),\,\quad\,\,\,\text{ if }{\rm dim}\hat V^{0,n_1-1}(\omega)\text{ is even;}\\
 &{\rm dim}\hat V^{0,n_1-1}(\omega)+1,\,\text{ if }{\rm dim}\hat V^{0,n_1-1}(\omega)\text{ is odd,}
\end{split}\right.
\end{equation*} and
\begin{equation*}
N_2=\left\{
\begin{split}
 &{\rm dim}\hat V^{0,n_2}(\omega),\,\quad\,\,\,\text{ if }{\rm dim}\hat V^{0,n_2}(\omega)\text{ is even;}\\
 &{\rm dim}\hat V^{0,n_2}(\omega)-1,\,\text{ if }{\rm dim}\hat V^{0,n_2}(\omega)\text{ is odd.}
\end{split}\right.
\end{equation*}
Here, we define $\hat V^{0,-1}(\omega)=\{0\}.$
\end{lemma}
\begin{proof}
Given any $n\in\mathbb{N}\cup\{0\}$ and $v\in \hat V^{0,n}(\omega)\setminus \{0\}$, it follows from Lemma \ref{floquet-bundle}(ii), \eqref{E:V-0-n1-charac} and \eqref{E:V-0-n1-charac-even} in Lemma \ref{floquet-zeron} that $0\leq z(v)\leq N$, where
\begin{equation*}
N=\left\{
\begin{split}
 &{\rm dim}\hat V^{0,n}(\omega),\,\quad\,\,\,\text{ if }{\rm dim}\hat V^{0,n}(\omega)\text{ is even;}\\
 &{\rm dim}\hat V^{0,n}(\omega)-1,\,\text{ if }{\rm dim}\hat V^{0,n}(\omega)\text{ is odd.}
\end{split}\right.
\end{equation*}

Given $w\in \hat V^{n+1,\infty}(\omega)\setminus\{0\}$, we will show that $z(w)\geq N+1$ whenever
$\dim \hat V^{0,n}(\omega)$ is odd. To this purpose, let $\hat\psi(t,\cdot;w,\omega)$ be the solution of \eqref{no-gradient} with $\hat\psi(0,\cdot;w,\omega)=w$.  Then, by Lemma \ref{zero-number}(c), there exist $T>0$ and $N_0\in\mathbb{N}\cup\{0\}$ such that $z(\hat\psi(T,\cdot;w,\omega))=N_0$ and $\hat\psi(T,\cdot;w,\omega)$ only has simple zeros.
Consequently, one can find a small $\delta>0$ such that $z(v)=N_0$ whenever $\|v-\hat\psi(T,\cdot;w,\omega)\|<\delta$. On the other hand,
noticing that $\hat\psi(T,\cdot;w,\omega)\in \hat V^{n+1,\infty}(\omega\cdot T)$,
it then follows from \eqref{E:V-n1+1-charac} in Lemma \ref{floquet-zeron}(i) that $\hat\psi(T,\cdot;w,\omega)\in \mathrm{cl}\{\cup_{m\geq \frac{N}{2}+1}\bigoplus_{k=\frac{N}{2}+1}^m\hat W_k(\omega\cdot T)\}$; and hence,
there exists a sequence $\{w^n\}\subset X$, with $w^n\in \bigoplus_{k=\frac{N}{2}+1}^{\frac{N}{2}+1+n}\hat W_k(\omega\cdot T)$, such that $\norm{w^n-\hat\psi(T,\cdot;w,\omega)}\to 0$ as $n\to\infty$. Therefore,
$z(w^n)=N_0$ for all $n$ sufficiently large.
Recall that $w^n\in \bigoplus_{k=\frac{N}{2}+1}^{\frac{N}{2}+1+n}\hat W_k(\omega\cdot T)$. Then Lemma \ref{floquet-bundle}(ii) implies that $N_0\geq N+1$. Together with Lemma \ref{zero-number}(a), we have $z(w)\ge N_0\geq N+1$. Thus, we have proved $z(w)\geq N+1$ whenever
$\dim \hat V^{0,n}(\omega)$ is odd.
Similarly, we can prove $z(w)\geq N$ whenever $\dim \hat V^{0,n}(\omega)$ is even, by
utilizing \eqref{E:V-n1+1-charac-even} in Lemma \ref{floquet-zeron}(ii).

Based on the argument in the previous paragraph, one can easily obtain \eqref{E:esti-V-n1-n2-lap}. As a matter of fact, due to the definition of $\hat V^{n_1,n_2}(\omega)$ in \eqref{twoside-estimate}, it is easy to see that $\hat V^{n_1,n_2}(\omega)=\hat V^{0,n_2}(\omega)\cap \hat V^{n_1,\infty}(\omega)$.
So, for any $v\in \hat V^{n_1,n_2}(\omega)$, on the one hand, $v\in \hat V^{n_1,\infty}(\omega)$ yields that $z(v)\geq N_1$. On the other hand, $v\in \hat V^{0,n_2}(\omega)$ implies $z(v)\leq N_2$. Thus, we have completed the proof.
\end{proof}

\subsection{Invariant manifolds  of nonlinear parabolic equations on $S^1$}

Consider
\begin{equation}\label{lipschitz}
v'=A(\omega \cdot t)v+F(v,\omega \cdot t),\quad t>0,\ \omega\in \Omega,\ v\in X
\end{equation}
where $A(\omega)v=v_{xx}+a(x,\omega)v_x+b(x,\omega)v$, and $\omega\cdot t$ is as in \eqref{linear-equation2}, $F(\cdot,\omega)\in C^1(X,X_0)$, $F(v,\cdot)\in C^0(\Omega,X_0)$ ($v\in X$), $F^\omega(t,v)=F(v,\omega\cdot t)$ is H\"older continuous in $t$, and $F(v,y)=o(\|v\|)$ ($X_0=L^2(S^1)$), here  $X$ is as in the previous subsection. It is well-known that the solution operator $\Phi_t(\cdot,\omega)$ of \eqref{lipschitz} exists in the usual sense, that is, for any $v\in X$, $\Phi_0(v,\omega)=v$, $\Phi_t(v,\omega)\in \mathcal{D}(A(\omega\cdot t))$; $\Phi_t(v,\omega)$ is differentiable in $t$ with respect to $X_0$ norm and satisfies \eqref{lipschitz} for $t>0$.\par

Suppose that $\sigma(\Omega)=\cup_{k=0}^{\infty}I_k$ is the spectrum of \eqref{linear-equation2}. The following lemma can be proved by using arguments as in \cite{Chow1991, Chow1994-2, Hen}
\begin{lemma}\label{invari-mani}
 There is a $\delta_0>0$ such that for any $0<\delta^*<\delta_0$ and $0\leq n_1\leq n_2\leq\infty$, \eqref{lipschitz} admits for each $\omega\in \Omega$ a local invariant manifold $W^{n_1,n_2}(\omega,\delta^*)$ with the following properties.
 \begin{itemize}
 \item [\rm{(i)}] There are $K_0>0$, and bounded continuous function $h^{n_1,n_2}(\omega): V^{n_1,n_2}(\omega)$ $\rightarrow V^{n_2+1,\infty}(\omega)\oplus V^{0,n_1-1}(\omega)) $ being $C^1$ for each fixed $\omega\in \Omega$, and $h^{n_1,n_2}(v,\omega)$ $=o(\|v\|)$ (uniformly in $\omega\in\Omega$), $\|(\partial h^{n_1,n_2}/\partial v)(v,\omega)\|\leq K_0$ for all $\omega\in \Omega$, $v\in V^{n_1,n_2}(\omega)$ such that
 \begin{eqnarray*}
  W^{n_1,n_2}(\omega,\delta^*)=\left\{ v_0^{n_1,n_2}+h^{n_1,n_2}(v_0^{n_1,n_2},\omega):v_0^{n_1,n_2}\in V^{n_1,n_2}(\omega) \cap \{v\in X: \|v\|<\delta^*\}\right\}.
 \end{eqnarray*}
 Moreover, $W^{n_1,n_2}(\omega,\delta^*)$ are diffeomorphic to $V^{n_1,n_2}(\omega)\cap\{v\in X| \|v\|<\delta^*\}$, and $W^{n_1,n_2}(\omega,\delta^*)$ are tangent to $V^{n_1,n_2}(\omega)$ at $0\in X$ for each $\omega\in \Omega$.
 \item [\rm{(ii)}] $W^{n_1,n_2}(\omega,\delta^*)$ is locally invariant in the sense that if $v\in W^{n_1,n_2}(\omega,\delta^*)$ and $\norm{\Phi_t(v,\omega)}<\delta^*$ for all $t\in [0,T]$, then $\Phi_t(v,\omega)\in W^{n_1,n_2}(\omega\cdot t,\delta^*)$ for all $t\in [0,T]$.

 \end{itemize}
\end{lemma}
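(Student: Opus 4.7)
The plan is to construct $W^{n_1,n_2}(\omega,\delta^*)$ via the Lyapunov--Perron method, adapted to the skew-product setting of \eqref{lipschitz}, using the Sacker--Sell spectral decomposition and exponential dichotomy estimates packaged in Lemma \ref{floquet-bundle} and the invariant-subspace construction preceding Lemma \ref{zero-inva}. Since $F(v,\omega)=o(\|v\|)$ and $F$ is $C^1$ in $v$, the Lipschitz constant $\mathrm{Lip}\bigl(F(\cdot,\omega)\big|_{\|v\|\le r}\bigr)$ tends to $0$ as $r\to 0^+$, uniformly in $\omega\in\Omega$ by the compactness of $\Omega$. First I would fix a smooth radial cutoff $\chi:X\to[0,1]$ with $\chi=1$ on $\{\|v\|\le 1\}$ and $\chi=0$ on $\{\|v\|\ge 2\}$, set $\chi_{\delta^*}(v)=\chi(v/\delta^*)$, and replace $F$ by the globally defined and globally Lipschitz nonlinearity $\tilde F(v,\omega)=\chi_{\delta^*}(v)F(v,\omega)$ whose Lipschitz constant $L(\delta^*)$ can be made as small as we like by shrinking $\delta^*$.

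Next I would fix constants $a^-<b^+$ strictly separating the spectral pieces $\bigcup_{k>n_2}I_k$ and $\bigcup_{k<n_1}I_k$ from $\bigcup_{k=n_1}^{n_2}I_k$, write the ``hyperbolic'' splitting $X=V^{0,n_1-1}(\omega)\oplus V^{n_1,n_2}(\omega)\oplus V^{n_2+1,\infty}(\omega)$ with corresponding continuous projections $P_-(\omega), P_0(\omega), P_+(\omega)$, and record the exponential-dichotomy estimates on each summand that follow from Lemma \ref{floquet-bundle} and the definition of $V^{n_1,n_2}(\omega)$. With an exponential weight $\eta\in(a^-,b^+)$ chosen strictly between the spectral gaps, I would introduce the Banach space
\[
\mathcal Y_\omega=\Bigl\{v\in C(\mathbb R,X):\ \|v\|_\eta:=\sup_{t\in\mathbb R}e^{-\eta|t|}\|v(t)\|<\infty\Bigr\},
\]
and, for each $v_0\in V^{n_1,n_2}(\omega)$ with $\|v_0\|<\delta^*$, define the Lyapunov--Perron operator
\begin{align*}
(\mathcal T_{v_0,\omega}v)(t)
&=\Psi(t,\omega)v_0+\int_0^t\Psi(t-s,\omega\!\cdot\! s)P_0(\omega\!\cdot\! s)\tilde F(v(s),\omega\!\cdot\! s)\,ds\\
&\quad+\int_{-\infty}^t\Psi(t-s,\omega\!\cdot\! s)P_+(\omega\!\cdot\! s)\tilde F(v(s),\omega\!\cdot\! s)\,ds\\
&\quad-\int_t^{+\infty}\Psi(t-s,\omega\!\cdot\! s)P_-(\omega\!\cdot\! s)\tilde F(v(s),\omega\!\cdot\! s)\,ds.
\end{align*}
A routine estimate using the dichotomy bounds shows $\mathcal T_{v_0,\omega}$ is a contraction on $\mathcal Y_\omega$ provided $L(\delta^*)$ is small enough, giving a unique fixed point $v(\cdot;v_0,\omega)\in\mathcal Y_\omega$.

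I would then define $h^{n_1,n_2}(v_0,\omega)=(P_-+P_+)(\omega)v(0;v_0,\omega)$ and put
\[
W^{n_1,n_2}(\omega,\delta^*)=\bigl\{v_0+h^{n_1,n_2}(v_0,\omega):v_0\in V^{n_1,n_2}(\omega),\ \|v_0\|<\delta^*\bigr\}.
\]
Standard parameter-dependence arguments (differentiating the fixed-point equation in $v_0$ and applying the uniform contraction principle, cf.\ Chow--Lu and Henry) give $C^1$ dependence on $v_0$ and the uniform Lipschitz bound $\|\partial h^{n_1,n_2}/\partial v\|\le K_0$; the estimate $h^{n_1,n_2}(v_0,\omega)=o(\|v_0\|)$ and hence tangency to $V^{n_1,n_2}(\omega)$ at $0$ follows because $\tilde F(v,\omega)=o(\|v\|)$ at the origin, so $h^{n_1,n_2}(0,\omega)=0$ and $\partial_v h^{n_1,n_2}(0,\omega)=0$. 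Continuity in $\omega$ transfers from the continuity of $\omega\mapsto P_\pm(\omega),P_0(\omega)$ (from Lemma \ref{floquet-bundle}(iv)) and from the uniform contraction depending continuously on the parameter. Finally, local invariance is built into the construction: the fixed point relation gives $\Phi_t(v_0+h^{n_1,n_2}(v_0,\omega),\omega)=v(t;v_0,\omega)$, and as long as $v(t;v_0,\omega)$ stays in the region $\{\|v\|<\delta^*\}$ where $\chi_{\delta^*}=1$, it coincides with the solution of the original equation \eqref{lipschitz} and lies on $W^{n_1,n_2}(\omega\cdot t,\delta^*)$, so some $\tau=\tau(v_0,\omega)>0$ works.

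The main obstacle I anticipate is not the contraction argument itself, which is textbook once the dichotomy has been set up, but the careful verification that all constants ($K_0$, the contraction rate, the spectral gap $b^+-a^-$) can be chosen \emph{uniformly in $\omega\in\Omega$}. This uniformity is what gives a single $\delta_0$ independent of $\omega$ and is the place where the compactness of $\Omega$ together with the continuous, bounded dependence of $\Lambda(\omega)$ and $\Lambda^{-1}(\omega)$ from Lemma \ref{floquet-bundle}(iv) is essential; without it, the Lyapunov--Perron operator would give only $\omega$-dependent manifolds of shrinking size.
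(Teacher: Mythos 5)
Your Lyapunov--Perron outline is exactly what the paper has in mind: the paper itself gives no proof of this lemma beyond the citation ``can be proved by using arguments as in \cite{Chow1991, Chow1994-2, Hen}'', and those references carry out precisely the cutoff--plus--fixed-point construction you sketch. So the approach matches; the proposal is essentially a summary of the omitted argument.

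Two places, however, are imprecise enough that they should be repaired if this sketch were to be turned into a complete proof. First, the weighted space $\mathcal Y_\omega$ with a single symmetric rate $e^{-\eta|t|}$, $\eta\in(a^-,b^+)$, is not the right ambient space in general. To have the free trajectories $\Psi(t,\omega)v_0$ with $v_0\in V^{n_1,n_2}(\omega)$ lie in the space while still excluding the complementary directions, you need separate forward and backward rates, e.g.\ a norm of the form $\sup_{t\ge 0}e^{-\eta^+t}\|v(t)\|+\sup_{t\le 0}e^{-\eta^-t}\|v(t)\|$ with $\eta^+$ chosen in the gap $(b_{n_1},b^+)$ and $\eta^-$ in the gap $(a^-,a_{n_2})$. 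A single $|t|$-symmetric exponent fails whenever the spectral band $[a_{n_2},b_{n_1}]$ is not (nearly) symmetric about the origin: the center solutions may leave the space on one side, or the stable/unstable solutions may fail to be excluded on the other. Second, you correctly flag the need for uniformity of the cutoff in $\omega\in\Omega$, but the lemma also asserts a single $\delta_0$ valid simultaneously for \emph{all} pairs $0\le n_1\le n_2\le\infty$; this is the nontrivial point highlighted in Remark \ref{invariantly}(1) of the paper, and it rests on the fact that the Sacker--Sell spectral gaps $\mathrm{dist}(I_n,I_{n+1})$ grow without bound for the parabolic operator, so the dichotomy constants and hence the admissible smallness $L(\delta^*)$ of the cutoff nonlinearity do not deteriorate as $n_1,n_2\to\infty$. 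Without this observation the Lyapunov--Perron argument would produce only an $(n_1,n_2)$-dependent $\delta_0$, which is weaker than what the lemma claims.
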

\begin{remark}\label{invariantly}
{\rm
(1) The existence of $\delta_0$ in the above lemma which is independent of $n_1$ and $n_2$ is due to the increasing of the gaps between the spectrum intervals $I_n$ and $I_{n+1}$ as increases.\par
(2) Note that, as usual, $W^{n_1,n_2}(\omega,\delta^*)$ is constructed in terms of appropriate rate conditions for the solutions of \eqref{lipschitz} by replacing $F$ by a cutoff function $\tilde{F}$. It then follows that for any $n_1\leq n_2\leq n_3\leq \infty$ and $\omega\in \Omega$, $W^{n_1,n_2}(\omega,\delta^*)\subset W^{n_1,n_3}(\omega,\delta^*)$, and for any $u\in W^{n_1,\infty}(\omega,\delta^*)$, there are $u_n\in W^{n_1,n}(\omega,\delta^*)$ ($n_1\leq n <\infty$) such that $u_n\rightarrow u$ as $n\rightarrow \infty$.\par
(3) For any $0\leq n_1\leq n_2<\infty$ and $\omega\in \Omega$, there are $\tau>0$ and $0<\delta_1^*<\delta_0$ (here $\tau$ and $\delta_1^*$ are independent of $\omega$), such that $\Phi_t(W^{n_1,n_2}(\omega,\delta_1^*),\omega)\subset W^{n_1,n_2}(\omega\cdot t,\delta^*)$ for any $t$ with $|t|<\tau$.
}
\end{remark}
 If $0\in I_{n_0}=[a_{n_0},b_{n_0}]\subset\sigma(\Omega)$. Then $W^s(\omega,\delta^*)=W^{n_0+1,\infty}(\omega,\delta^*)$, $W^{cs}(\omega,\delta^*)=W^{n_0,\infty}(\omega,\delta^*)$, $W^c(\omega,\delta^*)=W^{n_0,n_0}(\omega,\delta^*)$, $W^{cu}(\omega,\delta^*)=W^{0,n_0}(\omega,\delta^*)$, and $W^u(\omega,\delta^*)=W^{0,n_0-1}(\omega,\delta^*)$ are referred to as {\it local stable, center stable, center, center unstable, and unstable manifolds} of \eqref{lipschitz} at $\omega\in \Omega$, respectively.\par
\begin{remark}\label{overflow-invariant}
{\rm
(1) $W^s(\omega,\delta^*)$ and $W^u(\omega,\delta^*)$ are overflowing invariant in the sense that if $\delta^*$ is sufficiently small, then
\[
\Phi_t(W^s(\omega,\delta^*),\omega)\subset W^s(\omega\cdot t,\delta^*),
\]
for $t$ sufficiently positive, and
\[
\Phi_t(W^u(\omega,\delta^*),\omega)\subset W^u(\omega\cdot t,\delta^*),
\]
for $t$ sufficiently negative. Moreover, one can find constants $\alpha$, $C>0$, such that for any $\omega\in \Omega$, $v^s\in W^s(\omega,\delta^*)$, $v^u\in W^u(\omega,\delta^*)$,
\begin{equation}\label{exponen-decrea}
\begin{split}
  \|\Phi_t(v^s,\omega)\|\leq Ce^{-\frac{\alpha}{2}t}\|v^s\|\quad \text{for}\ t\geq 0,\\
  \|\Phi_t(v^u,\omega)\|\leq Ce^{\frac{\alpha}{2}t}\|v^u\|\quad \text{for}\ t\leq 0.
\end{split}
\end{equation}

(2) By the invariant foliation theory in \cite{Chow1991, Chow1994-2}, one has that for any $\omega\in \Omega$,
\[
W^{cs}(\omega,\delta^*)={\cup}_{u_c\in W^c(\omega,\delta^*)}\bar{W}_s(u_c,\omega,\delta^*),
\]
where $\bar{W}_s(u_c,\omega,\delta^*)$ is the so-called {\it stable leaf} of \eqref{lipschitz} at $u_c$, and it is invariant in the sense that if $\tau>0$ is such that $\Phi_t(u_c,\omega)\in W^{c}(\omega\cdot t,\delta^*)$ and $\Phi_t(u,\omega)\in W^{cs}(\omega,\delta^*)$ for all $0\leq t<\tau$, where $u\in \bar{W}_s(u_c,\omega,\delta^*)$, then $\Phi_t(u,\omega)\subset \bar{W}_s(\Phi_t(u_c,\omega),\omega\cdot t,\delta^*)$ for $0\leq t<\tau$. Moreover, there are $K,\beta >0$ independent of the $\omega\in\Omega$ such that for any $u\in\bar{W}_s(u_c,\omega,\delta^*)$ ($u_c\neq 0$) and $\tau>0$ with $\Phi_t(u,\omega)\in W^{cs}(\omega\cdot t,\delta^*)$, $\Phi_t(u_c,\omega)\in W^{c}(\omega\cdot t,\delta^*)$ for $0\leq t<\tau$, one has that
\[
\frac{\|\Phi_t(u,\omega)-\Phi_t(u_c,\omega)\|}{\|\Phi_t(u_c,\omega)\|}\leq Ke^{-\beta t}\frac{\|u-u_c\|}{\|u_c\|}
\] and
\[
\|\Phi_t(u,\omega)-\Phi_t(u_c,\omega)\|\le  Ke^{-\beta t}\|u-u_c\|
\]
for $0\leq t<\tau$.
}
\end{remark}

\section{Almost Automorphically and Almost Periodically Forced Circle Flows}

In this section, we  consider  the structure of minimal sets of \eqref{equation-lim1} in {\it the case that
$f=f(t,u,u_x)$.} \vskip 2mm

Note that  \eqref{equation-lim1} generates a (local) skew-product semiflow $\Pi^t$ on $X\times H(f)$:
 \begin{equation}\label{skew-product2}
\Pi^t(u_0,g)=(\varphi(t,\cdot;u_0,g),g\cdot t),
\end{equation}
where $X$ is also defined as in introduction and $\varphi(t,\cdot;u_0,g)$ is the solution of \eqref{equation-lim1}    with $\varphi(0,\cdot,u_0,g)=u_0(\cdot)$, and $g\cdot t$ denotes the flow on $H(f)$.

Let $\Omega \subset X\times H(f)$ be a compact and connected invariant set of \eqref{skew-product2}. For any $\omega=(u_0,g)\in \Omega$, denote $\omega\cdot t=\Pi^t(u_0,g)$. Let $v=\varphi(t,\cdot;u,g)-\varphi(t,\cdot;u_0,g)$. Then $v$ satisfies the following equation:
\begin{equation}\label{variation-equation}
v_t=v_{xx}+a(x,\omega\cdot t)v_x+b(x,\omega\cdot t)v+F(v,\omega\cdot t),\,\,t>0,\,x\in S^{1}=\mathbb{R}/2\pi \mathbb{Z},
\end{equation}
where $F(v,\omega\cdot t)=g(t,v+u_0,v_x+u_{0x})-g(t,u_0,u_{0x})-a(x,\omega\cdot t)-b(x,\omega\cdot t)v$, $a(x,\omega\cdot t)=g_p(t,u_0,u_{0x})$, $ b(x,\omega\cdot t)=g_u(t,u_0,u_{0x})$.\par

Denote $A(\omega)=\frac{\partial^2}{\partial x^2}+a(\cdot,\omega)\frac{\p }{\p x}+b(\cdot,\omega)$. Then \eqref{equation-lim1} can be rewritten as
\begin{equation}\label{linear-opera2}
v'=A(\omega\cdot t)v+F(v,\omega\cdot t).
\end{equation}
Suppose that $\sigma(\Omega)=\cup_{k=0}^{\infty}I_k$ ($I_k$ is ordered from right to left) is the Sacker-Sell spectrum of the linear equation associated with \eqref{linear-opera2}:
\begin{equation}\label{associate-eq}
v'=A(\omega\cdot t)v,\,\,t>0,\,\omega\in \Omega,\,v\in X.
\end{equation}
For any given $0\leq n_1\leq n_2\leq \infty$, let $V^{n_1,n_2}(\omega)$ be the invariant subspace of \eqref{associate-eq} associated with the spectrum set $\cup_{k=n_1}^{n_2}I_k$ at $\omega\in \Omega$.
If $0\in\sigma(\Omega)$,
let  $V^s(\omega)$, $V^{cs}(\omega)$, $V^{c}(\omega)$, $V^{cu}(\omega)$, and $V^u(\omega)$  be the
 {\it stable, center stable, center, center unstable}, and {\it unstable subspaces} of \eqref{associate-eq} at $\omega\in \Omega$, respectively.

For given $u\in X$, let $\sigma_a$ be the $S^1$-action on $u$ induced by shifting $x$
\begin{equation*}
  (\sigma_au)(x):=u(x+a),\quad a\in S^1=\mathbb{R}/2\pi\mathbb{Z}
\end{equation*}
and define
$$
\Sigma u=\{\sigma_au\,:\, a\in S^1\}.
$$
Since $f=f(t,u,u_x)$ in this section, it is not difficult to see that the action of $S^1$ on the solution $\sigma_a\varphi(t,\cdot;u,g)$ is equivariant (also called translation invariance), i.e., $\sigma_a\varphi(t,\cdot;u,g)=\varphi(t,\cdot;\sigma_au,g)$ for all $t\in \mathbb{R}^+,u\in X,a\in S^1$ and $g\in H(f)$.

The main results of this section are stated in the following theorems.

\begin{theorem}\label{sysm-embed}
 Let $M \subset X\times H(f)$ be a spatially inhomogeneous minimal set of \eqref{skew-product2}.
\begin{itemize}
\item[{\rm (1)}] If ${\rm dim}V^c(\omega)=2$ with ${\rm dim}V^u(\omega)$ being odd,  then there is a residual invariant set $Y_0\subset H(f)$ such that, for any $g\in Y_0$, there exists $u_g \in X$ satisfying $p^{-1}(g)\cap M\subset (\Sigma u_g,g)$. Moreover, there is a $C^1$-function $c^g:\mathbb{R}\to \mathcal{S}^1; t\mapsto c^g(t)$ (with its derivative $\dot c^g(t)$ being almost-automorphic in $t$) such that
\begin{equation}\label{E:transla-cg-func}
\varphi(t,x,u_g,g)=u_{g\cdot t}(x+c^g(t)),
\end{equation}
where $\mathcal{S}^1=\mathbb{R}/L\mathbb{Z}$ and $L$ is the smallest common spatial-period of any element in $M$.

\item[{\rm (2)}] If ${\rm dim}V^c(\omega)=1$, then $Y_0=H(f)$ in {\rm (1)}; and moreover, the derivative $\dot c^g(t)$ is almost-periodic in $t$.


\end{itemize}
\end{theorem}

\begin{theorem}\label{ergodic-thm}
Let $M \subset X\times H(f)$ be a minimal set of \eqref{skew-product2}.
\begin{itemize}
\item[{\rm (1)}]
If $M$ is uniquely ergodic, then ${\rm dim}V^c(\omega)\le 2$. Moreover, if ${\rm dim} V^c(\omega)=2$, then $\dim V^u(\omega)$ must be odd.

\item[{\rm (2)}] If $M$ is spatially homogeneous, then either ${\rm dim}V^u(\omega)=0$ or
${\rm dim}V^u(\omega)$ is odd.
\end{itemize}
\end{theorem}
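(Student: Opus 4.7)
The plan is to combine the Floquet bundle decomposition of Lemma~\ref{floquet-bundle} with strict exponential separation between bundles, using unique ergodicity in part~(1) and Fourier diagonalization in part~(2) to control the Sacker--Sell structure along $M$.

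For part~(1), I would apply Lemma~\ref{floquet-bundle} to split the linearized cocycle along $M$ into the one-dimensional bundle $W_0(\omega)$ (zero number $0$) and the two-dimensional bundles $W_n(\omega)$, $n\ge 1$ (zero number $2n$). Unique ergodicity of $M$, together with Birkhoff's theorem applied to the cocycle expressed in the Floquet basis and with the uniform convergence available on uniquely ergodic flows, yields constant Lyapunov exponents on each bundle: $\lambda_0$ on $W_0$ and $\mu_n\ge\nu_n$ on each $W_n$. The exponential separation in Lemma~\ref{floquet-bundle}(v) provides \emph{uniform} dichotomy rates between $\bigoplus_{i\le k}W_i$ and $\bigoplus_{i>k}W_i$ for every $k$, which in turn forces the strict uniform ordering
\[
\lambda_0 \;>\; \mu_1 \;\ge\; \nu_1 \;>\; \mu_2 \;\ge\; \nu_2 \;>\; \cdots
\]
with spectral gaps in the Sacker--Sell spectrum between each pair of adjacent blocks. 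Thus the spectral interval $I_{n_0}$ containing $0$ must lie inside a single Floquet block, giving $\dim V^c(\omega)\le 2$. When $\dim V^c(\omega)=2$, necessarily $V^c(\omega)=W_{n^*}(\omega)$ for some $n^*\ge 1$ (since $\dim W_0=1$), and the ordering above identifies $V^u(\omega)=W_0(\omega)\oplus W_1(\omega)\oplus\cdots\oplus W_{n^*-1}(\omega)$ of dimension $1+2(n^*-1)=2n^*-1$, which is odd.

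For part~(2), a spatially homogeneous minimal set $M$ consists of $x$-independent functions $u=u(t)$ satisfying the ODE $\dot u=f(t,u,0)$. The linearization $v_t=v_{xx}+a(t)v_x+b(t)v$, with $a(t)=f_p(t,u(t),0)$ and $b(t)=f_u(t,u(t),0)$ independent of $x$, decouples under the Fourier expansion $v(t,x)=\sum_{k\in\mathbb Z}v_k(t)e^{ikx}$ into the scalar complex ODEs $\dot v_k=(-k^2+ika(t)+b(t))v_k$. Each mode $k\ge 1$ carries a real $2$-dimensional bundle and $k=0$ a $1$-dimensional one; the Sacker--Sell spectral interval $J_k$ of mode $k$ is the rigid shift of $J_0$ by $-k^2$. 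Consequently, if $J_k\subset(0,\infty)$ for some $k\ge 1$ then $J_0\subset(k^2,\infty)\subset(0,\infty)$ as well, and tracking the possibly merged spectral components shows that the Fourier modes whose bundles sit in $V^u(\omega)$ form an initial segment $\{0,1,\ldots,K\}$ for some $K\ge 0$. Hence $\dim V^u(\omega)=1+2K$ is odd whenever $V^u(\omega)\neq\{0\}$.

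The main obstacle is the passage from the Lyapunov/Birkhoff picture (nicely controlled by unique ergodicity) to the Sacker--Sell spectral picture (whose intervals could a priori be non-degenerate even then). The decisive input is the uniform exponential separation in Lemma~\ref{floquet-bundle}(v), which supplies the dichotomy gaps needed to confine the spectral component around $0$ to a single Floquet block in part~(1). In part~(2) the analogous technical point is that merging of spectral components among different Fourier modes is forced by the $-k^2$ shifts to respect the $k$-ordering, so $V^u(\omega)$ always comprises an initial block of modes starting from $k=0$.
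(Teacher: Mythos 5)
Your argument is correct and follows essentially the same route as the paper: part~(1) combines the Floquet block decomposition and the exponential separation of Lemma~\ref{floquet-bundle}(v) with the constancy of Lyapunov exponents under unique ergodicity to confine the spectral interval at $0$ to a single Floquet block (the paper phrases this as a contradiction with the pure-point Sacker--Sell spectrum instead of a direct ordering, but the substance is identical), and part~(2) exploits the $x$-independence of the linearized coefficients to reduce to shifted copies of a single scalar spectral interval indexed by Fourier mode, which is exactly what the paper achieves via the explicit transformations $w=v(t,x+c(t))$ and $\hat w=we^{-\int_0^t b}$ leading to $\hat w_t=\hat w_{xx}$.
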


\begin{remark}\label{rotation-rmark}
{\rm Theorem \ref{sysm-embed} implies that, under certain circumstances, any spatially inhomogeneous minimal set $M$ can be residually embedded into an almost automorphically forced circle-flow $S^1\times H(f)$. In particular, if $M$ is {\it normally hyperbolic} (i.e., ${\rm dim}V^c(\omega)=1$), $M$ can be totally embedded into an almost periodically forced circle-flow. Thus we have generalized the results of Sandstede and Fiedler \cite{SF1} to time almost-periodic systems.
However, for almost periodically forced circle flow, it is already known that such flow can still be very complicated (See \cite{HuYi} and the references therein).}
\end{remark}

To prove the above  theorem,  we need to present some lemmas.

For given $0\leq n_1\leq n_2\leq\infty$ and $\omega=(u_0,g)\in \Omega$, by Lemma \ref{invari-mani}, there is a well-defined local invariant manifold $W^{n_1,n_2}(\omega,\delta^*)$ of \eqref{linear-opera2}. Let
\begin{equation}\label{gener-inva}
M^{n_1,n_2}(\omega,\delta^*)=\{u\in X:u-u_0\in W^{n_1,n_2}(\omega,\delta^*)\}.
\end{equation}
$M^{n_1,n_2}(\omega,\delta^*)$ is referred to as a {\it local invariant manifold} of \eqref{skew-product2} at $(u_0,g)$.\par
Suppose  that $0\in \sigma(\Omega)$ and $n_0$ is such that $I_{n_0}=[a_{n_0},b_{n_0}]\subset \sigma(\Omega)$ with $a_{n_0}\leq 0\leq b_{n_0}$. For given $\omega=(u_0,g)\in \Omega$, $\delta^*>0$, let
\begin{eqnarray*}
\begin{split}
  M^s(\omega,\delta^*)=& M^{n_0+1,\infty}(\omega,\delta^*)\\
  M^{cs}(\omega,\delta^*)=& M^{n_0,\infty}(\omega,\delta^*)\\
  M^c(\omega,\delta^*)=& M^{n_0,n_0}(\omega,\delta^*)\\
  M^{cu}(\omega,\delta^*)=& M^{0,n_0}(\omega,\delta^*)\\
  M^{u}(\omega,\delta^*)=& M^{0,n_0-1}(\omega,\delta^*)
\end{split}
\end{eqnarray*}
Then $M^s(\omega,\delta^*)$, $M^{cs}(\omega,\delta^*)$, $M^{c}(\omega,\delta^*)$, $M^{cu}(\omega,\delta^*)$, and $M^{u}(\omega,\delta^*)$ are continuous in $\omega\in \Omega$ and referred to as {\it local stable, center stable, center, center unstable}, and {\it unstable manifolds} of \eqref{skew-product2} at $\omega=(u_0,g)\in \Omega$, respectively.\par
\begin{remark}\label{stable-leaf}
{\rm
By Remark \ref{overflow-invariant}(2), for any $\omega=(u_0,g)\in \Omega$, one has
\[
M^{cs}(\omega,\delta^*)=\cup_{u_c\in M^c(\omega,\delta^*)}\bar{M}_s(u_c,\omega,\delta^*)
\]
where $\bar{M}_s(u_c,\omega,\delta^*)=\{u\in X:u-u_0\in \bar{W}^s(u_c-u_0,\omega,\delta^*)\}$ and $\bar{M}_s(u_0,\omega,\delta^*)=M^s(\omega,\delta^*)$. Moreover, there are $K$, $\beta>0$, which are independent of $\omega\in\Omega$, such that for any $u^*\in \bar{M}_s(u_c,\omega,\delta^*)$, $u_c\neq u_0$, and $\tau>0$ with $\varphi(t,\cdot;u^*,g)\in M^{cs}(\omega\cdot t,\delta^*)$, $\varphi(t,\cdot;u_c,g)\in M^c(\omega\cdot t,\delta^*)$ for any $0\leq t<\tau$, one has that
\begin{equation}\label{foliation-contr}
\frac{\|\varphi(t,\cdot;u^*,g)-\varphi(t,\cdot;u_c,g)\|}{\|\varphi(t,\cdot;u_c,g)-\varphi(t,\cdot;u_0,g)\|}\leq Ke^{-\beta t}\frac{\|u^*-u_c\|}{\|u_c-u_0\|}
\end{equation}
and
\begin{equation}\label{foliation-contr-a1}
\norm{\varphi(t,\cdot;u^*,g)-\varphi(t,\cdot;u_c,g)} \leq  Ke^{-\beta t}\|u^*-u_c\|
\end{equation}
for $0\leq t<\tau$.
}
\end{remark}
\begin{lemma}\label{center-foliation}
Let $\delta^*>0$ as in Lemma \ref{invari-mani}. Then for any $\delta\in (0,\delta^*)$, there exists some $\delta_{cs}\in (0,\delta)$ such that, for any $\omega=(u_0,g)\in \Omega$, $u_c\in M^c(\omega,\delta_{cs})\setminus\{u_0\}$ and $u^*\in \bar{M}_s(u_c,\omega,\delta_{cs})\setminus\{u_0\}$, the following statements hold:
 \begin{itemize}
 \item[{\rm (i)}]
  Let $\tau>0$ be any number such that $\varphi(t,\cdot;u_c,g)\in M^c(\omega\cdot t,\delta)\setminus\{\varphi(t,\cdot;u_0,g)\}$ for $0\leq t<\tau$. Then $\varphi(t,\cdot;u^*,g)\in M^{cs}(\omega\cdot t,\delta^*)$ for $0\leq t<\tau$.
\item[{\rm (ii)}]
 Let $\tau>0$ be  any number such that $\varphi(t,\cdot;u_c,g)\in M^c(\omega\cdot t,\delta)\setminus\{\varphi(t,\cdot;u_0,g)\}$ for $0\leq t<\tau$, but $\varphi(\tau,\cdot;u_c,g)\notin M^c(\omega\cdot \tau,\delta)\setminus\{\varphi(\tau,\cdot;u_0,g)\}$. Then for any $\epsilon>0$, one may take such $\delta_{cs}>0$ smaller so that
\begin{equation}\label{bound-estimate1}
  \frac{\|\varphi(\tau,\cdot;u^*,g)-\varphi(\tau,\cdot;u_c,g)\|}{\|\varphi(\tau,\cdot;u_c,g)-
  \varphi(\tau,\cdot;u_0,g)\|}\leq \epsilon.
\end{equation}
\item[{\rm (iii)}] If $\varphi(t,\cdot;u_c,g)\in M^c(\omega\cdot t,\delta)\setminus\{\varphi(t,\cdot;u_0,g)\}$ for any $t>0$, then
\begin{equation}\label{bound-estimate12}
  \frac{\|\varphi(t,\cdot;u^*,g)-\varphi(t,\cdot;u_c,g)\|}{\|\varphi(t,\cdot;u_c,g)-
  \varphi(t,\cdot;u_0,g)\|}\leq \epsilon.
\end{equation}
for all $t>0$ sufficiently large.
 \end{itemize}
\end{lemma}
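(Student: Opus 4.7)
My plan is to base the argument on the stable-leaf estimates of Remark \ref{stable-leaf}: the absolute contraction \eqref{foliation-contr-a1} and the relative contraction \eqref{foliation-contr}. The overall strategy is to choose $\delta_{cs}$ so small that the orbit $\varphi(t,\cdot;u^*,g)$ stays trapped inside $M^{cs}(\omega\cdot t,\delta^*)$ as long as $\varphi(t,\cdot;u_c,g)$ remains in the smaller manifold $M^c(\omega\cdot t,\delta)$, and then to exploit the contraction toward the center orbit to push the ratio in \eqref{bound-estimate1} below any prescribed $\epsilon$.

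For part (i) I would run a standard continuation argument. Set
\[
\tau^{*}=\sup\bigl\{s\in[0,\tau]:\varphi(t,\cdot;u^{*},g)\in M^{cs}(\omega\cdot t,\delta^{*})\text{ for all }t\in[0,s]\bigr\};
\]
the inclusion $u^{*}\in\bar{M}_{s}(u_{c},\omega,\delta_{cs})\subset M^{cs}(\omega,\delta^{*})$ and continuity force $\tau^{*}>0$. On $[0,\tau^{*})$ both $\varphi(t,\cdot;u_{c},g)\in M^{c}(\omega\cdot t,\delta^{*})$ and $\varphi(t,\cdot;u^{*},g)\in M^{cs}(\omega\cdot t,\delta^{*})$, so the stable-leaf invariance of Remark \ref{overflow-invariant}(2) applies and \eqref{foliation-contr-a1} yields $\|\varphi(t,\cdot;u^{*},g)-\varphi(t,\cdot;u_{c},g)\|\le K\|u^{*}-u_{c}\|\le 2K\delta_{cs}$. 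Combined with the bound $\|\varphi(t,\cdot;u_{c},g)-\varphi(t,\cdot;u_{0},g)\|\le C_{0}\delta$ coming from the graph representation of $M^{c}(\cdot,\delta)$, decomposing $\varphi(t,\cdot;u^{*},g)-\varphi(t,\cdot;u_{0},g)$ along the splitting $X=V^{cs}(\omega\cdot t)\oplus V^{u}(\omega\cdot t)$ shows that its $V^{cs}$-parameter has norm bounded by a constant multiple of $\delta+\delta_{cs}$. Choosing $\delta_{cs}$ small enough relative to $\delta^{*}-\delta$ keeps this quantity strictly below $\delta^{*}$, contradicting $\tau^{*}<\tau$; hence $\tau^{*}=\tau$.

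For part (ii), applying \eqref{foliation-contr-a1} at $t=\tau$, which is legitimate by part (i) and continuity, I obtain $\|\varphi(\tau,\cdot;u^{*},g)-\varphi(\tau,\cdot;u_{c},g)\|\le 2K\delta_{cs}$. Since $\tau$ is the first exit time from $M^{c}(\omega\cdot t,\delta)$, the center parameter satisfies $\|v_{0}^{c}(\tau)\|=\delta$, and the tangency $h^{c}=o(\|v_{0}^{c}\|)$ forces $\|\varphi(\tau,\cdot;u_{c},g)-\varphi(\tau,\cdot;u_{0},g)\|\ge\delta/2$ once $\delta_{0}$ in Lemma \ref{invari-mani} is chosen small enough. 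Hence the left side of \eqref{bound-estimate1} is bounded above by $4K\delta_{cs}/\delta$, and shrinking $\delta_{cs}$ further to $\delta_{cs}\le\epsilon\delta/(4K)$ gives \eqref{bound-estimate1}. Part (iii) then follows immediately: the continuation argument of (i) extends to all $t\ge 0$, and \eqref{foliation-contr} gives the ratio $\le Ke^{-\beta t}\|u^{*}-u_{c}\|/\|u_{c}-u_{0}\|\to 0$ as $t\to\infty$, which drops below $\epsilon$ for all sufficiently large $t$. The main difficulty will be part (i): the exit of $u^{*}(t)$ from $M^{cs}(\cdot,\delta^{*})$ is measured by the $V^{cs}$-projection rather than the raw norm $\|u^{*}(t)-u_{0}(t)\|$, so one must carefully combine the foliation invariance with the graph descriptions of $M^{c}$ and $M^{cs}$ to propagate the inclusion; parts (ii) and (iii) are routine once this trapping is established.
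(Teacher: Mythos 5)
Your proposal is essentially correct and relies on the same two key ingredients as the paper's proof: the absolute stable-leaf contraction \eqref{foliation-contr-a1} to trap $\varphi(t,\cdot;u^*,g)$ near the center orbit, and the relative contraction \eqref{foliation-contr} to drive the ratio in \eqref{bound-estimate12} to zero. The treatment of (ii) and (iii) matches the paper's almost verbatim.

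The one genuine difference is the structure of the argument in (i). You run a direct continuation/trapping argument by introducing the first exit time $\tau^*$ and showing the orbit cannot actually leave $M^{cs}(\omega\cdot t,\delta^*)$ at $\tau^*$. The paper instead argues by contradiction with sequences $\delta_n\to 0$, $\omega_n\in\Omega$, $u_c^n$, $u_n^*$, $t_n$, splitting into cases $\{t_n\}$ bounded and unbounded, and derives the contradiction from \eqref{foliation-contr-a1} plus the triangle inequality $\|\varphi(t_n,\cdot;u_n^*,g_n)-\varphi(t_n,\cdot;u_n,g_n)\| < \tfrac{\delta^*-\delta}{2}+\delta < \delta^*$. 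The sequential contradiction buys uniformity in $\omega\in\Omega$ (and in the choice of $u_c, u^*$) ``for free,'' while your direct argument tacitly requires that the projection constants in the splitting $X=V^{cs}(\omega\cdot t)\oplus V^u(\omega\cdot t)$ and the graph bounds on $h^{cs}$, $h^c$ be uniform over the compact set $\Omega$; this is true (by compactness and continuity of the subbundles) but deserves an explicit mention. You also correctly flag the subtlety that exit from $M^{cs}$ is measured by the $V^{cs}$-parameter rather than the raw distance to $u_0$; the paper glosses over this by invoking the local invariance of $M^{cs}(\omega,\delta^*)$ in one step. Both routes are valid; the paper's is somewhat more robust on uniformity, while yours is more transparent about which quantity controls escape from the manifold.
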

\begin{proof}
 The statement in this lemma can also be found in \cite[p.313]{ShenYi-JDDE96}. For the sake of completeness, we give a detailed proof below.

(i) Suppose on the contrary that there exist sequences $\delta_n\rightarrow 0$, $\omega_n\triangleq(u_n,g_n)\in \Omega$, $u_c^n\in M^c(\omega_n,\delta_n)$, $u_n^*\in \bar{M}_s(u_c^n,\omega_n,\delta_n)\setminus\{u_c^n\}$ and $\tau_n>0$ satisfying that
\begin{equation}\label{local-ome-cs-0}
\varphi(t,\cdot; u_c^n,g_n)\in M^c(\omega_n\cdot t,\delta)\,\, \text{ for }0\leq t<\tau_n,
 \end{equation}
 while one can find some $t_n\in [0,\tau_n)$ such that
\begin{equation}\label{local-ome-cs}
\varphi(t,\cdot;u_n^*,g_n)\in M^{cs}(\omega_n\cdot t,\delta^*)\text{ for }t\in [0,t_n), \text{ but } \varphi(t_n,\cdot;u_n^*,g_n)\notin M^{cs}(\omega_n\cdot t_n,\delta^*).
\end{equation}
Recall that $\delta_n\to 0$. Then
$\norm{u_c^n-u_n}\to 0$ and $\norm{u^*_n-u_c^n}\to 0$ as $n\to \infty.$
If $\{t_n\}$ is bounded, then $\|\varphi(t,\cdot;u^*_n,g_n)-\varphi(t,\cdot;u_n,g_n)\|\rightarrow 0$ uniformly for $t\in [0,t_n]$. This entails that $\|\varphi(t_n,\cdot;u^*_n,g_n)-\varphi(t_n,\cdot;u_n,g_n)\|<\delta^*$ for all $n$ sufficiently large. So, the local invariance of $M^{cs}(\omega,\delta^*)$ implies that $\varphi(t_n,\cdot;u_n^*,g_n)\in M^{cs}(\omega_n\cdot t_n,\delta^*)$, a contradiction to the second statement of \eqref{local-ome-cs}.

If $\{t_n\}$ is unbounded, then it follows from \eqref{local-ome-cs-0} and the first statement of \eqref{local-ome-cs} that $\varphi(t,\cdot;u^*_n,g_n)\in \bar{M}_s(\varphi(t,\cdot; u_c^n,g_n),\omega_n\cdot t,\delta^*),$ for $t\in [0,t_n)$. So, by \eqref{foliation-contr-a1},
there exist $K>0,\beta>0$ such that $\|\varphi(t,\cdot;u^*_n,g_n)-\varphi(t,\cdot;u_c^n,g_n)\|\leq Ke^{-\beta t}\|u^*_n-u_c^n\|$ for $0\le t<t_n$. Consequently, $\|\varphi(t_n,\cdot;u^*_n,g_n)-\varphi(t_n,\cdot;u_c^n,g_n)\|<\frac{\delta^*-\delta}{2}$, for $n$ sufficiently large. Thus,
  \begin{equation*}
  \begin{split}
    &\|\varphi(t_n,\cdot;u^*_n,g_n)-\varphi(t_n,\cdot;u_n,g_n)\|\\
    &\leq \|\varphi(t_n,\cdot;u^*_n,g_n)-\varphi(t_n,\cdot;u_c^n,g_n)\|
    +\|\varphi(t_n,\cdot;u_c^n,g_n)-\varphi(t_n,\cdot;u_n,g_n)\|\\
    &<\frac{\delta^*-\delta}{2}+\delta<\delta^*, \,\,\text{ for } n \text{ sufficiently large. }
  \end{split}
  \end{equation*}
   (Here, $\|\varphi(t_n,\cdot;u_c^n,g_n)-\varphi(t_n,\cdot;u_n,g_n)\|\le \delta$ is due to
  \eqref{local-ome-cs-0}.)
 By the local invariance of $M^{cs}(\omega,\delta^*)$ again, we obtain $\varphi(t_n,\cdot;u_n^*,g_n)\in M^{cs}(\omega_n\cdot t_n,\delta^*)$ for $n$ sufficiently large, a contradiction to the second statement of \eqref{local-ome-cs}. Thus, we have proved (i).

(ii) By virtue of (i), one has $\varphi(t,\cdot;u^*,g)\in M^{cs}(\omega\cdot t,\delta^*)$ for $0\leq t<\tau$. It then follows from Remark \ref{stable-leaf} that \eqref{foliation-contr-a1} holds for $0\leq t<\tau$. As a consequence, we have
\begin{equation*}
 \frac{\|\varphi(\tau,\cdot;u^*,g)-\varphi(\tau,\cdot;u_c,g)\|}
  {\|\varphi(\tau,\cdot;u_c,g)-\varphi(\tau,\cdot;u_0,g)\|}\leq
  \frac{Ke^{-\beta\tau}\delta_{cs}}{\delta}\leq\frac{K\delta_{cs}}{\delta}.
  \end{equation*}
So, for any $\epsilon>0$, one can choose $\delta_{cs}$ smaller so that \eqref{bound-estimate1} holds.

(iii) Again, by (i), one has $\varphi(t,\cdot;u^*,g)\in M^{cs}(\omega\cdot t,\delta^*)$ for all $t>0$. So, \eqref{foliation-contr} in Remark \ref{stable-leaf} implies that \eqref{bound-estimate12} holds for any $t$ sufficiently large. Thus, we have completed the proof of this lemma.
\end{proof}

\begin{lemma}\label{L:lap-number-control-on-mfds}
 Let $\delta_0$ be as in Lemma \ref{invari-mani} and sufficiently small. For any given $0\leq n_1\leq n_2\leq\infty$, let $N_1$ and $N_2$ be as in Lemma \ref{zero-inva}. Then the following holds.
 \begin{itemize}
 \item[{\rm (i)}]
 If $n_2<\infty$, then there is a $0<\delta_{n_1,n_2}^*<\delta_0$ such that $N_1\leq z(u(\cdot)-u_0(\cdot))\leq N_2$ for any $u\in M^{n_1,n_2}(\omega,\delta^*_{n_1,n_2})\setminus \{u_0\}$ ($\omega=(u_0,g)\in \Omega$).
 \item[{\rm (ii)}]
 If $n_1\geq0$ is such that $I_{n_1}\subset \mathbb{R}^-=\{\lambda\in \mathbb{R}: \lambda<0\}$, then for any $\delta^*\in (0,\delta_0)$, $n_1\leq n_2\leq \infty$, and $u\in M^{n_1,n_2}(\omega,\delta^*)\setminus\{u_0\}$ ($\omega=(u_0,g)\in \Omega$), one has $z(u(\cdot)-u_0(\cdot))\geq N_1$.
 \item[{\rm (iii)}]
Let $n_1\geq0$ be such that $0\in I_{n_1}$. If $\dim V^{n_1,n_1}(\omega)=1$, or $\dim V^{n_1,n_1}(\omega)=2$ with $\dim V^u(\omega)$ being odd, then there is a $\delta_{n_1,\infty}^*\in (0,\delta_0)$ such that $z(u(\cdot)-u_0(\cdot))\geq N_1$, for any $u\in M^{n_1,\infty}(\omega,\delta^*_{n_1,\infty})\setminus\{u_0\}$ ($\omega=(u_0,g)\in \Omega$).
 \end{itemize}
\end{lemma}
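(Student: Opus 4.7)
The overall plan is to treat (i), (ii), (iii) by a unified normalization-and-contradiction argument. Assume a sequence $\delta_k\to 0$, $\omega_k=(u_{0,k},g_k)\in\Omega$ and $u_k\in M^{n_1,n_2}(\omega_k,\delta_k)\setminus\{u_{0,k}\}$ violating the asserted zero-number bound; write
$v_k=u_k-u_{0,k}=v_k^c+h^{n_1,n_2}(v_k^c,\omega_k)$
via Lemma \ref{invari-mani} with $v_k^c\in V^{n_1,n_2}(\omega_k)$, and rescale $\tilde v_k=v_k/\|v_k^c\|$. Using $\|h^{n_1,n_2}(v_k^c,\omega_k)\|=o(\|v_k^c\|)$ and the embedding $X\hookrightarrow C^1$, the aim is to extract a subsequential limit $\omega_k\to\omega_*$, $\tilde v_k\to\tilde v$ in $C^1$, with $\tilde v\in V^{n_1,n_2}(\omega_*)$ a nonzero unit vector. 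The key auxiliary tool is the Duhamel-type approximation $v_k(t)/\|v_k^c\|\to\psi(t,\cdot;\tilde v,\omega_*)$ in $C^1$ (at any $t$ where both sides are defined), combined with the monotonicity in Lemma \ref{sigma-function}(a), which lets us flow to a time where the linear solution has only simple zeros so that the zero-number of $\tilde v$ is transported to that of $v_k$.

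For part (i), since $n_2<\infty$, $V^{n_1,n_2}(\omega_k)$ is finite-dimensional, and the limit $\tilde v$ is produced directly from compactness of its unit sphere. For the lower bound $z(v_k)\ge N_1$: choose $T>0$ with $\psi(T,\cdot;\tilde v,\omega_*)$ having only simple zeros (Lemma \ref{zero-number}(c)); the approximation yields $z(v_k(T))=z(\psi(T,\cdot;\tilde v,\omega_*))\ge N_1$ by Lemma \ref{zero-inva} for large $k$, and Lemma \ref{sigma-function}(a) gives $z(v_k)\ge z(v_k(T))\ge N_1$. For the upper bound $z(v_k)\le N_2$: $W^{n_1,n_2}$ admits backward flow for short times by Remark \ref{invariantly}(3), so $v_k(-\sigma)$ is well defined for small $\sigma>0$. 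The set of $s$ on this backward interval at which $\psi(s,\cdot;\tilde v,\omega_*)$ has a multiple zero is discrete (Lemmas \ref{zero-number}(b)(c)), so we pick $\sigma$ with $\psi(-\sigma,\cdot;\tilde v,\omega_*)$ having only simple zeros; the same approximation gives $z(v_k(-\sigma))\le N_2$, and backward monotonicity $z(v_k)\le z(v_k(-\sigma))\le N_2$ contradicts the hypothesis.

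For (ii), the assumption $I_{n_1}\subset\mathbb{R}^-$ places $W^{n_1,n_2}$ strictly in the stable region, so $\|\Phi_t(v,\omega)\|$ decays exponentially. When $n_2<\infty$, pick $t$ large enough so that $\|\Phi_t(v,\omega)\|<\delta^*_{n_1,n_2}$, apply (i) to $\Phi_t(v,\omega)$, and invoke Lemma \ref{sigma-function}(a). When $n_2=\infty$, extract a normalized limit as $t\to\infty$ by using the exponential separation of Lemma \ref{floquet-bundle}(v) to peel off faster-decaying stable components; this places $\tilde v$ in the finite-dimensional top Floquet bundle inside $V^{n_1,n_1}(\omega_*)$, and Lemma \ref{zero-inva} again yields $z(\tilde v)\ge N_1$. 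For (iii), $0\in I_{n_1}$ identifies $W^{n_1,\infty}$ with the local center-stable manifold; the dimension hypothesis $\dim V^{n_1,n_1}\in\{1,2\}$ (with $\dim V^u$ odd in the 2D case) lets us reduce the normalized limit to the finite-dimensional center $V^c(\omega_*)$ after first using the invariant foliation of Remark \ref{stable-leaf} to strip off the exponentially contracting stable leaf along the forward flow. Then Lemma \ref{zero-inva} supplies $z(\tilde v)\ge N_1$, and Lemma \ref{sigma-function}(a) closes the argument.

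The main obstacle is in (ii)-(iii), where the relevant invariant subspace is infinite-dimensional and the finite-dimensional compactness from (i) is unavailable; the exponential separation in Lemma \ref{floquet-bundle}(v) and the invariant foliation structure in Remark \ref{stable-leaf} are essential to reduce the normalized limit to a compact dominant direction. The parity assumption in (iii) is precisely what makes $N_1$ match the generic even zero count of the 2D Floquet bundles (Lemma \ref{floquet-bundle}), so that no ``wrong-parity'' counterexample can hide in $V^c$. The upper bound in (i) is also subtle, as it requires both the backward-flow invariance of $W^{n_1,n_2}$ for short times (available thanks to $n_2<\infty$) and the generic-$\sigma$ choice avoiding the discrete set of backward multiple-zero times.
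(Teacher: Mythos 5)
Your overall architecture for (i) matches the paper's proof closely: the normalization-and-contradiction argument with the finite-dimensional compactness of $V^{n_1,n_2}(\omega)$, the Duhamel-type convergence $v_k(t)/\|v_k^c\|\to\psi(t,\cdot;\tilde v,\omega_*)$ on a short symmetric time interval using Remark \ref{invariantly}(3), and the choice of forward and backward times at which the limit has only simple zeros so that Lemma \ref{zero-inva} plus Lemma \ref{sigma-function}(a) squeeze the zero count into $[N_1,N_2]$. Likewise your $n_2<\infty$ part of (ii) (flow until $\|\Phi_t v\|$ drops into the ball of radius $\delta^*_{n_1,n_2}$, apply (i), then non-increase of $z$) is exactly the paper's argument via Remark \ref{overflow-invariant}(1).

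Where you diverge is (ii) with $n_2=\infty$. The paper does not build a normalized limit along the stable flow at all; it instead uses the density statement in Remark \ref{invariantly}(2) that $W^{n_1,\infty}(\omega,\delta^*)$ is approximated by the finite-dimensional $W^{n_1,n}(\omega,\delta^*)$, picks a small $t_0>0$ at which $\varphi(t_0,\cdot;u^*,g)-\varphi(t_0,\cdot;u_0,g)$ has only simple zeros, transfers the already-proved inequality $z\geq N_1$ from the approximants, and finishes with monotonicity. This sidesteps the work your route would require: Lemma \ref{floquet-bundle}(v) is a statement about the \emph{linear} cocycle, while $\varphi(t,\cdot;u^*,g)-\varphi(t,\cdot;u_0,g)$ evolves by the \emph{nonlinear} flow on the stable manifold; one would have to show the $o(\|v\|)$ remainder does not spoil the exponential separation before a normalized limit onto a dominant Floquet bundle can be justified. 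Your route is plausible but is more involved than the paper's, and as written the nonlinear-to-linear step is a genuine missing piece.

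For (iii) there is a more substantive gap. You speak of a ``normalized limit'' after stripping the stable leaf ``along the forward flow'', which implicitly sends $t\to\infty$. But $0\in I_{n_1}$ means the center component $u_c(t)$ can have a growing part and so may leave the local center manifold at a \emph{finite} exit time $\tau$; there is then no time-$\infty$ limit to extract. The paper's Lemma \ref{center-foliation} is precisely the tool that handles this: it shows that by taking $\delta_{cs}=\delta^*_{n_1,\infty}$ small, the stable-to-center ratio $\|\varphi(\tau,\cdot;u^*,g)-\varphi(\tau,\cdot;u_c,g)\|/\|\varphi(\tau,\cdot;u_c,g)-\varphi(\tau,\cdot;u_0,g)\|$ is $\leq\epsilon$ at the (possibly finite) exit time $\tau$, and combined with the local-constancy estimate $z(w/\|w\|+v)=N_1$ for $w\in V^{n_1,n_1}(\omega)\setminus\{0\}$, $\|v\|<\delta_c$, one reads off $z(\varphi(\tau,\cdot;u^*,g)-\varphi(\tau,\cdot;u_0,g))=N_1$ at a finite time, then appeals to monotonicity. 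Your sketch cites Remark \ref{stable-leaf} but not Lemma \ref{center-foliation}, and without it the uniform smallness of the ratio at the exit time is not guaranteed. You also omit the necessary case split: when $u^*$ lies on the pure stable leaf $M^{n_1+1,\infty}(\omega,\delta^*)$ through $u_0$ (i.e.\ $u_c=u_0$) there is no nonzero center direction to compare with; the paper handles this case by invoking part (ii) with the shifted index $n_1+1$, giving $z\geq N_1+2>N_1$.
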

\begin{proof}
(i) Suppose that there are some $0\leq n_1\leq n_2< \infty$, some sequences $\delta_n\rightarrow 0$, $\omega_n=(u_n,g_n)$ and $\tilde{u}_n\in M^{n_1,n_2}(\omega_n,\delta_n)\setminus \{u_n\}$ such that $z(\tilde{u}_n(\cdot)-u_n(\cdot))<N_1$ or $z(\tilde{u}_n(\cdot)-u_n(\cdot))>N_2$. Let $v_n(\cdot)=\frac{\tilde{u}_n(\cdot)-u_n(\cdot)}{\|\tilde{u}_n(\cdot)-u_n(\cdot)\|}$. Since $\tilde{u}_n-u_n\in W^{n_1,n_2}(\omega_n,\delta_n)$ and $n_2<\infty$, it follows from Lemma \ref{invari-mani} that $\tilde{u}_n-u_n=v_n^{n_1,n_2}+h^{n_1,n_2}(v_n^{n_1,n_2},\omega_n)$, where $v_n^{n_1,n_2}\in V^{n_1,n_2}(\omega_n)\setminus\{0\}$ with $\|v_n^{n_1,n_2}\|<\delta_n$ and $\|h^{n_1,n_2}(v,\omega_n)\|=o(\|v\|)$. So, one has
\begin{equation}\label{decompose}
v_n(\cdot) = \frac{v_n^{n_1,n_2}+h^{n_1,n_2}(v_n^{n_1,n_2},\omega_n)}{\|v_n^{n_1,n_2}+h^{n_1,n_2}(v_n^{n_1,n_2},\omega_n)\|}\\
\end{equation}
 Note that $n_2<\infty$ and $\Omega$ is compact. Since $V^{n_1,n_2}(\omega)$ are finite-dimensional, we may assume without loss of generality that $\frac{v_n^{n_1,n_2}}{\|v_n^{n_1,n_2}\|}\rightarrow v^*$ and $\omega_n=(u_n,g_n)\rightarrow \omega^*=(u^*,g^*)$ as $n\rightarrow \infty$. Then by \eqref{decompose}, $v_n\rightarrow v^*\in V^{n_1,n_2}(\omega^*)$. Moreover, for $|t|\ll 1$, by Remark \ref{invariantly}(3),
 \begin{equation*}
 \begin{split}
   \frac{\varphi(t,\cdot;\tilde{u}_n,g_n)-\varphi(t,\cdot;u_n,g_n)}{\|\tilde{u}_n-u_n\|} &=\frac{\frac{v(t,v_n^{n_1,n_2})+h^{n_1,n_2}(v(t,v_n^{n_1,n_2}),\omega_n\cdot t)}{\|v(t,v_n^{n_1,n_2})\|}}{\frac{\|v_n^{n_1,n_2}+h^{n_1,n_2}(v_n^{n_1,n_2},\omega_n)\|}{\|v_n^{n_1,n_2}\|}}\cdot\frac{\|v(t,v_n^{n_1,n_2})\|}{\|v_n^{n_1,n_2}\|}\\
   &\rightarrow \frac{v(t,v^*)}{\|v(t,v^*)\|}\cdot \|v(t,v^*)\|
   =v(t,v^*),
  \end{split}
  \end{equation*}
  as $n\rightarrow \infty$, where $v(t,v^*)$ is the solution of \eqref{associate-eq}, with $\omega$ replaced by $\omega^*$, such that $v(0,v^*)=v^*$. By Lemma \ref{zero-inva}, $N_1\leq z(v(t,v^*))\leq N_2$ for $|t|$ sufficiently small. Let $t_2<0<t_1$ and $|t_1|$, $|t_2|$ so  small that $v(t_1,v^*)$, $v(t_2,v^*)$ have only simple zeros. Then $z(\varphi(t_1,\cdot;\tilde{u}_n,g_n)-\varphi(t_1,\cdot;u_n,g_n))=z(v(t_1,v^*))\geq N_1$ and $z(\varphi(t_2,\cdot;\tilde{u}_n,g_n)-\varphi(t_2,\cdot;u_n,g_n))\leq N_2$ for $n$ sufficiently large. Thus, by Lemma \ref{sigma-function}(a), we obtain $N_1\leq z(\tilde{u}_n(\cdot)-u_n(\cdot))\leq N_2$, a contradiction to the definition of $\tilde u_n(\cdot)$ and $u_n(\cdot)$.
  \vskip 2mm

  (ii) We first prove that (ii) is true for any $n_2<\infty$. In fact, when $\delta_0$ is sufficiently small, Remark \ref{overflow-invariant}(1) implies that, for any $n_1\leq n_2<\infty$, $0<\delta^*<\delta_0$ and $u^*\in M^{n_1,n_2}(\omega,\delta^*)\setminus \{u_0\}$, one has $\varphi(t,\cdot;u^*,g)\in M^{n_1,n_2}(\omega\cdot t,\delta_{n_1,n_2}^*)\setminus\{\varphi(t,\cdot;u_0,g)\}$ for $t$ sufficiently positive, where $\delta^*_{n_1,n_2}$ is as defined in (i). Then, by (i), we obtain that $z(\varphi(t,\cdot;u^*,g)-\varphi(t,\cdot;u_0,g))\geq N_1$ for $t$ sufficiently positive. It then follows from Lemma \ref{sigma-function} (a) that $z(u^*(\cdot)-u_0(\cdot))\geq N_1$.\par
We next consider the case $n_2=\infty$. Let $u^*\in M^{n_1,\infty}(\omega,\delta^*)\setminus\{u_0\}$. By Remark \ref{invariantly} (2), there are $u_n\in M^{n_1,n}(\omega,\delta^*)\setminus\{u_0\}$ ($n_1\leq n<\infty$) such that $u_n\rightarrow u^*$ as $n\rightarrow \infty$. Choose a $t_0>0$ so small that $\varphi(t_0,\cdot;u^*,g)-\varphi(t_0,\cdot;u_0,g)$ has only simple zeros. Then $z(\varphi(t_0,\cdot;u^*,g)-\varphi(t_0,\cdot;u_0,g))=z(\varphi(t_0,\cdot;u_n^*,g)-\varphi(t_0,\cdot;u_0,g))\geq N_1$ for $n$ sufficiently large. This implies that $z(u^*(\cdot)-u_0(\cdot))\geq N_1$.
\vskip 2mm

(iii) If $\dim V^{n_1,n_1}(\omega)=1$, or $\dim V^{n_1,n_1}(\omega)=2$ with $\dim V^u(\omega)$ being odd, then Lemma \ref{zero-inva} implies that $z(w(\cdot))=N_1$ for any $w\in V^{n_1,n_1}(\omega)\setminus\{0\}$.\par

 Since $\dim V^{n_1,n_1}(\omega)<\infty$ and $\Omega$ is compact, one can find $\delta_c>0$ such that
 \begin{equation}\label{local-constant}
 z(w(\cdot,\omega))=z(\frac{w(\cdot,\omega)}{\|w(\cdot,\omega)\|}+v(\cdot))=N_1
\end{equation}
for any $w(\cdot,\omega)\in V^{n_1,n_1}(\omega)\setminus\{0\}$, $v\in X$ with $\norm{v}<\delta_c$ and $\omega\in \Omega$.\par
Choose any $\delta^*>0$ be as defined in Lemma \ref{invari-mani}, so that
\begin{equation}\label{invariant-manifold2}
u-u_0=v_0^{n_1,n_1}+h^{n_1,n_1}(v_0^{n_1,n_1},\omega)
\end{equation}
for any $u\in M^{n_1,n_1}(\omega,\delta^*)\setminus\{u_0\}$, where $ v_0^{n_1,n_1}\in V^{n_1,n_1}(\omega)\setminus\{0\}$ and

\begin{equation}\label{higher-order}
  \frac{\|h^{n_1,n_1}(v_0^{n_1,n_1},\omega)\|}{\|v_0^{n_1,n_1}\|}<\frac{\delta_c}{2},\quad \text{for all}\ \norm{v_0^{n_1,n_1}}\leq \delta^*.
\end{equation}
\par

Now note that $M^{n_1,\infty}(\omega,\delta^*)=M^{cs}(\omega,\delta^*)$ and $M^{n_1,n_1}(\omega,\delta^*)=M^c(\omega,\delta^*)$. By Remark \ref{stable-leaf}, one has $M^{n_1,\infty}(\omega,\delta^*)=\cup_{u_c\in M^c(\omega,\delta^*)}\bar{M}_s(u_c,\omega,\delta^*)$.
Moreover, fix any $\delta\in (0,\delta^*_{n_1,n_1})\subset(0,\delta^*)$, it follows from Lemma \ref{center-foliation} that there is some $\delta_{cs}\in(0,\delta)$ such that for any $\omega\in \Omega$,$u_c\in M^c(\omega,\delta_{cs})\setminus\{u_0\}$ and $u^*\in \bar{M}_s(u_c,\omega,\delta_{cs})\setminus\{u_0\}$, the following statements (a)-(c) hold:\par

(a) If $\tau>0$ is such that $\varphi(t,\cdot;u_c,g)\in M^c(\omega\cdot t,\delta)\setminus\{\varphi(t,\cdot;u_0,g)\}$ for $0\leq t<\tau$, then $\varphi(t,\cdot;u^*,g)\in M^{n_1,\infty}(\omega\cdot t,\delta^*)$ for $0\leq t<\tau$.\par
(b) If $\tau>0$ is such that $\varphi(t,\cdot;u_c,g)\in M^c(\omega\cdot t,\delta)\setminus\{\varphi(t,\cdot;u_0,g)\}$ for $0\leq t<\tau$ and $\varphi(\tau,\cdot;u_c,g)\notin M^c(\omega\cdot \tau,\delta)\setminus\{\varphi(\tau,\cdot;u_0,g)\}$ then
\begin{equation}\label{bounded1}
  \frac{\|\varphi(\tau,\cdot;u^*,g)-\varphi(\tau,\cdot;u_c,g)\|}{\|\varphi(\tau,\cdot;u_c,g)-\varphi(\tau,\cdot;u_0,g)\|}\leq\frac{\delta_c}{2}
\end{equation}\par
(c) If for any $t>0$, $\varphi(t,\cdot;u_c,g)\in M^c(\omega\cdot t,\delta)\setminus\{\varphi(t,\cdot;u_0,g)\}$, then
\begin{equation}\label{bounded2}
  \frac{\|\varphi(t,\cdot;u^*,g)-\varphi(t,\cdot;u_c,g)\|}{\|\varphi(t,\cdot;u_c,g)-\varphi(t,\cdot;u_0,g)\|}\leq\frac{\delta_c}{2}
\end{equation}
for $t>0$ sufficiently large. Hereafter, we write $\delta_{cs}$ as $\delta^*_{n_1,\infty}$.\par
Now let $u^*\in M^{n_1,\infty}(\omega,\delta^*_{n_1,\infty})\setminus\{u_0\}$. If $u^*\in M^{n_1+1,\infty}(\omega,\delta^*_{n_1,\infty})\setminus\{u_0\}$, then by (ii) of this lemma, we have $z(u^*(\cdot)-u_0(\cdot))\geq N_1+2>N_1$. If $u^*\in M^{n_1,\infty}(\omega,\delta^*_{n_1,\infty})\setminus M^{n_1+1,\infty}(\omega,\delta^*_{n_1,\infty})$, then by Remark \ref{stable-leaf}, there is a $u_c\in M^{n_1,n_1}(\omega,\delta^*_{n_1,\infty})\setminus\{u_0\}$ such that $u^*\in \bar{M}_s(u_c,\omega,\delta^*_{n_1,\infty})\setminus\{u_0\}$. Therefore, for any $t\geq 0$, whenever $\varphi(t,\cdot;u_c,g)\in M^{n_1,n_1}(\omega\cdot t,\delta)\setminus\{\varphi(t,\cdot;u_0,g)\}$, it follows from \eqref{invariant-manifold2} that
\begin{eqnarray*}
\begin{split}
& \varphi(t,\cdot;u^*,g)-\varphi(t,\cdot;u_0,g) \\
& =\varphi(t,\cdot;u_c,g)-\varphi(t,\cdot;u_0,g)+\varphi(t,\cdot;u^*,g)-\varphi(t,\cdot;u_c,g)\\
& =\psi(t,\cdot;w,\omega)+ h^{n_1,n_1}(\psi(t,\cdot;w,\omega),\omega\cdot t)+\varphi(t,\cdot;u^*,g)-\varphi(t,\cdot;u_c,g) \\
& =\|\psi(t,\cdot;w,\omega)\|\cdot\left\{\frac{\psi(t,\cdot;w,\omega)}{\|\psi(t,\cdot;w,\omega)\|}
 +\frac{h^{n_1,n_1}((\psi(t,\cdot;w,\omega),\omega\cdot t)}{\|\psi(t,\cdot;w,\omega)\|}
+\frac{\varphi(t,\cdot;u^*,g)-\varphi(t,\cdot;u_c,g)}{\|\psi(t,\cdot;w,\omega)\|}\right\}
\end{split}
\end{eqnarray*}
where $\psi(t,\cdot;w,\omega)=c(t)w_{k}(\cdot,\omega\cdot t)+\tilde{c}(t)\tilde{w}_{k}(\cdot,\omega\cdot t)$, for some $k$, with $c(t)$, $\tilde{c}(t)$
being continuous functions. By \eqref{higher-order} and \eqref{bounded1}-\eqref{bounded2},
\[
\frac{\|h^{n_1,n_1}(\psi(\tau,\cdot;w,\omega),\omega\cdot \tau)\|}{\|\psi(\tau,\cdot;w,\omega)\|}+\frac{\|\varphi(\tau,\cdot;u^*,g)-\varphi(\tau,\cdot;u_c,g)\|}{\|\psi(t,\cdot;w,\omega)\|}\leq \delta_c
\]
for some $\tau>0$. Hence, \eqref{local-constant} directly yields that
\[
z(\varphi(\tau,\cdot;u^*,g)-\varphi(\tau,\cdot;u_0,g))= N_1
\]
which implies that $z(u^*(\cdot)-u_0(\cdot))\geq N_1$. Thus, we have proved (iii).
\end{proof}

\begin{corollary}\label{zero-center}
Let $\omega=(u_0,g)\in \Omega$ and
\begin{equation*}
N_u=\left\{
\begin{split}
 &{\dim} V^u(\omega),\,\quad\,\,\,\text{ if }{\rm dim}V^u(\omega)\text{ is even,}\\
 &{\rm dim}V^u(\omega)+1,\,\text{ if }{\rm dim}V^u(\omega)\text{ is odd.}
\end{split}\right.
\end{equation*}
Suppose that  ${\rm dim}V^u(\omega)\ge 1$ and $0\le {\rm dim}V^c(\omega)\le 2$.
Then for $\delta^*>0$ small enough (independent of $\omega\in \Omega$), one has
\begin{itemize}
\item[{\rm (1)}] If ${\rm dim}V^c(\omega)=0$ and ${\rm dim}V^u(\omega)$ is odd, then
\begin{eqnarray*}
\begin{split}
 & z(u(\cdot)-u_0(\cdot))\geq N_u\quad \text{for }u\in M^{s}(\omega,\delta^*)\setminus\{u_0\},\\
 & z(u(\cdot)-u_0(\cdot))\leq N_u-2\quad \text{for }u\in M^{u}(\omega,\delta^*)\setminus\{u_0\}.\\
\end{split}
\end{eqnarray*}
\item[{\rm (2)}] If ${\rm dim}V^c(\omega)=0$ and ${\rm dim}V^u(\omega)$ is even, then
\begin{eqnarray*}
\begin{split}
 & z(u(\cdot)-u_0(\cdot))\geq N_u\quad \text{for }u\in M^{s}(\omega,\delta^*)\setminus\{u_0\},\\
 & z(u(\cdot)-u_0(\cdot))\leq N_u\quad \text{for }u\in M^{u}(\omega,\delta^*)\setminus\{u_0\}.\\
\end{split}
\end{eqnarray*}

\item[{\rm (3a)}] If ${\rm dim}V^c(\omega)=1$, and ${\rm dim}V^u(\omega)$ is even, then
\begin{eqnarray*}
\begin{split}
 & z(u(\cdot)-u_0(\cdot))> N_u\quad \text{for }u\in M^{s}(\omega,\delta^*)\setminus\{u_0\},\\
 & z(u(\cdot)-u_0(\cdot))=N_u\quad \text{for }u\in M^{c}(\omega,\delta^*)\setminus\{u_0\},\\
 & z(u(\cdot)-u_0(\cdot))\leq N_u\quad \text{for }u\in M^{u}(\omega,\delta^*)\setminus\{u_0\}.\\
\end{split}
\end{eqnarray*}

\item[{\rm (3b)}] If ${\rm dim}V^c(\omega)=1$ and ${\rm dim}V^u(\omega)$ is odd, then
\begin{eqnarray*}
\begin{split}
 & z(u(\cdot)-u_0(\cdot))\geq N_u\quad \text{for }u\in M^{cs}(\omega,\delta^*)\setminus\{u_0\},\\
 & z(u(\cdot)-u_0(\cdot))=N_u\quad \text{for }u\in M^{c}(\omega,\delta^*)\setminus\{u_0\},\\
 & z(u(\cdot)-u_0(\cdot))< N_u\quad \text{for }u\in M^{u}(\omega,\delta^*)\setminus\{u_0\}.\\
\end{split}
\end{eqnarray*}

\item[{\rm (4)}] If ${\rm dim}V^c(\omega)=2$ and ${\rm dim}V^u(\omega)$ is odd, then
\begin{eqnarray*}
\begin{split}
 & z(u(\cdot)-u_0(\cdot))\geq N_u\quad \text{for }u\in M^{cs}(\omega,\delta^*)\setminus\{u_0\},\\
 & z(u(\cdot)-u_0(\cdot))=N_u\quad \text{for }u\in M^{c}(\omega,\delta^*)\setminus\{u_0\},\\
 & z(u(\cdot)-u_0(\cdot))\leq N_u\quad \text{for }u\in M^{cu}(\omega,\delta^*)\setminus\{u_0\}.\\
\end{split}
\end{eqnarray*}

\end{itemize}
\end{corollary}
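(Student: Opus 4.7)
The plan is to derive all four cases directly from Lemma \ref{L:lap-number-control-on-mfds} applied to the appropriate index pairs $(n_1,n_2)$, followed by a short parity bookkeeping to match the $N_1,N_2$ of Lemma \ref{zero-inva} against $N_u$. First I would fix the index $n_0$: in cases (1) and (2), where $0\notin\sigma(\Omega)$, choose $n_0$ with $I_{n_0}\subset(0,\infty)$ and $I_{n_0+1}\subset(-\infty,0)$, so that $V^u=V^{0,n_0}$ and $V^s=V^{n_0+1,\infty}$; in cases (3) and (4), where $0\in I_{n_0}$, use $V^u=V^{0,n_0-1}$, $V^c=V^{n_0,n_0}$, $V^{cs}=V^{n_0,\infty}$, $V^{cu}=V^{0,n_0}$, and the corresponding identifications of the local manifolds $M^{\bullet}$ with $M^{n_1,n_2}$.

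Next I would treat the four cases in parallel. For case (1), with $\dim V^u=\dim V^{0,n_0}$ odd, Lemma \ref{zero-inva} gives the upper bound $N_2=\dim V^u-1=N_u-2$ for $V^{0,n_0}$ and the lower bound $N_1=\dim V^u+1=N_u$ for $V^{n_0+1,\infty}$; applying Lemma \ref{L:lap-number-control-on-mfds}(i) to $M^u$ (since $n_2=n_0<\infty$) and Lemma \ref{L:lap-number-control-on-mfds}(ii) to $M^s$ (since $I_{n_0+1}\subset\mathbb{R}^-$) yields the stated bounds. Case (2) is identical except $\dim V^u$ is even, giving $N_1=N_2=N_u=\dim V^u$. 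For cases (3) and (4), the bound on $M^{cs}$ is exactly Lemma \ref{L:lap-number-control-on-mfds}(iii): its hypothesis ``$\dim V^{n_0,n_0}=1$, or $=2$ with $\dim V^u$ odd'' coincides precisely with the hypotheses here, and the $N_1$ it produces equals $\dim V^u$ or $\dim V^u+1$ according as $\dim V^u$ is even or odd, i.e.\ $N_u$ in both cases. The bounds on $M^c$ and $M^{cu}$ come from Lemma \ref{L:lap-number-control-on-mfds}(i) with $(n_1,n_2)=(n_0,n_0)$ and $(0,n_0)$ respectively; in case (3) one checks $\dim V^{0,n_0}=\dim V^u+1$ has parity opposite to $\dim V^u$, so that $N_2=\dim V^{0,n_0}$ if $\dim V^u$ odd and $N_2=\dim V^{0,n_0}-1$ if $\dim V^u$ even, both giving $N_2=N_u$; in case (4) oddness of $\dim V^u$ forces $\dim V^{0,n_0}=\dim V^u+2$ to be odd, so $N_2=\dim V^u+1=N_u$ again. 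Combined with the $M^{cs}$ lower bound, one reads off $z=N_u$ on $M^c$.

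The proof is essentially bookkeeping, so there is no real obstacle beyond keeping the parity computation straight; the delicate input is contained in Lemma \ref{L:lap-number-control-on-mfds}(iii), whose precise hypothesis is tailor-made for cases (3) and (4). The only feature worth flagging is the asymmetry in case (1): because $\dim V^u$ is odd, the bound on $V^{0,n_0}$ drops by one from Lemma \ref{zero-inva}, producing the strict gap $z\le N_u-2$ on $M^u$ while $z\ge N_u$ on $M^s$, and it is this strict inequality that will be exploited in the later application to rule out unstable directions via the zero number.
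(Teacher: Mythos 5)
Your proposal is correct and takes exactly the approach the paper intends; the paper's proof consists of the single line ``It directly follows from Lemma~\ref{L:lap-number-control-on-mfds},'' and you have carried out precisely the parity bookkeeping that this implicit step requires. Each identification of $M^s$, $M^u$, $M^c$, $M^{cs}$, $M^{cu}$ with the appropriate $M^{n_1,n_2}$, and each invocation of parts (i), (ii), (iii) of Lemma~\ref{L:lap-number-control-on-mfds} together with Lemma~\ref{zero-inva}, checks out, including the observation that in case (1) the odd dimension of $V^u$ forces $N_2=\dim V^u-1=N_u-2$ on the unstable manifold.
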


%
\begin{proof}
 It directly follows from Lemma \ref{L:lap-number-control-on-mfds}.
\end{proof}

\vskip 2mm

\begin{lemma}\label{cap-nonempty}
  Suppose that $0\in\sigma(\Omega)$. Then for any $(u_1,g),(u_2,g)\in \Omega$ with $\norm{u_1-u_2}\ll 1$, one has $V^s(u_1,g)\oplus V^{cu}(u_2,g)=X$, and $V^{cs}(u_1,g)\oplus V^u(u_2,g)=X$. Consequently, $(u_1+V^s(u_1,g))\cap (u_2+V^{cu}(u_2,g))\neq \emptyset$, and $(u_1+V^{cs}(u_1,g))\cap (u_2+V^{u}(u_2,g))\neq \emptyset$.

\end{lemma}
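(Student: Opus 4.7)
The plan is to reduce the claim to a perturbation statement for the Sacker--Sell spectral projections, exploiting that $V^{cu}(\omega)$ and $V^u(\omega)$ are finite-dimensional with dimension constant on the compact connected invariant set $\Omega$.

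First I would introduce the spectral projection $P^{cu}(\omega)\in\mathcal L(X)$ satisfying $R(P^{cu}(\omega))=V^{cu}(\omega)$ and $\ker P^{cu}(\omega)=V^s(\omega)$, together with the twin $P^{u}(\omega)$ with $R(P^u(\omega))=V^u(\omega)$ and $\ker P^u(\omega)=V^{cs}(\omega)$. The continuity of $\omega\mapsto P^{cu}(\omega)$ and $\omega\mapsto P^u(\omega)$ in the operator norm on $\mathcal L(X)$ is a standard output of the Sacker--Sell theory for linear skew-product flows (cf.\ \cite{Sacker1978,Chow1994}). Since $\omega_1=(u_1,g)$ and $\omega_2=(u_2,g)$ differ only in the $X$-coordinate, taking $\norm{u_1-u_2}$ small enough makes $\omega_1,\omega_2$ arbitrarily close in $X\times H(f)$, so that $\norm{P^{cu}(\omega_1)-P^{cu}(\omega_2)}$ and $\norm{P^u(\omega_1)-P^u(\omega_2)}$ can be rendered strictly less than $1$.

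Next I would form the operator $L:=P^s(\omega_1)+P^{cu}(\omega_2)=I-(P^{cu}(\omega_1)-P^{cu}(\omega_2))$ on $X$. By the previous step $\norm{L-I}<1$, so $L$ is invertible. By construction $L(X)\subset V^s(\omega_1)+V^{cu}(\omega_2)$, and the surjectivity of $L$ then forces $V^s(\omega_1)+V^{cu}(\omega_2)=X$. To upgrade this to a direct sum, I would invoke a dimension count: since $\Omega$ is compact and connected, $\dim V^{cu}(\omega)$ is a constant finite integer on $\Omega$ equal to $\mathrm{codim}_X V^s(\omega)$. Hence the induced linear map $V^{cu}(\omega_2)\to X/V^s(\omega_1)$ is a surjection between finite-dimensional spaces of the same dimension and must be injective, giving $V^s(\omega_1)\cap V^{cu}(\omega_2)=\{0\}$ and thus the direct sum $V^s(\omega_1)\oplus V^{cu}(\omega_2)=X$.

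An entirely parallel argument with $L':=P^{cs}(\omega_1)+P^u(\omega_2)=I-(P^u(\omega_1)-P^u(\omega_2))$ yields $V^{cs}(\omega_1)\oplus V^u(\omega_2)=X$. The affine consequences follow at once: the condition $(u_1+V^s(u_1,g))\cap(u_2+V^{cu}(u_2,g))\ne\emptyset$ is equivalent to solving $v-w=u_2-u_1$ with $v\in V^s(\omega_1)$ and $w\in V^{cu}(\omega_2)$, which is exactly the surjectivity $V^s(\omega_1)\oplus V^{cu}(\omega_2)=X$; the other intersection statement is obtained by swapping the roles $s\leftrightarrow cs$ and $cu\leftrightarrow u$. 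The main obstacle is really just the first step, namely securing the continuity of $\omega\mapsto P^{cu}(\omega),\,P^u(\omega)$ in the operator topology on $\mathcal L(X)$ over the full compact connected set $\Omega$ (including the constancy of the relevant dimensions); beyond that, everything reduces to a small perturbation of the single-fiber decomposition $X=V^s(\omega)\oplus V^{cu}(\omega)$.
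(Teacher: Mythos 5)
Your proof is correct, and it takes a genuinely different route from the paper's. The paper introduces the continuous projections $\tilde P(\omega)$ with $V^{cu}(\omega)=\tilde P(\omega)X$ and $V^s(\omega)=(I-\tilde P(\omega))X$ as you do, but then argues the \emph{other way around}: it first proves the intersection $V^s(u_1,g)\cap V^{cu}(u_2,g)=\{0\}$ directly, by a compactness/contradiction argument (extracting a normalized sequence $v_n$ in the putative intersection, using finiteness of $\dim V^{cu}$ to pass to a limit $v^*\neq 0$, and using continuity of $\tilde P$ to conclude $v^*\in V^s(u^*,g^*)\cap V^{cu}(u^*,g^*)$, a contradiction); the identity $V^s\oplus V^{cu}=X$ then drops out of $\dim V^{cu}(u_1,g)=\dim V^{cu}(u_2,g)<\infty$. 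You instead first force $V^s(u_1,g)+V^{cu}(u_2,g)=X$ by the Neumann-series invertibility of $L=I-(P^{cu}(\omega_1)-P^{cu}(\omega_2))$, then get trivial intersection from the dimension count. Your version is more quantitative (it produces an explicit smallness threshold, namely $\lVert P^{cu}(\omega_1)-P^{cu}(\omega_2)\rVert<1$) and you spell out the $cs/u$ case and the affine consequence, which the paper leaves to symmetry and a one-line remark. The tradeoff is that your argument needs continuity of $\omega\mapsto P^{cu}(\omega)$ in the \emph{operator norm}, whereas the paper's sequential argument only uses strong continuity plus the finite-dimensionality of $V^{cu}$; since $P^{cu}(\omega)$ has constant finite rank over the compact connected $\Omega$ and the Floquet bundles vary continuously, norm continuity does hold here, but it is worth being explicit that this (together with uniform continuity over the compact $\Omega$, to make the smallness of $\lVert u_1-u_2\rVert$ uniform in $(u_1,g)$) is the ingredient your route leans on.
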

\begin{proof}
 We only prove that $V^s(u_1,g)\oplus V^{cu}(u_2,g)=X$ and $(u_1+V^s(u_1,g))\cap(u_2+V^{cu}(u_2,g))\neq \emptyset$. Let $\tilde P(\omega)$ be projections of \eqref{associate-eq} satisfying: $V^s(\omega)=(I-\tilde P(\omega))X$ and $V^{cu}(\omega)=\tilde P(\omega)X$. Then $\tilde P:\Omega\to L(X,X)$ is continuous. We claim that for any distinct points $(u_1,g),(u_2,g)\in \Omega$ with $\|u_1-u_2\|$ sufficiently small, one has $V^s(u_1,g)\cap V^{cu}(u_2,g)=\{0\}$.

To prove the claim, we assume that for any $n\in \mathbb{N}\setminus\{0\}$, there exist $(u_{1n},g_n), (u_{2n},g_n)\in \Omega$ satisfying $\|u_{1n}-u_{2n}\|\leq \frac{1}{n}$ and $v_n\in V^s(u_{1n},g)\cap  V^{cu}(u_{2n},g)$ with $\|v_n\|=1$. Since $\Omega$ is compact, one can assume that $(u_{1n},g_n)\to (u^*,g^*)$ and $(u_{2n},g_n)\to (u^*,g^*)$ as $n\to \infty$.
The fact that $\dim V^{cu}(\omega)$ is finite, entails that $v_n\to v^*\in V^{cu}(u^*,g^*)\setminus\{0\}$. While on the other hand, the continuity of $\tilde P(\omega)$ in $\omega$ implies that $I-\tilde P(\omega)$ is also continuous with respect to $\omega$. So, $v_n=(I-\tilde P(u_{2n},g_{n}))v_n\to (I-\tilde P(u^*,g^*))v^*$ as $n\to \infty$, that is $v^*\in V^s(u^*,g^*)\cap V^{cu}(u^*,g^*)$, a contradiction. Thus, we have proved our claim.

  Since $\dim V^{cu}(u_1,g)=\dim V^{cu}(u_2,g)<\infty$, $V^s(u_1,g)\oplus V^{cu}(u_1,g)=X$, one has that $V^s(u_1,g)\oplus V^{cu}(u_2,g)=X$. Thus, there is a unique $u_1^s\in  V^s(u_1,g)$ and a unique $u_2^{cu}\in V^{cu}(u_2,g)$ such that $u_1-u_2=u_2^{cu}-u_1^s$, that is $u_1+u_1^s=u_2+u_2^{cu}$. So, $(u_1+V^s(u_1,g))\cap (u_2+V^{cu}(u_2,g))\neq \emptyset$.
\end{proof}

\begin{lemma}\label{cap-nonempty2}
  Suppose that $0\in \sigma(\Omega)$. Then for any $(u_1,y)$, $(u_2,y)\in \Omega$ with $\|u_1-u_2\|\ll 1$, one has that $M^s(u_1,g,\delta^*)\cap M^{cu}(u_2,g,\delta^*)\neq\emptyset$, and $M^{cs}(u_1,g,\delta^*)\cap M^{u}(u_2,g,\delta^*)\neq\emptyset$.
\end{lemma}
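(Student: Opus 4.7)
The plan is to reduce the claim to a fixed-point problem, using the graph representations of the local invariant manifolds from Lemma \ref{invari-mani} combined with the transversality (direct sum) supplied by Lemma \ref{cap-nonempty}. For the first intersection, a common point of $M^s(u_1,g,\delta^*)$ and $M^{cu}(u_2,g,\delta^*)$ corresponds to a pair $(v_1,v_2)\in V^s(u_1,g)\times V^{cu}(u_2,g)$ with $\|v_i\|<\delta^*$ satisfying
$$u_1 + v_1 + h^s(v_1,(u_1,g)) \;=\; u_2 + v_2 + h^{cu}(v_2,(u_2,g)),$$
or equivalently
$$v_1 - v_2 \;=\; (u_2-u_1) + h^{cu}(v_2,(u_2,g)) - h^s(v_1,(u_1,g)).$$
By Lemma \ref{cap-nonempty}, for $\|u_1-u_2\|\ll 1$ one has $X = V^s(u_1,g)\oplus V^{cu}(u_2,g)$, which yields continuous projections $P_s,P_{cu}$ along this splitting. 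Applying $P_s$ and $P_{cu}$ to the displayed identity gives a coupled system
$$v_1 \;=\; P_s(u_2-u_1)+P_s h^{cu}(v_2,(u_2,g))-P_s h^s(v_1,(u_1,g)),$$
$$-v_2 \;=\; P_{cu}(u_2-u_1)+P_{cu}h^{cu}(v_2,(u_2,g))-P_{cu}h^s(v_1,(u_1,g)),$$
which I encode as a single map $\Phi\colon B_r\times B_r \to V^s(u_1,g)\times V^{cu}(u_2,g)$, where $B_r$ denotes the ball of radius $r$ in the corresponding subspace.

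The main task is then to verify that $\Phi$ is a contraction on $B_r\times B_r$ and sends this set into itself, for some $r\in(0,\delta^*)$ and all $(u_1,g),(u_2,g)\in\Omega$ with $\|u_1-u_2\|$ sufficiently small. Two uniformity inputs are needed: first, the operator norms $\|P_s\|,\|P_{cu}\|$ are uniformly bounded as $(u_1,g),(u_2,g)$ vary in $\Omega$ with $\|u_1-u_2\|$ small; this follows from the continuity of the Sacker--Sell projections $\tilde P(\omega)$ on the compact set $\Omega$, exactly as in the proof of Lemma \ref{cap-nonempty}. Second, by Lemma \ref{invari-mani}(i) the functions $h^s(\cdot,\omega)$ and $h^{cu}(\cdot,\omega)$ are $C^1$ with $h(v,\omega)=o(\|v\|)$, so $D_v h(0,\omega)=0$; combined with the $C^1$ dependence of invariant manifolds on $(v,\omega)$ and compactness of $\Omega$, one obtains $r\in(0,\delta^*)$ such that the Lipschitz constants of $h^s(\cdot,\omega),\,h^{cu}(\cdot,\omega)$ on $B_r$ are uniformly bounded by an arbitrarily small $\epsilon>0$. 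Choosing $\epsilon$ small relative to the projection norms produces a contraction constant strictly less than $1$; then shrinking $\|u_1-u_2\|$ controls $\|\Phi(0,0)\|$ so that $\Phi$ preserves $B_r\times B_r$. The Banach fixed-point theorem supplies $(v_1,v_2)$, and this fixed point gives the desired element of $M^s(u_1,g,\delta^*)\cap M^{cu}(u_2,g,\delta^*)$.

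The second intersection $M^{cs}(u_1,g,\delta^*)\cap M^u(u_2,g,\delta^*)\neq\emptyset$ is proved by the same argument with the splitting $V^{cs}(u_1,g)\oplus V^u(u_2,g)=X$ (the other half of Lemma \ref{cap-nonempty}) in place of $V^s\oplus V^{cu}$, and with $h^{cs},h^u$ replacing $h^s,h^{cu}$. The point I expect to require the most care is input (ii) above: the quoted bound $\|\partial h^{n_1,n_2}/\partial v\|\le K_0$ from Lemma \ref{invari-mani} is only global, so the uniform smallness of the Lipschitz constants of $h^s$ and $h^{cu}$ on a small ball around $0$ depends crucially on the tangency $h(v,\omega)=o(\|v\|)$ together with the joint $C^1$ smoothness over the compact base $\Omega$; this is the one ingredient whose uniform version deserves a careful statement before the contraction argument can be invoked.
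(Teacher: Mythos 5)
Your proposal is correct and follows essentially the same route as the paper: both reduce the intersection to a nonlinear equation coming from the graph representations of Lemma~\ref{invari-mani}, linearize it using the transversal splitting $V^s(u_1,g)\oplus V^{cu}(u_2,g)=X$ from Lemma~\ref{cap-nonempty}, and exploit the smallness of $\partial_v h^s,\partial_v h^{cu}$ near $0$. The paper packages the final step as an application of the implicit function theorem to a map $\tilde Q$, while you unroll this into an explicit Banach fixed-point argument; these are the same step in different clothing, and your careful flagging of the uniformity of the small Lipschitz constants of $h^s,h^{cu}$ over the compact base $\Omega$ corresponds to the paper's terser appeal to $|\partial h^{s,cu}(u,\omega_0)|$ being small.
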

\begin{proof}
We only prove that $M^s(u_1,g,\delta^*)\cap M^{cu}(u_2,g,\delta^*)\neq \emptyset$, for $\|u_1-u_2\|\ll 1$.
 By Lemma \ref{invari-mani} and \eqref{gener-inva}, for any $\omega=(u_0,g)\in \Omega$, there are $C^1$ functions $h^s(\cdot,\omega):V^s(\omega)\to V^{cu}(\omega)$, $h^{cu}(\cdot,\omega):V^{cu}(\omega)\to V^{s}(\omega)$ such that $h^s(u,\omega)$, $h^{cu}(u,\omega)=o(\|u\|)$, $\|\partial h^{s,cu}(u,\omega)\|\leq K_0$ for $u\in V^{s,cu}(\omega)$, and
\begin{eqnarray}
\begin{split}
 M^s(\omega,\delta^*)=&\{u_0+u^s+h^s(u^s,u_0,g):u^s\in V^s(\omega)
\cap\{u\in X: \|u\|<\delta^*\}\}\\
 M^{cu}(\omega,\delta^*)=&\{u_0+u^{cu}+h^{cu}(u^{cu},u_0,g):u^{cu}\in V^{cu}(\omega)
\cap\{u\in X: \|u\|<\delta^*\}\}.
\end{split}
\end{eqnarray}
Now, by Lemma \ref{cap-nonempty}, there exists $\delta>0$ such that for any $(u_1,g)$, $(u_2,g)\in \Omega$ with $\norm{u_1-u_2}<\delta$, one has $X=V^s(u_1,g)\oplus V^{cu}(u_2,g)$.
Consider the mapping $Q(u_1,u_2,g):X=V^s(u_1,g)\oplus V^{cu}(u_2,g)\to X: u=u_1^s+u_2^{cu}\mapsto u_2^{cu}-u_1^s $. It is easy to know that $Q(u_1,u_2,g)$ is an isomorphism.
Define $\tilde{Q}(u_1,u_2,g): X=V^s(u_1,g)\oplus V^{cu}(u_2,g)\to X: u=u_1^s+u_2^{cu}\mapsto Q(u_1,u_2,g)(u_1^s+u_2^{cu})+h^{cu}(u_2^{cu},u_2,g)-h^{s}(u_1^{s},u_1,g)$ with $\|u_1^s\|<\delta^*$, $\|u_2^{cu}\|<\delta^*$.
Noticing that $\tilde{Q}(u_1,u_2,g)0=0$ and $|\partial h^{s,cu}(u,\omega_0)| $ are small, let $D_u\tilde{Q}(u_1,u_2,g)v=v_2^{cu}+\partial h_u^{cu}(u_2^{cu},u_2,g)v_2^{cu}-v_1^{s}-\partial h_u^{s}(u_1^{s},u_1,g)v_1^{s}$, here $v_1^s\in V^s(u_1,g)$, $v_2^{cu}\in V^{cu}(u_2,g)$ and $v=v_1^s+v_2^{cu}$.
Then there is a $\delta_1>0$, such that for $\|u_1-u_2\|<\delta_1$, $D_u\tilde{Q}(u_1,u_2,g)$ is a surjective map from $X$ to $X$.
By the implicit function theorem, there is a $\delta_2>0$ such that, for any $\tilde{u}\in X$ with $\|\tilde{u}\|<\delta_2$ and $\|u_1-u_2\|<\delta_1$, one has a unique solution $u=u_1^s+u_2^{cu}$ with $(u_1^s,u_2^{cu})\in (V^s(u_1,g)\cap \{u\in X:\|u\|<\delta^*\},V^{cu}(u_2,g)\cap \{u\in X:\|u\|<\delta^*\})$ satisfying $\tilde{Q}(u_1,u_2,g)u=\tilde{u}$. Particularly, for $\tilde{u}=u_1-u_2$ with $\|u_1-u_2\|<\min\{\delta,\delta_1,\delta_2\}$. That is $u_1+u_1^s+h^s(u_1^s,u_1,g)=u_2+u_2^{cu}+h^{cu}(u_2^{cu},u_2,g)$.
\end{proof}

Hereafter, we will focus on a spatially inhomogeneous minimal set $M\subset X\times H(f)$ for \eqref{equation-lim1}.
Let $\Sigma M:=\{\sigma_au:a\in S^1,u\in M\}$. Since the group $S^1$ is compact and connected, $\Sigma M$ is a connected and compact invariant subset in $X\times H(f)$.
So the Sacker-Sell spectrum, as well as the stable, unstable, center, center-stable, and center-unstable subspaces at each $\omega\in \Sigma M$, are well defined.

\begin{lemma}\label{center-constant}
  Let $M$ be a spatially inhomogeneous minimal set of \eqref{skew-product2}. Assume that
  ${\rm dim}V^c(\omega)= 1$, or ${\rm dim}V^c(\omega)=2$ with ${\rm dim}V^u(\omega)$ being odd. Then $$z(u_1(\cdot)-u_2(\cdot))=N_u, \quad \text{for any }(u_1,g), (u_2,g)\in M \text{ with }u_1\neq u_2,$$ where $N_u$ are as in Corollary \ref{zero-center}.
\end{lemma}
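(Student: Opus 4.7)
I would prove that $z(\varphi(t,\cdot;u_1,g)-\varphi(t,\cdot;u_2,g))\equiv N_u$ on $\mathbb{R}$ for every distinct pair $(u_1,g),(u_2,g)\in M$. The strategy combines a close-pair analysis based on Lemma \ref{cap-nonempty2} and Corollary \ref{zero-center} with a minimality-based transfer using Lemma \ref{sequence-limit}.

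\textbf{Close-pair case.} Let $(\tilde u_1,\tilde g),(\tilde u_2,\tilde g)\in M$ be distinct with $\|\tilde u_1-\tilde u_2\|$ small enough to invoke Lemma \ref{cap-nonempty2}; pick $w\in M^s(\tilde u_1,\tilde g,\delta^*)\cap M^{cu}(\tilde u_2,\tilde g,\delta^*)$ and $w'\in M^{cs}(\tilde u_1,\tilde g,\delta^*)\cap M^u(\tilde u_2,\tilde g,\delta^*)$. The exponential separation of Lemma \ref{floquet-bundle}(v) between $V^s$ and $V^{cu}$ forces $\varphi(t,\cdot;w,\tilde g)-\varphi(t,\cdot;\tilde u_1,\tilde g)$ to vanish in $X\hookrightarrow C^1(S^1)$ exponentially faster than $\varphi(t,\cdot;w,\tilde g)-\varphi(t,\cdot;\tilde u_2,\tilde g)$ as $t\to+\infty$, so at large $t$ where the latter has only simple zeros one has $z(\varphi(t,\cdot;\tilde u_1,\tilde g)-\varphi(t,\cdot;\tilde u_2,\tilde g))=z(\varphi(t,\cdot;w,\tilde g)-\varphi(t,\cdot;\tilde u_2,\tilde g))$. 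Monotonicity of $z$ (Lemma \ref{sigma-function}) together with Corollary \ref{zero-center} then gives this value is $\le z(w-\tilde u_2)\le N_u$. Symmetrically, using $w'$ and the reverse-time dominance of $V^u$ over $V^{cs}$, one obtains $z(\varphi(-s,\cdot;\tilde u_1,\tilde g)-\varphi(-s,\cdot;\tilde u_2,\tilde g))=z(\varphi(-s,\cdot;\tilde u_1,\tilde g)-\varphi(-s,\cdot;w',\tilde g))\ge z(\tilde u_1-w')\ge N_u$ for $s$ large. Thus the forward/backward limits $N_{\infty},N_{-\infty}$ of $z$ along the close-pair orbit satisfy $N_\infty\le N_u\le N_{-\infty}$, and by the constancy established next they coincide with $N_u$.

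\textbf{Constancy and transfer.} For an arbitrary distinct pair $(u_1,g),(u_2,g)\in M$, minimality provides $t_n\to+\infty$ with $\Pi^{t_n}(u_1,g)\to(u_1,g)$ and, after extracting, $\Pi^{t_n}(u_2,g)\to(u_2^*,g)\in M$. If $u_2^*=u_1$, then for $n$ large the pair $(\varphi(t_n,\cdot;u_1,g),\varphi(t_n,\cdot;u_2,g))$ is a distinct close pair in $M$ at base $g\cdot t_n$; the close-pair analysis gives its zero number equals $N_u$, and Lemma \ref{sigma-function}(c) identifies this with the eventual forward value $N_\infty$, whence $z(u_1-u_2)=N_\infty=N_u$. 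If $u_2^*\ne u_1$, Lemma \ref{sequence-limit} gives $z(\varphi(t,\cdot;u_1,g)-\varphi(t,\cdot;u_2^*,g))\equiv N_\infty$ on $\mathbb{R}$; combining this with continuity of $z$ at simple zeros and non-increasing monotonicity rules out any strict drop in $z(\varphi(t,\cdot;u_1,g)-\varphi(t,\cdot;u_2,g))$, so this function is constant $\equiv N_\infty$. Iterating the construction starting from $(u_1,g),(u_2^*,g)$ either terminates in a diagonal collapse (reducing to the first case) or produces successive bi-infinite distinct pairs in $M$ all sharing the same $N_\infty$.

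\textbf{Main obstacle.} The principal hurdle is to guarantee that the iteration in the transfer step eventually yields close distinct pairs in $M$, so that Lemma \ref{cap-nonempty2} can be applied. The natural device is to pass to a minimal subset $K_0$ of the orbit closure in $\{(\alpha,\beta)\in M\times M:p(\alpha)=p(\beta),\ \alpha\ne\beta\}$; since the two coordinate projections of $K_0$ are each onto $M$, either $K_0$ meets every neighborhood of the diagonal (yielding close distinct pairs) or it stays uniformly off the diagonal at some scale $\varepsilon_0>0$, in which case one redoes the close-pair analysis at scale $\varepsilon_0$ by shrinking $\delta^*$ in Lemma \ref{cap-nonempty2} accordingly. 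A secondary subtlety is the backward-time exponential-separation bound used for $N_{-\infty}\ge N_u$, which requires reversing the dichotomy estimates of Lemma \ref{floquet-bundle}(v) and combining them carefully with the backward monotonicity of $z$ from Lemma \ref{sigma-function}.
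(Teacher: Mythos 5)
Your proposal has a genuine gap, and the key device you are missing is the $S^1$-translation action on the minimal set.

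The inequalities your close-pair analysis produces are in the wrong direction to be useful. From $w\in M^s(\tilde u_1,\tilde g,\delta^*)\cap M^{cu}(\tilde u_2,\tilde g,\delta^*)$, forward convergence to $\tilde u_1$ plus the monotonicity of $z$ and Corollary \ref{zero-center} give the forward limit $N_\infty\le z(w-\tilde u_2)\le N_u$; likewise $w'$ gives the backward limit $N_{-\infty}\ge N_u$. But $z$ is non-increasing, so $N_\infty\le N_{-\infty}$ holds automatically, and your two bounds merely sandwich $N_u$ between two possibly different integers — they do not force $N_\infty=N_{-\infty}$, so you cannot conclude $z\equiv N_u$ from them. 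The paper obtains inequalities in the opposite (useful) order: $z\ge N_u$ for large positive $t$ \emph{and} $z\le N_u$ for large negative $t$, which, combined with monotonicity, pin $z$ to the constant $N_u$ on all of $\RR$. To get these bounds the paper does not use Lemma \ref{cap-nonempty2} to intersect stable/unstable manifolds of two different base points; instead it works inside $\Omega=\Sigma M$ and exploits the $S^1$-equivariance of the semiflow. After producing, by minimality, a forward limit pair $(u_1^+,u_2^+)$ with $u_1^+=\sigma_{\tilde a}u_2^+$, it takes $a_*$ close to $\tilde a$ and uses the fact that $\sigma_{a_*}\varphi(t,\cdot;u_2^+,g^+)-\varphi(t,\cdot;u_1^+,g^+)$ is small uniformly \emph{for all} $t$ (by the group action and compactness), so that $\sigma_{a_*}u_2^+\in M^{cs}(u_1^+,g^+,\delta^*)$ and hence $z(u_1^+-\sigma_{a_*}u_2^+)\ge N_u$ by Corollary \ref{zero-center}. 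Moreover, because both orbits lie in the compact invariant $\Sigma M$, Lemma \ref{sequence-limit} applies to give that $z(\varphi(t,\cdot;u_1^+,g^+)-\varphi(t,\cdot;\sigma_a u_2^+,g^+))$ is constant in $t$ — something you cannot get for your auxiliary point $w$, which need not lie in $M$ or $\Sigma M$ and whose orbit need not be recurrent. The backward direction is handled analogously with $M^{cu}$ and a backward limit pair.

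The "Constancy and transfer" step has a circularity: you invoke the close-pair analysis to conclude the close pair's zero number equals $N_u$, but what you actually derived is only $N_\infty\le N_u\le N_{-\infty}$; you then try to use "the constancy established next" to close the loop, but the constancy paragraph in turn presupposes the close-pair conclusion. Furthermore, Lemma \ref{sequence-limit} applied to $(u_1,g),(u_2^*,g)$ tells you about $z(\varphi(t,\cdot;u_1,g)-\varphi(t,\cdot;u_2^*,g))$, which is a different orbit from $z(\varphi(t,\cdot;u_1,g)-\varphi(t,\cdot;u_2,g))$ unless $u_2^*=u_2$, so it does not by itself "rule out any strict drop" of the latter. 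Finally, the fix proposed in the "Main obstacle" paragraph — shrinking $\delta^*$ in Lemma \ref{cap-nonempty2} to accommodate a uniform off-diagonal separation $\varepsilon_0$ — goes the wrong way: shrinking $\delta^*$ makes the closeness requirement \emph{more} restrictive, so it cannot handle pairs that stay a fixed distance apart. The paper sidesteps all of these difficulties precisely because translates $\sigma_a u$ of a point of $M$ supply arbitrarily close distinct elements of $\Sigma M$ whenever $M$ is spatially inhomogeneous.
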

\begin{proof}
Fix $(u_1,g)$, $(u_2,g)\in M$.  We first claim that there is a sequence $t_n\to\infty$  such that $g\cdot t_n\to g^+$ and
\begin{equation}\label{asym-rota}
\varphi(t_n,\cdot;u_1,g)\to u_1^+,\,\,\, \varphi(t_n,\cdot;u_2,g)\to u_2^+,
\end{equation}
with $u_1^+\in \Sigma u_2^+$.
In fact,  by taking a sequence $t_n\to \infty$, we may assume that $g\cdot t_n\to g^+$ and
$\varphi(t_n,\cdot;u_1,g)\to u_1^+,\,\,\, \varphi(t_n,\cdot;u_2,g)\to u_2^+.$
If $u_1^+\in\Sigma u_2^+$, then  the claim holds.
If $u_1^+\not\in \Sigma u_2^+$, then by Lemma \ref{sequence-limit} and the connectivity of $S^1$,
 there is an integer $\tilde N$ such that
$$
z(\varphi(t,\cdot;u_1^+,g^+)-\varphi(t,\cdot;\sigma_a u_2^+,g^+))=\tilde N,
\quad \text{for all }t\in \RR \text{ and }a\in S^1.
$$
By the compactness of $S^1$, one can find a $T_0>0$ such that
$$
z(\varphi(t,\cdot;u_1,g)-\varphi(t,\cdot;\sigma_a u_2,g))=\tilde N,
\quad \text{for all }t\ge T_0 \text{ and }a\in S^1.
$$
As a consequence,
$$
m_1(t):=\max _{x\in S^1} \varphi(t,x;u_1,g)\not = m_2(t):= \max_{x\in S^1}\varphi(t,x;u_2,g),\quad \text{for all }\,t\ge T_0.
$$
In fact, suppose on the contrary that $m_1(t)=m_2(t)$ for some $t\geq T_0$. Take $x_1,x_2\in S^1$ such that $m_1(t)=\varphi(t,x_1;u_1,g)$ and $m_2(t)=\varphi(t,x_2;u_2,g)$. Put $b:=x_2-x_1$. Thus, $\varphi(t,x_1;\sigma_bu_2,g)-\varphi(t,x_1;u_1,g)=0$, so $x_1$ is a multiple zero of $\varphi(t,\cdot;\sigma_bu_2,g)-\varphi(t,\cdot;u_1,g)$, a contradiction to Lemma \ref{sigma-function}(b).

Without loss of generality, we may assume that
$m_1(t)>m_2(t)$ for all $t\ge T_0$. For the above $g^+$, let $m^+=\max\{\max_{x\in S^1} v(x):(v,g^+)\in M\}$ and $u_2^{++}$ be such that
$(u_2^{++},g^+)\in M$ with $\max_{x\in S^1}u_2^{++}(x)=m^+$. Since $M$ is minimal, one can take another sequence $t_n^+\to\infty$ such that
$$
(\varphi(t_n^+,\cdot;u_2,g),g\cdot t_n^+)\to (u_2^{++}(\cdot), g^+).
$$
Without loss of generality, we assume that $\varphi(t_n^+,\cdot;u_1,g)\to u_1^{++}(\cdot).$
Then, by the definition of $m^+$, we must have
\begin{equation}\label{max-equal}
\max_{x\in S^1}u_1^{++}(x)=\max_{x\in S^1}u_2^{++}(x).
\end{equation}
Suppose that $u_1^{++}\not \in  \Sigma_a u_2^{++}$. Then, again by Lemma \ref{sequence-limit} and the connectivity of $S^1$,
there is $\tilde N^+$ such that
$$
z(\varphi(t,\cdot;u_1^{++},g^+)-\varphi(t,\cdot;\sigma_a u_2^{++},g^+))=\tilde N^+,\quad \forall\,\, t\in\RR,\,\, a\in S^1.
$$
This contradicts \eqref{max-equal}. Hence,
$u_1^{++}\in\Sigma u_2^{++}$
and the claim is proved.

Together with the claim and the connectivity of $(S^1\times S^1)\setminus \{(a,a): a\in S^1\}$, it follows from Lemma \ref{sequence-limit} and \eqref{asym-rota} that there is a constant $C$ such that
$$
z(\varphi(t,\cdot;\sigma_b u_1^{+},g^+)-\varphi(t,\cdot;\sigma_a u_2^{+},g^+))\equiv C,
$$ whenever $t\in \RR$ and $a,b\in S^1$ with $\sigma_bu_1^{+}\not =\sigma_a u_2^{+}$.  In particular,
\begin{equation}\label{rot-consta-u++}
z(\varphi(t,\cdot;u_1^{+},g^+)-\varphi(t,\cdot;\sigma_a u_2^{+},g^+))\equiv C,
\end{equation} for any $t\in \R$, and $a\in S^1$ with $u_1^{+}\not =\sigma_a u_2^{+}$. Recall that $u_1^+=\sigma_{\tilde{a}} u_2^+$ for some $\tilde{a}\in S^1$, the spatial inhomogeneity of $M$ enables us to find some $a_*\in S^1$ sufficiently close to $\tilde{a}$ such that $u_1^+\ne\sigma_{a_*} u_2^+$; and moreover, due to the translation-group action $\sigma$ on the semiflow, as well as the compactness and invariance of $M$, one has
$\norm{\varphi(t,\cdot;\sigma_{a_*} u_2^{+},g^+)-\varphi(t,\cdot;u_1^{+},g^+)}$ is sufficiently small uniformly for all $t\in \R.$
This then implies that $\sigma_{a_*} u_2^+\in M^{cs}(u_1^+,g^+,\delta^*).$ Therefore, by virtue of Corollary \ref{zero-center}, one obtains that $$z(u_1^+-\sigma_{a_*}u_2^+)\ge N_u.$$ Together with \eqref{rot-consta-u++}, we have $C\ge N_u$. So
\begin{equation}\label{rot-consta-u-n}
z(\varphi(t,\cdot;u_1^{+},g^+)-\varphi(t,\cdot;\sigma_a u_2^{+},g^+))\ge N_u,
\end{equation}
for all $t\in \RR$ and $a\in S^1$ with $u_1^{+}\not =\sigma_a u_2^{+}$.

We now will show that
\begin{equation}\label{Nu-positve-direc}
z(\varphi(t,\cdot;u_1,g)-\varphi(t,\cdot;u_2,g))\ge N_u, \quad \text{for } t\ge 0 \text{ sufficiently large.}
\end{equation}
In fact, we note \eqref{asym-rota}. If $u_1^+\ne u_2^+$, then \eqref{rot-consta-u-n} entails that $z(u_1^+(\cdot)-u_2^+(\cdot))\ge N_u$ and $u_1^+(\cdot)-u_2^+(\cdot)$ can only have simple zeros; and hence, we have already obtained that $z(\varphi(t,\cdot;u_1,g)-\varphi(t,\cdot;u_2,g))\ge N_u$ for $t\gg 1.$ If $u_1^+=u_2^+$, it then follows from Lemma \ref{sigma-function}(c) that there is an integer $N_0$ such that
$z(\varphi(t,\cdot;u_1,g)-\varphi(t,\cdot;u_2,g))=N_0$ for all $t$ sufficiently positive.
Fix an $n_0\gg 1$ in \eqref{asym-rota}, one can obtain a neighborhood $B(e)$ of the unit $e$ in the group $S^1$ such that $z(\varphi(t_{n_0},\cdot;u_1,g)-\varphi(t_{n_0},\cdot;\sigma_au_2,g))=N_0$ for any $a\in B(e)$. So, by Lemma \ref{sigma-function}(c) again, we have $N_0\ge z(\varphi(t,\cdot;u_1,g)-\varphi(t,\cdot;\sigma_au_2,g))$ for any $t\ge t_{n_0}$ and any $a\in B(e)$. Note that there is at least some $a_0\in B(e)$ such that $u_1^+\ne \sigma_{a_0}u_2^+$ (Otherwise, $\sigma_au_2^+=u_1^+=u_2^+$ for any $a\in B(e)$, which implies that $u_2^+$ is spatially homogenous, a contradiction).
Then, again by Lemma \ref{sequence-limit} and \eqref{asym-rota}, one obtains that $z(\varphi(t,\cdot;u_1^{+},g^+)-\varphi(t,\cdot;\sigma_{a_0} u_2^{+},g^+))\equiv \text{constant}\le N_0.$ Combining with \eqref{rot-consta-u-n}, we obtain that $N_0\ge N_u$. Thus, we have proved \eqref{Nu-positve-direc}.

Next, we will prove that
\begin{equation}\label{Nu-negative-direc}
z(\varphi(t,\cdot;u_1,g)-\varphi(t,\cdot;u_2,g))\le N_u,\quad \text{for } t \text{ sufficiently negative.}
\end{equation}
If $\norm{\varphi(t,\cdot;u_1,g)-\varphi(t,\cdot;u_2,g)}\to 0$ as $t\to -\infty$, then $\varphi(t,\cdot;u_2,g)\in M^{cu}(\varphi(t,\cdot;u_1,g),g\cdot t,\delta^*)$ for all $t$ sufficiently negative. By Corollary \ref{zero-center}(2)-(4), it follows that \eqref{Nu-negative-direc} holds already. Hereafter, we assume that
\begin{equation}\label{negative-not-asy}
\norm{\varphi(t,\cdot;u_1,g)-\varphi(t,\cdot;u_2,g)}\nrightarrow 0\,\, \text{ as }\,t\to -\infty.
\end{equation}
Similarly as in the claim above, one may obtain a sequence $s_n\to -\infty$  such that $g\cdot s_n\to g^-$ and
\begin{equation}\label{asym-rota-negative}
\varphi(s_n,\cdot;u_1,g)\to u_1^-,\,\,\, \varphi(s_n,\cdot;u_2,g)\to u_2^-,
\end{equation}
with $u_1^-\in\Sigma u_2^-$. Moreover, by repeating similar argument as above, we can utilize
the estimate of the zero-number $z$ on  $M^{cu}(\omega,\delta^*)$ in Corollary \ref{zero-center}(3)-(4) to obtain
\begin{equation}\label{rot-consta-u-n-nega}
z(\varphi(t,\cdot;u_1^{-},g^-)-\varphi(t,\cdot;\sigma_a u_2^{-},g^-))\le N_u,
\end{equation}
for all $t\in \RR$ and $a\in S^1$ with $u_1^{-}\not =\sigma_a u_2^{-}$.

So, if $u_1^-\ne u_2^-$, then \eqref{rot-consta-u-n-nega} entails that $z(u_1^-(\cdot)-u_2^-(\cdot))\le N_u$ and $u_1^-(\cdot)-u_2^-(\cdot)$ can only have simple zeros; and hence, by \eqref{asym-rota-negative} we have already obtained that $z(\varphi(t,\cdot;u_1,g)-\varphi(t,\cdot;u_2,g))\le N_u$ for $t$ sufficiently negative. For the case $u_1^-= u_2^-$, recall that $\norm{\varphi(t,\cdot;u_1,g)-\varphi(t,\cdot;u_2,g)}\nrightarrow 0$ as $t\to -\infty$ in \eqref{negative-not-asy}. Since $M$ is compact, there is a sequence $\tau_n\to -\infty$ such that $\Pi^{\tau_n}(u_i,g)\to (u^*_i,g^*)\in M$ ($i=1,2$) as $n\to\infty$ with $u^*_1\neq u^*_2$. By Lemma \ref{sequence-limit}, there is $N_0$ such that $z(\varphi(t,\cdot;u^*_1,g^*)-\varphi(t,\cdot;u^*_2,g^*))=N_0$ for all $t\in\mathbb{R}$, in particular $z(u^*_1-u^*_2)=N_0$. It follows from Lemma \ref{sigma-function} that $z(\varphi(t,\cdot;u_1,g)-\varphi(t,\cdot;u_2,g))=N_0$ for all $t$ sufficiently negative. Hence, one can repeat the similar argument immediately after \eqref{Nu-positve-direc} to obtain that $N_0\le N_u$. Thus, we have  proved \eqref{Nu-negative-direc}.

By virtue of \eqref{Nu-positve-direc} and \eqref{Nu-negative-direc}, we have obtained
$$
z(\varphi(t,\cdot;u_1,g)-\varphi(t,\cdot;u_2,g))=N_u,\quad\forall\,\, t\in\RR,
$$ which completes the proof.
\end{proof}

For all $u\in X$, we write $m(u)$ as the maximum of $u$ on $S^1$, and define an equivalence relation on $X$ by putting $u \sim v$ if and only if $u=\sigma_a v$ for some $a \in S^1$. The equivalence class is denoted by $[u]$. Then we have the following very useful Corollary.
\begin{corollary}\label{translate-equivalence-1}
 Let $M$ be a minimal set of \eqref{skew-product2}. Assume that $M$ is spatially inhomogeneous and
  ${\rm dim}V^c(\omega)= 1$, or ${\rm dim}V^c(\omega)=2$ with ${\rm dim}V^u(\omega)$ being odd. Then, for any $g\in H(f)$ and any two elements $(u_1,g),(u_2,g)$ in $M\cap p^{-1}(g)$, one has:
  \begin{itemize}
    \item [\rm{(i)}]
    $
    z(\varphi(t,\cdot;u_1,g)-\varphi(t,\cdot;\sigma_a u_2,g))=N_u
    $, for any $a\in S^1$  with $\sigma_a u_2\not= u_1$.
  \item[\rm{(ii)}]If $m(u_1)<m(u_2)$, then
$m(\varphi(t,\cdot;u_1,g))<m(\varphi(t,\cdot; u_2,g)),\  for\ all\ t\in \mathbb{R}$.
\item[\rm{(iii)}]
$m(u_1)=m(u_2)$ $\Leftrightarrow$ $([u_1],g)=([u_2],g)$.
  \end{itemize}
\end{corollary}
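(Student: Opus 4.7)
My plan is to deduce part (i) by running the proof of Lemma \ref{center-constant} on the pair $(u_1,g)\in M$ and $(\sigma_a u_2,g)\in\sigma_a M$, and then to obtain parts (ii) and (iii) as almost formal consequences of (i) via the zero-number drop forced by a common maximum at a coincident spatial point. The key observation that unlocks (i) is that $\sigma_a$ commutes with $\Pi^t$, so $\sigma_a M$ is itself a spatially inhomogeneous minimal set whose Sacker-Sell spectrum coincides with that of $M$ and whose center and unstable bundles have the same dimensions; in particular the standing hypothesis on $V^c$ and $V^u$ is preserved and Corollary \ref{zero-center} applies uniformly over $M\cup\sigma_a M$.

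For (i), I would pick a sequence $t_n\to +\infty$ with $\Pi^{t_n}(u_1,g)\to (u_1^+,g^+)\in M$ and, after a subsequence, $\Pi^{t_n}(u_2,g)\to (u_2^+,g^+)\in M$, so that $\Pi^{t_n}(\sigma_a u_2,g)\to (\sigma_a u_2^+,g^+)\in\sigma_a M$. The first subclaim is $u_1^+\in\Sigma u_2^+$. If not, Lemma \ref{sequence-limit}, the finiteness of drops of $z$ from Lemma \ref{sigma-function}(c), and the connectivity and compactness of $S^1$ force $z(\varphi(t,\cdot;u_1^+,g^+)-\varphi(t,\cdot;\sigma_c u_2^+,g^+))$ to be independent of $(t,c)\in\R\times S^1$; I would then extract a sequence $t_n^+\to\infty$ along which $m(\varphi(t_n^+,\cdot;u_2,g))\to m^+:=\max\{\max_x v(x):(v,g^+)\in M\}$ by minimality, observe that $m(\varphi(t_n^+,\cdot;u_1,g))$ is forced to tend to $m^+$ as well (from the WLOG strict comparison and the fact that the limit $u_1^{++}$ lies in $M$), and copy the contradiction of Lemma \ref{center-constant} verbatim. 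Once $u_1^+=\sigma_{\tilde b}u_2^+$ is established, I would slide $b$ slightly off $\tilde b$ using spatial inhomogeneity so that $\sigma_b u_2^+$ lies in a small punctured neighborhood of $u_1^+$, hence in the local center-stable manifold $M^{cs}((u_1^+,g^+),\delta^*)$; Corollary \ref{zero-center}(3)--(4) gives $z(u_1^+-\sigma_b u_2^+)\geq N_u$, and the connectivity-constancy in $c$ propagates this to $c=a$ and, via Lemma \ref{sigma-function}(a), to all sufficiently large $t$. A symmetric $\alpha$-limit argument using the center-unstable manifold yields the matching upper bound $z\leq N_u$ for $t$ sufficiently negative, and the monotone non-increase of $z$ pinches $z\equiv N_u$ on all of $\R$.

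Parts (ii) and (iii) then follow from (i) by the same double-zero trick. For (ii), if $m(u_1)<m(u_2)$ but the inequality fails at some $t^*\in\R$, continuity picks out a first violation where equality holds; letting $x_i$ realize the maximum of $\varphi(t^*,\cdot;u_i,g)$ and setting $a=x_2-x_1$ makes $\sigma_a\varphi(t^*,\cdot;u_2,g)$ attain its maximum at $x_1$ with the same value, so $\varphi(t^*,\cdot;u_1,g)-\varphi(t^*,\cdot;\sigma_a u_2,g)$ acquires a double zero at $x_1$; since $\sigma_a u_2\ne u_1$ at $t=0$ (their maxima differ), part (i) gives $z\equiv N_u$, contradicting the strict drop forced by Lemma \ref{sigma-function}(b) at $t^*$. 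Part (iii) uses the same multiple-zero construction at $t=0$ to rule out $[u_1]\ne[u_2]$ when $m(u_1)=m(u_2)$, the converse direction being trivial because $\sigma_a$ preserves maxima. The main technical obstacle I foresee is the faithful adaptation of the max-equalization step of Lemma \ref{center-constant} to orbits living in two distinct minimal sets $M$ and $\sigma_a M$; what rescues the argument is precisely that $\sigma_a$ acts isometrically and preserves $\max_x$, so the fiberwise maximum value $m^+$ on $M$ coincides with that on $\sigma_a M$ and the pigeonhole step goes through unchanged.
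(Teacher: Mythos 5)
Your proposal follows the paper's approach essentially verbatim: the paper disposes of (i) with the single remark that it follows by repeating the argument of Lemma~\ref{center-constant} (you flesh this out, correctly noting that $\sigma_a M$ inherits the spectral hypotheses and that $\sigma_a$ preserves fiberwise maxima, so the max-equalization step is unaffected), and your proofs of (ii) and (iii) are exactly the intermediate-value plus multiple-zero-drop argument and the $t=0$ multiple-zero argument the paper uses.
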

\begin{proof}
(i) can be obtained by repeating the same arguments as in Lemma \ref{center-constant}.

(ii) Since $m(u_1)<m(u_2)$, it follows that $u_1\neq \sigma_a u_2$ for any $a\in S^1$. By virtue of (i),
\begin{equation}\label{tran-constant}
 z(\varphi(t,\cdot;u_1,g)-\varphi(t,\cdot;\sigma_a u_2,g))=N_u,
\end{equation}
for any $a\in S^1$ and $t\in \mathbb{R}$. Suppose that there is a $t_0>0$ (resp. $t_0<0$), such that
$m(u(t_0,\cdot;u_1,g))\geq m(u(t_0,\cdot; u_2,g)).$
Let $K(t)=m(\varphi(t,\cdot;u_1,g))-m(\varphi(t,\cdot; u_2,g))$, $t\in \mathbb{R}$. Then $K(t)$ is continuous for $t\in \mathbb{R}$. Moreover, one has $K(0)<0$ and $K(t_0)\geq 0$. So one can find a $t_1\in (0,t_0]$ (resp. $t_1\in [t_0,0)$) such that $m(\varphi(t_1,\cdot;u_1,g))=m(\varphi(t_1,\cdot;u_2,g))$. Hence, there exist some $a_0 \in S^1$ and $x_0\in S^1$ such that $\varphi(t_1,x_0;u_1,g)=m(\varphi(t_1,\cdot;u_1,g))=\varphi(t_1,x_0+a_0;u_2,g)
=\varphi(t_1,x_0;\sigma_{a_0}u_2,g)$ and $\varphi_x(t_1,x_0;u_1,g)=0=\varphi_x(t_1,x_0;\sigma_{a_0}u_2,g)$. Then Lemma \ref{sigma-function}(b) implies that $z(\varphi(t,\cdot;u_1,g)-\varphi(t,\cdot;\sigma_a u_2,g))$ must drop at $t=t_1$, contradicting  \eqref{tran-constant}.

(iii) Clearly, $([u_1],g)=([u_2],g)$ implies $m(u_1)=m(u_2)$. If $m(u_1)=m(u_2)$, then there exists some $a_0 \in S^1$ such that $u_1-\sigma_{a_0}u_2$ possesses a multiple zero. So, we must have
$u_1=\sigma_{a_0}u_2$ (Otherwise, by (i), $u_1\ne \sigma_{a_0}u_2$ will imply that $z(\varphi(t,\cdot;u_1,g)-\varphi(t,\cdot;\sigma_{a_0} u_2,g))=N_u$ for all $t$; and hence, $u_1-\sigma_{a_0}u_2$ possesses only simple zeros, a contradiction). Thus, $([u_1],g)=([u_2],g)$.
We have completed the proof.
\end{proof}

Now we define by $\tilde{X}$ the quotient space $``X/\sim"$ of $X$. Then $\tilde{X}$ is a metric space with metric $\tilde{d}_{\widetilde{X}}$ defined as $\tilde{d}_{\tilde{X}}([u],[v]):=d_H(\Sigma u,\Sigma v)$ for any $[u],[v]\in \tilde{X}$. Here $d_H(U,V)$ is the Hausdorff metric of the compact subsets $U,V$ in $X$, defined as $d_{H}(U,V)=\sup\{\sup_{u \in U} \inf_{v \in V} d_X(u,v),$ $\, \sup_{v \in V} \inf_{u \in U} d_X(u,v)\}$ where the metric $d_X(u,v)=||u-v||_{X}$. Observe that $\tilde d_{\tilde X}([u],[v])=\inf_{b\in S^1}d_X(u,\sigma_b v)$. Indeed, for a fixed $a\in S^1$, $\inf_{b\in S^1}d_{X}(\sigma_a u,\sigma_b v)=\inf_{b\in S^1}d_{X}(u,\sigma_{b-a} v)=\inf_{b\in S^1}d_{X}(u,\sigma_b v)$. It is clear that $d_X$ satisfies the $S^1$-translation invariance that is $d_X(\sigma_a u,\sigma_a v)=d_X(u,v)$ for any $u,v\in X$ and any $a \in S^1$. If we denote the metric on $H(f)$ by $d_Y$, one has the product metric $d$ on $X\times H(f)$ by setting $d((u_1,g_1),(u_2,g_2))=d_X(u_1,u_2)+d_Y(g_1,g_2)$ for any two points $(u_1,g_1),(u_2,g_2)\in X\times H(f)$, then $\tilde{d}$ is the induced metric on $\tilde{X}\times H(f)$ defined as $\tilde{d}(([u],g_1),([v],g_2))=\tilde{d}_{\tilde X}([u],[v])+d_Y(g_1,g_2)$. For any subset $K\subset X\times H(f)$, we write $\tilde{K}=\{([u],g)\in \tilde{X}\times H(f)|(u,g)\in K\}$.

By virtue of Corollary \ref{translate-equivalence-1}, we consider the induced mapping $\tilde{\Pi}^t$, $t\geq 0$, as follows:
\begin{align}\label{induced-skepro-semiflow}
\begin{split}
\tilde{\Pi}^t:\tilde{X}\times H(f)&\rightarrow \tilde{X}\times H(f);\\
([u],g)&\rightarrow (\tilde{\varphi}(t,\cdot;[u],g),g\cdot t):=([\varphi(t,\cdot;u,g)],g\cdot t).
\end{split}
\end{align}

\begin{lemma}\label{induced-skewproduct}
\begin{itemize}
\item [\rm{(i)}] $\tilde{\Pi}^t$ is a skew-product semiflow on $\tilde{X}\times H(f)$;
\item [\rm{(ii)}] If $M$ is a minimal invariant subset in $X\times H(f)$ w.r.t. $\Pi^t$, then $\tilde{M}$ is a minimal invariant subset in $\tilde{X}\times H(f)$ w.r.t. $\tilde{\Pi}^t$.
\end{itemize}
\end{lemma}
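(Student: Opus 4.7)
The plan is to verify parts (i) and (ii) by leveraging the $S^1$-equivariance of the semiflow $\Pi^t$ that is intrinsic to the spatially homogeneous nonlinearity $f=f(t,u,u_x)$.

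For part (i), I would proceed in three steps. First, I would check that $\tilde\Pi^t$ is well-defined, i.e.\ independent of the representative. Because $f$ does not depend on $x$, equation \eqref{equation-lim1} is invariant under the spatial shift $\sigma_a$, so by uniqueness of solutions we have the equivariance identity
\begin{equation*}
\varphi(t,\cdot\,;\sigma_a u,g)=\sigma_a\varphi(t,\cdot\,;u,g)\qquad \text{for all }a\in S^1,\;t\ge 0,
\end{equation*}
which shows that $u\sim v$ implies $\varphi(t,\cdot\,;u,g)\sim\varphi(t,\cdot\,;v,g)$, hence $[\varphi(t,\cdot\,;u,g)]$ depends only on the class $[u]$. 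Second, the cocycle property for $\tilde\varphi$ and the identity $\tilde\Pi^0=\mathrm{Id}$ inherit directly from the corresponding properties of $\Pi^t$, since the quotient map commutes with $\Pi^t$. Third, I would verify continuity: the quotient map $q:X\to\widetilde X$, $u\mapsto [u]$, is continuous because if $u_n\to u$ in $X$ then $d_X(\sigma_a u_n,\sigma_a u)=d_X(u_n,u)\to 0$ uniformly in $a\in S^1$, which forces $d_H(\Sigma u_n,\Sigma u)\to 0$. Combining continuity of $q$ with continuity of $\Pi^t$ on $X\times H(f)$ yields continuity of $\tilde\Pi^t$ on $\widetilde X\times H(f)$.

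For part (ii), note that $\tilde M=(q\times\mathrm{Id})(M)$, so $\tilde M$ is compact as the continuous image of the compact set $M$. Invariance follows from
\begin{equation*}
\tilde\Pi^t(\tilde M)=(q\times\mathrm{Id})(\Pi^t(M))=(q\times\mathrm{Id})(M)=\tilde M.
\end{equation*}
For minimality, I would fix any $([u],g)\in\tilde M$ and use that $(u,g)\in M$ has a dense forward orbit in $M$. Given any $([u'],g')\in\tilde M$ with representative $(u',g')\in M$, there is a sequence $t_n\to\infty$ with $\Pi^{t_n}(u,g)\to(u',g')$; applying $q\times\mathrm{Id}$ gives $\tilde\Pi^{t_n}([u],g)\to([u'],g')$. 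Hence the forward orbit of $([u],g)$ is dense in $\tilde M$, so $\tilde M$ is minimal.

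The only substantive technical point is the continuity of $q$ with respect to the Hausdorff metric; everything else is a routine transfer of properties through the quotient, made possible by the $S^1$-equivariance guaranteed in the spatially homogeneous setting. I do not anticipate any serious obstacle, since the groundwork laid in Corollary \ref{translate-equivalence-1} and the preceding translation-invariance discussion already ensures the induced map $\tilde\Pi^t$ is coherent on $\widetilde X$.
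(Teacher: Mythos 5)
Your argument is correct and follows essentially the same route as the paper's proof: translation-invariance of $d_X$ gives continuity of the quotient map, and continuity of $\tilde\Pi^t$ and minimality of $\tilde M$ are then transferred from $\Pi^t$ and $M$. One point worth spelling out is that ``combining continuity of $q$ with continuity of $\Pi^t$'' does not by itself give continuity of $\tilde\Pi^t$ (it only gives continuity of $(q\times\mathrm{Id})\circ\Pi^t$); as in the paper, one should lift a convergent net $([u_n],g_n)\to([u],g)$ in $\tilde X\times H(f)$ to a convergent net of representatives $(\sigma_{a_n}u_n,g_n)\to(u,g)$ in $X\times H(f)$ --- which your translation-invariance observation indeed makes possible --- and your version of the argument also nicely sidesteps the paper's appeal to the Chernoff--Marsden theorem for reducing joint to separate continuity.
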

\begin{proof}
(i). For any $([u],g)\in \tilde{X}\times H(f)$ and any $t,s\geq 0$,
\begin{equation*}
\begin{split}
\tilde{\Pi}^{t+s}([u],g)&=([\varphi(t+s,\cdot;u,g)],g\cdot(t+s))=([\varphi(s,\cdot;\varphi(t,\cdot;u,g),g\cdot t)],g\cdot(t+s))\\
&=(\tilde{\varphi}(s,\cdot;[\varphi(t,\cdot;u,g],g\cdot t),(g\cdot t)\cdot s)=(\tilde{\varphi}(s,\cdot;\tilde{\Pi}^{t}([u],g)),(g\cdot t)\cdot s)\\
&=\tilde{\Pi}^{s}(\tilde{\Pi}^t([u],g)).
\end{split}
\end{equation*} So, $\tilde{\Pi}^t$ admits the cocycle property.
Since $\mathbb{R}^+$ is a Baire space and $\tilde{X}\times H(f)$ is metrizable,  it suffices to verify (see e.g., \cite{chernoff1970continuity} or \cite[Theorem 1]{Stephan}) the continuity of $\tilde{\Pi}$  with respect to each variable. Here we only show the continuity in $([u],g)\in\tilde{X}\times H(f)$. The continuity with respect to $t$ can be proved similarly. For any fixed $t\ge 0$, since $\Pi^t$ is continuous on $X\times H(f)$, it follows that for any $(u_0,g_0)$ and any $\varepsilon >0$, there exists a $\delta >0$ such that $ d(\Pi^t(u_0,g_0),\Pi^t(u,g))<\varepsilon$ for any $(u,g)\in X\times H(f)$ with  $d((u_0,g_0),(u,g))<\delta$. Fix $([u_0],g_0)\in \tilde{X}\times H(f)$, then for any $([u],g)\in \tilde{X}\times H(f)$ with $\tilde{d}(([u_0],g_0),([u],g))<\delta$, one has $d_H(\Sigma u_0,\Sigma u)+d_Y(g_0,g)<\delta$. By the definition of Hausdorff metric on $\tilde X$, one has $\inf_{b\in S^1}d_X(u_0,\sigma_bu)+d_Y(g_0,g)<\delta$, so we can find some $a\in S^1$ such that $d((u_0,g_0),(\sigma_a u,g))<\delta$, and hence, $d(\Pi^t(u_0,g_0),\Pi^t(\sigma_a u,g))<\varepsilon$. Thus,
\begin{equation*}
\begin{split}
  \tilde {d}(\tilde{\Pi}^t([u_0],g_0),\tilde{\Pi}^t([u],g))&=\inf_{b\in S^1}d_X(\varphi(t,\cdot;u_0,g),\varphi(t,\cdot;\sigma_bu,g))+d_Y(g_0\cdot t,g\cdot t)\\
  &\leq d_X(\varphi(t,\cdot;u_0,g),\varphi(t,\cdot;\sigma_au,g))+d_Y(g_0\cdot t,g\cdot t)\\
  &=d(\Pi^t(u_0,g_0),\Pi^t(\sigma_au,g))<\varepsilon,
\end{split}
\end{equation*}
 which completes the proof of (i).

(ii). Given any two points $([u],g),([u_0],g)\in \tilde{M}$, by the minimality of $M$ there exists a sequence $t_n\rightarrow +\infty$ (as $n\rightarrow \infty$) such that
\begin{equation}\label{limit-2}
\Pi^{t_n}(u_0,g_0)\rightarrow (u,g)
\end{equation}
as $n\rightarrow \infty$.  In order to show that $\tilde{\Pi}^{t_n}([u_0],g_0)\rightarrow([u],g)$ as $n\rightarrow \infty$, it suffices to prove that
$\tilde{d}_{\tilde{X}}([\varphi(t_n,\cdot;u_0,g_0)],[u])\rightarrow 0$ as $n\rightarrow \infty$.
To this end, it follows from \eqref{limit-2} that $\varphi(t_n,\cdot;u_0,g_0)\rightarrow u$ in $X$. So, by the translation-invariance  of $G$-group action, one has $d_X(\sigma_a \varphi(t_n,\cdot;u_0,g_0),\sigma_a u)=d_X(\varphi(t_n,\cdot;u_0,g_0),u)\rightarrow 0$ as $n\rightarrow \infty$ uniformly for $a\in S^1$. Therefore, one has
\begin{equation}\label{metric-1}
  \inf_{b\in S^1}d_X(\sigma_a\varphi(t_n,\cdot;u_0,g_0),\Sigma_b u)\leq d_X(\sigma_a\varphi(t_n,\cdot;u_0,g_0),\sigma_a u)\rightarrow 0
\end{equation}
as $n\rightarrow \infty$ uniformly for $a\in S^1$. On the other hand, we know that
\begin{align}\label{metric-2}
\inf_{b\in S^1}d_X(\sigma_b \varphi(t_n,\cdot;u_0,g_0),\sigma_a u)
 \leq d_X(\sigma_a\varphi(t_n,\cdot;u_0,g_0),\sigma_a u)\rightarrow 0.
 \end{align}
 Thus, from \eqref{metric-1} and \eqref{metric-2}, Hausdorff metric $d_H(\Sigma(\varphi(t_n,\cdot;u_0,g)),\Sigma u)\rightarrow 0$ as $n\rightarrow \infty$. Recall that
\[
\tilde{d}_{\tilde{X}}([\varphi(t_n,\cdot;u_0,g_0)],[u])=d_H(\Sigma(\varphi(t_n,\cdot;u_0,g_0)),\Sigma u).\] It entails that $\tilde{d}_{\tilde{X}}([\varphi(t_n,\cdot;u_0,g_0)],[u])\rightarrow 0$ as $n\rightarrow \infty$. We have proved that $\tilde{M}$ is a minimal invariant set in $\tilde{X}\times H(f)$.
\end{proof}
Let $\tilde{p}:\tilde{X}\times H(f)\rightarrow H(f)$ be the natural projection and $M$ be a spatially inhomogeneous minimal set of \eqref{skew-product2} with $\dim V^c(\omega)=1$, or $\dim V^c(\omega)=2$ and $\dim V^u(\omega)$ being odd.
 By virtue of Corollary \ref{translate-equivalence-1}(iii), one can define an ordering on each fiber $\tilde{M}\cap \tilde{p}^{-1}(g)$, with the base point $g\in H(f)$ as follows:
$$([u],g)\leq_g ([v],g)\,\, \text{ if }\,\, m(u)\leq m(v).$$ We also write the strict relation $([u],g)<_g ([v],g)$ if $m(u)<m(v)$. Without any confusion, we hereafter will drop the subscript $``g"$.\par

\begin{lemma}\label{total-ordering1}
$``\leq"$ is a total ordering on each $\tilde{M}\cap \tilde{p}^{-1}(g)$, ($g\in H(f)$) and $\tilde{\Pi}^t$ is strictly order preserving on $\tilde{M}$ in the sense that, for any $g\in H(f)$, $([u],g)<([v],g)$ implies that $\tilde{\Pi}^t([u],g)<\tilde{\Pi}^t([v],g)$ for all $t\geq 0$.
\end{lemma}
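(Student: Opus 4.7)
\textbf{Proof plan for Lemma \ref{total-ordering1}.} The plan is to reduce both assertions to Corollary \ref{translate-equivalence-1}. Note first that the function $m(u)=\max_{x\in S^1}u(x)$ is $\sigma$-invariant, hence descends to a well-defined function on $\tilde X$; so the relation $([u],g)\le_g([v],g)\iff m(u)\le m(v)$ is unambiguously defined on each fiber $\tilde M\cap\tilde p^{-1}(g)$.

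For the total-ordering claim, I would verify the four axioms in turn. Reflexivity and transitivity are immediate from the usual order on $\mathbb{R}$, and totality holds since $m(u)$ and $m(v)$ are real numbers and therefore comparable. The only nontrivial point is antisymmetry: if $([u],g)\le([v],g)$ and $([v],g)\le([u],g)$, then $m(u)=m(v)$, and Corollary \ref{translate-equivalence-1}(iii) yields $([u],g)=([v],g)$. This is precisely the step that uses the hypothesis on $\dim V^c(\omega)$ together with the zero-number constancy established in Lemma \ref{center-constant}.

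For the strictly order-preserving claim, suppose $(u,g),(v,g)\in M$ with $([u],g)<([v],g)$, i.e.\ $m(u)<m(v)$. Since $M$ is $\Pi^t$-invariant, both $(\varphi(t,\cdot;u,g),g\cdot t)$ and $(\varphi(t,\cdot;v,g),g\cdot t)$ lie in $M\cap p^{-1}(g\cdot t)$ for every $t\ge 0$. Corollary \ref{translate-equivalence-1}(ii) then applies and gives
\[
m(\varphi(t,\cdot;u,g))<m(\varphi(t,\cdot;v,g))\quad\text{for all }t\in\mathbb{R},
\]
which is exactly the statement $\tilde\Pi^t([u],g)<\tilde\Pi^t([v],g)$ for all $t\ge 0$.

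Thus the lemma follows essentially at once once Corollary \ref{translate-equivalence-1} is in hand; there is no further obstacle. The genuine work has already been done in establishing the constancy $z(\varphi(t,\cdot;u_1,g)-\varphi(t,\cdot;\sigma_a u_2,g))=N_u$, which is what prevents two distinct classes from having the same maximum or from exchanging maxima along the flow (a crossing would force a multiple zero and hence drop the zero number, contradicting constancy).
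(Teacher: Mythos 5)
Your proof is correct and takes essentially the same approach as the paper, which simply states that the lemma is a direct consequence of Corollary \ref{translate-equivalence-1}(ii)--(iii); you have merely spelled out how (iii) gives antisymmetry (the only nontrivial axiom) and how (ii) gives strict order preservation.
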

\begin{proof}
This is a direct result of Corollary \ref{translate-equivalence-1} (ii)-(iii).
\end{proof}

Let $E\subset \tilde{X}\times H(f)$ be a compact invariant subset of $\tilde{\Pi}^t$ which admits a flow extension.
For each $g\in H(f)$, we define {\it a fiberwise strong ordering $``\ll"$ on each fiber} $E\cap\tilde{p}^{-1}(g)$ as follows: $([u_1],g)\ll ([u_2],g)$ if there exist neighborhoods $\mathcal{N}_1,\mathcal{N}_2 \subset E\cap\tilde{p}^{-1}(g)$ of $([u_1],g),([u_2],g)$, respectively, such that $([u^*_1],g)< ([u^*_2],g)$ for all $([u_i^{*}],g)\in \mathcal{N}_i\ (i=1,2).$

Moreover, for each $g\in H(f)$, we say $([u_1],g),([u_2],g)$ forms {\it a strongly order-preserving pair} if $([u_1],g),([u_2],g)$ is strongly ordered on the fiber, written $([u_1],g)\ll([u_2],g)$, and there are neighborhoods $U_i$ of $([u_i],g)$ $(i=1,2)$ in $E$ respectively, such that whenever $([u^*_1],g),([u^*_2],g)\in E\cap \tilde{p}^{-1}(g)$, with $\tilde{\Pi}^{T}([u^*_1],g)\in U_i\ (i=1,2)$ for some $T<0$, then $([u^*_1],g)\ll([u^*_2],g)$.

The following lemma is essentially from \cite{Shen1998} and will play an important role in our proof.
\begin{lemma}\label{non-ordered}
Let $E$ be a minimal set of $\tilde{\Pi}^t$ which admits a flow extension and $Y'$ be as in Lemma \ref{epimorphism-thm}. Then for any $g\in Y',\, E\cap\tilde{p}^{-1}(g)$ admits no strongly order preserving pair.
\end{lemma}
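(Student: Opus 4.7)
The plan is to argue by contradiction, combining the density of backward orbits (from minimality of $E$ together with its flow extension) with the lifting property at $g_0 \in Y'$ supplied by Lemma \ref{epimorphism-thm}. Suppose $g_0 \in Y'$ admits a strongly order-preserving pair $([u_1], g_0) \ll ([u_2], g_0)$ with associated open neighborhoods $U_1, U_2 \subset E$.

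First, by minimality of $E$ the backward orbit of $([u_2], g_0)$ is dense in $E$; I would select $t_n \to -\infty$ with $\tilde{\Pi}^{t_n}([u_2], g_0) \to ([u_1], g_0)$. In particular $g_0 \cdot t_n \to g_0$, and $\tilde{\Pi}^{t_n}([u_2], g_0) \in U_1$ for all large $n$. Because $g_0 \in Y'$, Lemma \ref{epimorphism-thm}, applied with $y = g_0$, $u_0 = [u_2]$ and the sequence $\{t_n\}$, produces $[v_n] \in \tilde{p}^{-1}(g_0) \cap E$ with $\tilde{\Pi}^{t_n}([v_n], g_0) \to ([u_2], g_0)$, hence $\tilde{\Pi}^{t_n}([v_n], g_0) \in U_2$ for all large $n$. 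The defining backward-closure property of a strongly order-preserving pair, applied at the common backward time $T = t_n$ to $([u_2], g_0)$ and $([v_n], g_0)$, then gives $([u_2], g_0) \ll ([v_n], g_0)$, and therefore $m(u_2) < m(v_n)$ by Corollary \ref{translate-equivalence-1}(iii).

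To close the argument I would iterate. A short continuity check, pulling back $U_1, U_2$ through $\tilde{\Pi}^{t_n}$, shows that $([u_2], g_0) \ll ([v_n], g_0)$ is itself a strongly order-preserving pair, so the same construction can be re-applied to produce a further fiber element $[w]$ with $m(w) > m(v_n)$, and so on. This builds a strictly $m$-increasing sequence in the compact fiber $E \cap \tilde{p}^{-1}(g_0)$, which is impossible since $m$ is continuous and hence bounded on $E$. The main obstacle I anticipate is making this iteration fully rigorous --- specifically, verifying that each newly produced pair genuinely inherits the backward-closure condition, so that the procedure does not stall at a subsequential limit. This is precisely where the use of $g_0 \in Y'$ is essential: the lifting property is what allows two distinct fiber elements to be simultaneously backward-approximated through the fixed neighborhoods $U_1, U_2$ at a common time, which is exactly the structural ingredient demanded by the definition of a strongly order-preserving pair.
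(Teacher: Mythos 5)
The first half of your argument is sound and is essentially the mechanism behind Shen--Yi's Theorem~II.3.1, to which the paper's proof simply defers: choosing $t_n\to-\infty$ with $\tilde{\Pi}^{t_n}([u_2],g_0)\to([u_1],g_0)\in U_1$, invoking $g_0\in Y'$ to lift $([u_2],g_0)$ to points $([v_n],g_0)\in E\cap\tilde{p}^{-1}(g_0)$ with $\tilde{\Pi}^{t_n}([v_n],g_0)\to([u_2],g_0)\in U_2$, and applying the SOP definition at the common negative time $T=t_n$ to get $([u_2],g_0)\ll([v_n],g_0)$, hence $m(u_2)<m(v_n)$. The closing step, however, has a genuine gap: a strictly $m$-increasing sequence in a compact fiber is \emph{not} a contradiction --- a bounded strictly increasing real sequence is perfectly possible, so ``$m$ is continuous and hence bounded'' rules out nothing. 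This is precisely the stalling-at-a-limit issue you flagged as the anticipated obstacle, and the iteration as written does not resolve it: nothing in the construction bounds the increments $m(v^{(k+1)})-m(v^{(k)})$ away from zero.

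The repair is to run your first (correct) step once, not iteratively, but starting from the top of the fiber. Since $E\cap\tilde{p}^{-1}(g_0)$ is compact, $m$ is continuous, and the fiber is totally ordered by $m$ (Lemma~\ref{total-ordering1}, via Corollary~\ref{translate-equivalence-1}(iii)), a maximal element $([w_{\max}],g_0)$ exists. Nothing in the definition of a strongly order-preserving pair forces the first slot to be $[u_2]$ itself: any $([w],g_0)$ in the fiber can be pushed backward into $U_1$ along some $t_n\to-\infty$ (minimality plus the flow extension), while the $Y'$-lift of $([u_2],g_0)$ lands in $U_2$ at the same times. Taking $[w]=[w_{\max}]$ gives $([w_{\max}],g_0)\ll([v_n],g_0)$, i.e.\ $m(w_{\max})<m(v_n)$, contradicting maximality in a single step and eliminating the need for (and the danger of) the iteration.
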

\begin{proof}
One can repeat the arguments in \cite[Theorem II.3.1]{Shen1998} to obtain this lemma.
\end{proof}
Now we are ready to prove Theorem \ref{sysm-embed}.\par

\begin{proof}[Proof of Theorem \ref{sysm-embed}]
(1)
Let $Y_0=Y'$, where $Y'$ is defined as in Lemma \ref{epimorphism-thm}. By virtue of Lemma \ref{induced-skewproduct}, we consider the induced invariant minimal set $\tilde{M}$ for the skew-product semiflow $\tilde{\Pi}^t$ on $\tilde{X}\times H(f)$.

 In order to prove the statement of Theorem \ref{sysm-embed}(1), we first show that for any $g\in Y_0$, there exists $u_g\in X$ such that $M\cap p^{-1}(g)\subset (\Sigma u_g,g)$. To this end, it suffices to prove that $\tilde{M}\cap \tilde{p}^{-1}(g)$ is a singleton. Suppose that there are two distinct points $([u_1],g),([u_2],g)$ on $\tilde{M}\cap \tilde{p}^{-1}(g)$ for some $g\in Y_0$. It then follows from Lemma \ref{total-ordering1} that $([u_1],g)$ and $([u_2],g)$ is strictly order related, say $([u_1],g)<([u_2],g)$. Then, by the order defined before Lemma \ref{total-ordering1}, we have $m(u_1)<m(u_2)$.

We  prove that $([u_1],g)$ and $([u_2],g)$ are strongly ordered.
In fact, let $\varepsilon_0=m(u_2)-m(u_1)$. Because $m(u)$ is continuous with respect to $u \in X$, there is a $\delta >0$ such that
\begin{equation}\label{neighborhood}
|m(\tilde{u})-m(u_1)|<\frac{\varepsilon_0}{4}\,\, (\text{resp.}\, |m(\tilde{u})-m(u_2)|<\frac{\varepsilon_0}{4})
\end{equation}
for any $(\tilde{u},g)\in X\times H(f)$ with $d_X(\tilde{u},u_1)<\delta$ (resp. $d_X(\tilde{u},u_2)<\delta$). For such $\delta>0$ and $i=1,2$, we define the neighborhoods $\mathcal{N}_i=\{([u^*_i],g)\in \tilde{M}\,|\,\tilde{d}_{\tilde{X}}([u^*_i],[u_i])<\delta\}$ of $([u_i],g)$. Then if $([u^*_i],g)\in \mathcal{N}_i$, one can find some $a_i\in S^1$ such that $d_X(\sigma_{a_i}u^*_i,u_i)<\delta$. As a consequence, by \eqref{neighborhood}, we have $|m(\sigma_{a_i}u^*_i)-m(u_i)|<\frac{\varepsilon_0}{4}$, which implies that
\begin{equation}\label{diffe-ep}
m(u^*_1)=m(\sigma_{a_1}u^*_1)<m(\sigma_{a_2}u^*_2)-\frac{\varepsilon_0}{2}=m(u^*_2)-\frac{\varepsilon_0}{2}.
\end{equation}
By the definition of order defined in $\tilde{M}\cap \tilde{p}^{-1}(g)$, $g\in H(f)$ it entails that $([u^*_1],g)< ([u^*_2],g)$. Thus we have shown that $([u_1],g)\ll([u_2],g)$.

We now claim that, $([u_1],g)$ and $([u_2],g)$ forms a strongly order preserving pair. Indeed, for each $i=1,2$, we define a neighborhood $U_i=\{([v_i],h)|\tilde{d}(([u_i],g),([v_i],h))<\delta\}$ of $([u_i],g)$ in $\tilde{M}$. Given any $([u^*_i],g)\in \tilde{M}\cap\tilde{p}^{-1}(g)$ with $\tilde{\Pi}^{T}([u^*_i],g)\in U_i$ for some $T<0$, we write $\tilde{\Pi}^{T}([u^*_i],g)=(\tilde{\varphi}(T,[u^*_i],g),g\cdot T)\in U_i$. So $\tilde{d}_{\tilde{X}}(\tilde{\varphi}(T,[u^*_i],g),[u_i])<\delta$ for $i=1,2$. Recall that $m(u_1)<m(u_2)$, one can repeat the arguments for proving \eqref{diffe-ep} in the previous paragraph to obtain that $m(\varphi(T,u^*_1,g))<m(\varphi(T,u^*_2,g))$. It then follows from the definition of the order defined on the fiber $\tilde{M}\cap \tilde{p}^{-1}(g\cdot T)$ that $\tilde{\varphi}(T,[u^*_1],g)<\tilde{\varphi}(T,[u^*_2],g)$. Combining with the order-preserving property of $\tilde{\Pi}^t(t\geq 0)$ in Lemma \ref{total-ordering1}, we obtain that
\[
([u^*_1],g)=\tilde{\Pi}^{-T}(\tilde{\varphi}(T,[u^*_1],g),g\cdot T)<\tilde{\Pi}^{-T}(\tilde{\varphi}(T,[u^*_2],g),g\cdot T) =([u^*_2],g).
\]
In fact, by repeating the arguments in previous paragraph, we can get further $([u^*_1],g)\ll ([u^*_2],g)$. Thus, we have proved the claim.
  On the other hand, Lemma \ref{non-ordered} implies that there exists no such strongly order preserving pair on $\tilde{M}\cap \tilde{p}^{-1}(g)$, a contradiction.  Thus, we have proved that for any $g\in Y_0$, $\tilde{M}\cap \tilde{p}^{-1}(g)$ is a singleton (In other words, $\tilde{M}$ is an almost $1$-cover of $H(f)$), which also implies that $M\cap p^{-1}(g)\subset (\Sigma u_g,g)$ for some $u_g\in X$.

 Next, we will show that $M$ can be residually embedded into an almost automorphically  forced flow on $S^1$. Based on the above discussion, we have known that $\tilde{M}$ is an almost $1$-cover of $H(f)$ (with the residual subset $Y_0\subset H(f)$). Now,  define the mapping
\begin{equation}\label{E:natu-proj-h}
h:\tilde{M}\to \mathbb{R}\times H(f); ([u],g)\mapsto (m(u),g).
\end{equation}
Let $\hat{M}=h(\tilde{M})$. Clearly, $h$ is well-defined and continuous onto $\hat{M}$. Moreover, $h$ is injective due to Corollary \ref{translate-equivalence-1}(iii). Recall that $\tilde{M}$ and $\hat{M}$ are both compact, $h$ is also a closed mapping. Hence, $h$ is a homeomorphism from $\tilde{M}$ onto $\hat{M}$. On such $\hat{M}\subset \mathbb{R}\times H(f)$, one can naturally define the skew-product flow
\begin{equation}\label{E:induced-flow-hat-M}
\hat{\Pi}^t:\hat{M}\to \hat{M};(m(u),g)\mapsto (m(\varphi(t,\cdot,u,g)),g\cdot t),
\end{equation}
which is induced by $\Pi^t$ restricted to $M$. So, a straightforward check yields that
$$h\circ\tilde{\Pi}^t([u],g)=\hat{\Pi}^t\circ h([u],g)\,\,\,
\text{ for any }([u],g)\in \tilde{M}.$$ This entails that $h$ is a topologically-conjugate homeomorphism between $\tilde{M}\to \hat{M}\subset \mathbb{R}\times H(f)$. As a consequence, $\hat{M}$ is also an almost $1$-cover of $H(f)$ (with the residual subset $Y_0\subset H(f)$).

For each $g\in Y_0$, we choose some element, still denoted by $u_g(\cdot)$, from the $S^1$-group orbit $\Sigma u_g$ such that
\begin{equation}\label{E:u-g--base-trn}
\quad u_g(0)=m(u_g), \quad\hat M\cap p^{-1}(g)=(m(u_g),g)\,\text{ and }\,\tilde{M}\cap \tilde{p}^{-1}(g)=([u_g],g).
\end{equation}
Together with  \eqref{E:induced-flow-hat-M}, \eqref{E:u-g--base-trn} implies that $$u_{g\cdot t}(0)=m(u_{g\cdot t})=m(\varphi(t,\cdot,u_g,g)),\,\,\,\text{ for any }g\in H_0(f)\,\,\text{ and }t\in \mathbb{R}.$$
As a consequence, given any $g\in Y_0$, the function
$t\mapsto u_{g\cdot t}(0)$ is clearly continuous and is almost automorphic in $t$ (due to the fact that
$\hat{M}$ is an almost $1$-cover, see Remark \ref{a-p-to-minial}); and moreover, $u^{ g}(t,x):=u_{g\cdot t}(x)$ is almost automorphic in $t$ uniformly in $x$.

Now, for any $g\in Y_0$, we may define a non-negative function $t\mapsto c^g(t)\ge 0$ such that
\begin{equation}\label{E:rotation-spiral}
 \varphi(t,x;u_g,g)= u_{g\cdot t}(x+ c^g(t)),\quad \text{or equivalently,}\quad \varphi(t,x-c^g(t);u_g,g)=u_{g\cdot t}(x).
\end{equation}

In the following, we will show that $c^g(t)$ is indeed satisfied for \eqref{E:transla-cg-func} in the statement of Theorem \ref{sysm-embed}.
Before proceeding to this, we first {\it clarify two facts} due to the minimality and spatial-inhomogeneity of $M$:

(a) All the elements in $M$ share the common smallest positive spatial-period $L\in (0,2\pi]$;

(b) For the $u_g(\cdot)$ defined in \eqref{E:u-g--base-trn}, one has \begin{equation}\label{E:u'-u''-nontrivial-1}
u^{'}_g(0)=0\,\text{ and }u^{''}_g(0)\not =0\quad \text{ for any }g\in Y_0.
\end{equation}

To prove (a), we fix some $(u,g)\in M$ and let $L\in (0,2\pi]$ be the smallest positive spatial-period of $u(\cdot)$. Then, the minimality of $M$ implies that $L$ is a positive spatial-period of any element in $M$. We now assert that $L$ must be the smallest spatial-period of any other element of $M$. For otherwise, there exists some $(u_1,g_1)\in M$ and a positive number $L'<L$ such that $L'$ is the smallest spatial-period of $u_1(\cdot)$. So, the minimality of $M$ again implies that there is a sequence $t_n\to\infty$ such that $\Pi^{t_n}(u_1,g_1)\to (u,g)$; and hence, $\Pi^{t_n}(\sigma_{L'}u_1,g_1)\to (\sigma_{L'}u,g)$. Note that $\sigma_{L'}u_1=u_1$, we have $\sigma_{L'}u=u$, which contradicts the smallness of spatial-period $L$ for $u(\cdot)$.

 As for (b), since $M$ is spatially-inhomogeneous,  it follows that $\varphi_x(t,\cdot;u_g,g)\in V^c(\Pi^t(u_g,g))$ for each $t$. Recall that $M$ satisfies
  either $\dim V^c(M)=1$ with $\dim V^u(M)>0$,  or $\dim V^c(M)=2$ with $\dim V^u(M)$ being odd.
  Then, by virtue of Lemma \ref{zero-inva}, it follows that $\varphi_x(t,\cdot,u_g,g)$ only has simple zeros for any $t\in \mathbb{R}$. In particular, by letting $t=0$, one has $u_g^{'}(\cdot)$ only has simple zeros. Together with $u_g^\prime(0)=0$ (because $u_g(0)=m(u_g)$) and Corollary \ref{translate-equivalence-1}(i), one then obtain that
$u^{''}_g(0)\not =0$ for all $g\in Y_0$.

Now, we are ready to prove that $c^g(t)$ is satisfied for \eqref{E:transla-cg-func} in Theorem \ref{sysm-embed}.
By virtue of Fact (a), we let $\mathcal{S}^1:=\mathbb{R}/L\mathbb{Z}$. Then for each $t\in \mathbb{R}$, one can further choose $c^g(t)\in \mathcal{S}^1$ in \eqref{E:rotation-spiral} so that $c^g(t)$ is continuous in $t$. Indeed, suppose that there is a sequence $t_n\to t_0$ such that $\abs{c^g(t_0)-c^g(t_n)}\ge \epsilon_0>0$ in $\mathcal{S}^1$. For the sake of simplicity, we assume $c^g(t_n)\to c^*$ with $c^*\in \mathcal{S}^1$. So, $c^g(t_0)\ne c^*$ in $\mathcal{S}^1$. On the other hand, by \eqref{E:rotation-spiral}, one has
$u_{g\cdot t_0}(x+ c^g(t_0))=\varphi(t_0,x;u_g,g)=\lim_{n\to\infty}\varphi(t_n,x;u_g,g)=\lim_{n\to\infty}u_{g\cdot t_n}(x+ c^g(t_n))=u_{g\cdot t_0}(x+ c^*)$. This contradicts $c^g(t_0)\ne c^*$ with $c^g(t_0),c^*\in \mathcal{S}^1$, because $L$ is the smallest spatial-period.
So, the function $t\mapsto c^g(t)\in \mathcal{S}^1$ is continuous.

By \eqref{E:rotation-spiral} and the property of $u_{g\cdot t}(x)$ in \eqref{E:u'-u''-nontrivial-1} from Fact (b), we observe that
$$
\varphi_x(t,-c^g(t);u_g,g)=u^{'}_{g\cdot t}(0)=0\,\,
\text{ and }\,\,\varphi_{xx}(t,-c^g(t);u_g,g)= u^{''}_{g\cdot t}(0)\not =0.
$$
Then by the continuity of $c^g(t)$ in $t$ and Implicit Function Theorem, we have $c^g(t)$ is differentiable in $t$; and moreover, we have
\begin{eqnarray}\label{circle-flow-eq41}
\dot {c}^g(t)=G(t,c^g(t)),
 \end{eqnarray}
 where
\begin{equation*}
\begin{split}
G(t,z)=&g_p(t,\varphi(t,-z;u_g,g),\varphi_x(t,-z;u_g,g))\\
&+\dfrac{\varphi_{xxx}(t,-z;u_g,g)+g_u(t,\varphi(t,-z;u_g,g),\varphi_x(t,-z;u_g,g))\varphi_x(t,-z;u_g,g)}{\varphi_{xx}(t,-z;u_g,g)},\,\,\,\text{ for }z\in \mathcal{S}^1.
\end{split}
\end{equation*}
It is easy to see that $G(t,z+L)=G(t,z)$ and the function $G(t,c^g(t))=g_p(t,u_{g\cdot t}(0),0)+\frac{u^{'''}_{g\cdot t}(0)}{u^{''}_{g\cdot t}(0))}$, and hence $\dot{c}^g(t)$, is time almost-automorphic in $t$. Thus, we have obtained \eqref{circle-flow-eq41}, which naturally induces
an almost-automorphically forced skew-product flow on $\mathcal{S}^1\times H(f)$.

\vskip 2mm

  (2)  Note that Corollary \ref{translate-equivalence-1} also holds for
the case ${\rm dim}V^c(\omega)= 1$.
Then one can repeat the same argument in (1) to obtain a residual invariant subset $Y_0\subset H(f)$ such that $\tilde{M}\cap \tilde{p}^{-1}(g)$ is a singleton for any $g\in Y_0$.
In order to show $Y_0=H(f)$, we first note that, for any $(u_0,g)\in \Sigma M$, if $a\in S^1$ is close to $e$, then
$$\norm{\varphi(t,\cdot;\sigma_{a} u_0,g)-\varphi(t,\cdot;u_0,g)}\text{ is sufficiently small, for all }\,t\in \R.$$ It then follows that
that $\sigma_{a} u_0\in M^{c}((u_0,g),\delta^*)$ whenever $a\in S^1$ is close to $e$. Together with ${\rm dim}V^c(\omega)= 1$, one has
\begin{equation}\label{cmfd-contain-trans}
M^c((u_0,g),\delta^*)\subset \Sigma u_0,\,\, \text{ for any }(u_0,g)\in  \Sigma M.
\end{equation}
Moreover, due to Corollary \ref{zero-center}(3), one of the following two cases must occur:

(A): $z(u(\cdot)-u_0(\cdot))>N_u$ ($\dim V^u(\omega)$ is even), for $u\in M^{s}((u_0,g),\delta^*)\setminus\{u_0\}$; or otherwise

(B): $z(u(\cdot)-u_0(\cdot))<N_u$ ($\dim V^u(\omega)$ is odd), for $u\in M^{u}((u_0,g),\delta^*)\setminus\{u_0\}$.
\vskip 2mm

\noindent In the following, we will prove $Y_0=H(f)$ for case (A), and the proof for case (B) is analogous.

Suppose that there exist $g\in H(f)\setminus Y_0$ and $(u_1,g),(u_2,g)\in M$ such that $[u_1]\neq [u_2]$.  Then it follows from Corollary \ref{translate-equivalence-1}(i) that $z(\varphi(t,\cdot;u_1,g)-\varphi(t,\cdot;\sigma_a u_2,g))=N_u$ for all $t\in \mathbb{R}$ and $a\in S^1$.  Moreover, by the compactness of $S^1$, there exists $\delta>0$ (independent of $a\in S^1$) such that \begin{equation}\label{case-A-const-1}
z(u_1-\sigma_a u_2+v)=N_u\,\, \text{ for any }a\in S^1 \text{ and }\|v\|<\delta.
\end{equation}
Fix some $(u^+,g^+)\in M$ with $g^+\in Y_0$, then there exist a sequence $t_n\to\infty$ and some $b\in S^1$ such that $g\cdot t_n\to g^+$,
$$
\varphi(t_n,\cdot;u_1,g)\to u^+,\,\,\, \varphi(t_n,\cdot;u_2,g)\to \sigma_{b}u^+.
$$ Let $u_3=\sigma_{-b}u_2\in  \Sigma M$. Then $\varphi(t_n,\cdot;u_1,g)\to u^+,\,\,\, \varphi(t_n,\cdot;u_3,g)\to u^+.$
By virtue of Lemma \ref{cap-nonempty2}, one can find some $v_n^*\in M^u(\varphi(t_n,\cdot;u_1,g),\delta^*)\cap M^{cs}(\varphi(t_n,\cdot;u_3,g),\delta^*)$ for $t_n$ sufficiently large.

We now claim that $v_n^*\notin M^c(\varphi(t_n,\cdot;u_3,g),\delta^*)$. For otherwise, \eqref{cmfd-contain-trans} implies that $v_n^*=\sigma_{a^*} \varphi(t_n,\cdot;u_3,g)$ for some $a^*\in S^1$. Note also that $v_n^*\in M^u(\varphi(t_n,\cdot;u_1,g),\delta^*)$, then
\begin{equation}\label{backward-contracting-1}
\|\sigma_{a^*}u_3-u_1\|=\norm{\varphi(-t_n,\cdot;v_n^*,g\cdot t_n)-u_1}\leq Ce^{-\frac{\alpha}{2} t_n}\|v_n^*-\varphi(t_n,\cdot;u_1,g)\|.
\end{equation}
Let $\epsilon_0=|m(u_1)-m(u_2)|>0$ with $m(u_i)=\max_{x\in S^1} u_i(x)$ for $i=1,2$ (note that $\epsilon_0>0$ is due to $u_2\notin\Sigma u_1$ and Corollary \ref{translate-equivalence-1}(iii)). Since $X\hookrightarrow C^1(S^1)$, it is not difficult to see that $\|u_1-\sigma_{a}u_2\|\geq C_0|m(u_1)-m(\sigma_au_2)|=C_0|m(u_1)-m(u_2)|=C_0\epsilon_0$ for some constant $C_0>0$ and all $a\in S^1$. On the other hand, by letting $t_n$ large enough in \eqref{backward-contracting-1}, one has $\|\sigma_{a^*}u_3-u_1\|<\mathrm{min}\{\delta^*,C_0\epsilon_0\}$.
This contradicts $ \|\sigma_{a^*}u_3-u_1\|\geq C_0\epsilon_0$. Thus, we have proved the claim.

Recall that $v_n^*\in M^{cs}(\varphi(t_n,\cdot;u_3,g),\delta^*)$. Then it follows from Remark \ref{stable-leaf} and \eqref{cmfd-contain-trans} that there is some $a_0\in S^1$ such that $v_n^*\in M^{s}(\sigma_{a_0}\varphi(t_n,\cdot;u_3,g),\delta^*)$ with $\sigma_{a_0}\varphi(t_n,\cdot;u_3,g)\in M^c(\varphi(t_n,\cdot;u_3,g),\delta^*)$. Since case (A) holds, we obtain $z(v_n^*-\sigma_{a_0}\varphi(t_n,\cdot;u_3,g))>N_u$, and hence,
\begin{equation}\label{u-3-Nu-large}
z(\varphi(-t_n,\cdot;v_n^*,g\cdot t_n)-\sigma_{a_0}u_3)>N_u.
\end{equation}
On the other hand, one can deduce from \eqref{backward-contracting-1} that $\|\varphi(-t_n,\cdot;v_n^*,g\cdot t_n)-u_1\|<\delta$, for $t_n$ sufficiently large. Here $\delta>0$ is in \eqref{case-A-const-1}. Then \eqref{case-A-const-1} implies that $z(\varphi(-t_n,\cdot;v_n^*,g\cdot t_n)-\sigma_{a_0}u_3)=z(\varphi(-t_n,\cdot;v_n^*,g\cdot t_n)-u_1+u_1-\sigma_{a_0}u_3)=z(u_1-\sigma_{a_0}u_3)=z(u_1-\sigma_{a_0-b}u_2)=N_u$, contradicting \eqref{u-3-Nu-large}. Consequently, we have proved $Y_0=H(f)$.

 Therefore, one has $M\cap p^{-1}(g)=(\Sigma u_g,g)$ for any $g\in H(f)$, where $u_g$ is defined in \eqref{E:u-g--base-trn}. So, $u_{g\cdot t}(x)$ is almost periodic in $t$ uniformly in $x\in \mathcal{S}^1$; and moreover, $\dot{c}^g(t)$ in \eqref{E:transla-cg-func} is almost-periodic in $t$. Thus, we have completed the proof of Theorem \ref{sysm-embed}.
\end{proof}

\vskip 2mm

\begin{proof}[Proof of Theorem \ref{ergodic-thm}]
(1)  Suppose that ${\rm dim}V^c(\omega)=n_c>2$. Then there exists an integer $k\in \mathbb{N}$ such that the center space $V^c(\omega)$ admits an invariant splitting as $V^c(\omega)=\hat{G}(\omega)\bigoplus W_k(\omega)\bigoplus \hat{F}(\omega)$ (with  $\hat G(\omega)\subset \bigoplus_{i=0}^{k-1}W_i(\omega)$, $\hat F(\omega)\subset \bigoplus_{i=k+1}^{k+n_c}W_i(\omega)$; and at least one of $\hat G(\omega)$ and $\hat F(\omega)$ is nontrivial), which are exponentially separated in the sense of Lemma \ref{floquet-bundle}(v). Moreover, noticing that $V^c(\omega)$ is finite-dimensional, it then follows from the similar statement and argument in \cite[Corollary 4.7]{Chow1995} that such invariant splitting is also exponentially separated in $V^c(\omega)$ with respect to the $X$-norm $\norm{\cdot}$.

Since $M$ is uniquely ergodic, $M$ possesses a pure point Sacker-Sell spectrum. So, for any $v\in V^c(\omega)\setminus\{0\}$, one has
$$\lim_{t\to \infty}\dfrac{\ln \norm{\Psi(t,\omega)v}}{t}= 0.$$ In particular,
$\lim_{t\to \infty}\frac{\ln \norm{\Psi(t,\omega)v_1}}{t}= 0=\lim_{t\to \infty}\frac{\ln \norm{\Psi(t,\omega)v_2}}{t},$ for any $v_1\in W_k(\omega)\setminus\{0\}$, $v_2\in \hat{F}(\omega)\setminus\{0\}$ (or $v_2\in \hat{G}(\omega)\setminus\{0\}$). This contradicts the exponential separated property between $W_k(\omega)$ and $\hat{F}(\omega)$ (or $\hat{G}(\omega)$). Therefore, we have obtained ${\rm dim}V^c(\omega)\le 2.$

We then have either (i) ${\rm dim} V^c(\omega)=0$ or ${\rm dim} V^c(\omega)=1$ or (ii)
${\rm dim} V^c(\omega)=2$.  Moreover, by Lemma \ref{floquet-bundle} and similar arguments as in the previous paragraph, we must have ${\rm dim}V^u(\omega)$ is odd provided that ${\rm dim} V^c(\omega)=2$.

(2) 
 Since $M$ is spatially homogeneous, the variational equation associated with any solution in $M$  turns out to be $v_t=v_{xx}+a(t)v_x+b(t)v$ with periodic boundary condition. Using the transform $w=v(t,x+c(t))$ (with $\dot{c}(t)=-a(t)$),  we get $w_t=w_{xx}+b(t)w$; and moreover, by using another transformation $\hat{w}=we^{-\int_0^tb(s)ds}$, one obtains that $\hat{w}_t=\hat{w}_{xx}.$ So, it follows that the transform $\hat{w}=v(t,x+c(t))e^{-\int_0^tb(s)ds}$ (and hence, $v(t,x)=e^{\int_0^tb(s)ds}\hat{w}(t,x-c(t))$) results in a  simple equation $\hat{w}_t=\hat{w}_{xx}.$ Note that $\hat{w}_t=\hat{w}_{xx}$ possesses the simplest ``sin-cos"-mode eigenfunctions as $w_k(t,x)=e^{-k^2t}\sin kx,e^{-k^2t}\cos kx$ associated with the same eigenvalue $\lambda_k=-k^2$, $k=0,1,\cdots.$ Then it yields that $v_k(t,x)=e^{-k^2t+\int_0^tb(s)ds}\sin k(x-c(t)),e^{-k^2t+\int_0^tb(s)ds}\cos k(x-c(t))$. This immediately implies that, if  $V^u(\omega)\ne \{0\}$, then ${\rm dim}V^u(\omega)$ must be odd.
\end{proof}

\section{Almost Automorphic and Almost Periodic Minimal Flows}

In this section, we investigate the conditions under which a minimal set of \eqref{equation-lim1}  is almost automorphic or almost periodic in the case that $f=f(t,u,u_x)$ (see Theorem \ref{almost-automorphic-thm}). Moreover, we will also investigate the case that $f(t,u,p)=f(t,u,-p)$ (see Theorem  \ref{almost-periodic-thm}).

\begin{theorem}
\label{almost-automorphic-thm}
Let $M$ be a minimal set of \eqref{equation-lim1}.
\begin{itemize}

\item[{\rm (1)}] If $M$ is spatially homogeneous, then $M$ is topologically conjugate to a minimal flow in $\mathbb{R}\times H(f)$. Moreover, it is an almost $1$-cover of $H(f)$.

\item[{\rm (2)}] If $M$ is linearly stable, then $M$ is spatially homogeneous; and hence, is an almost $1$-cover of $H(f)$.

\item[{\rm (3)}] If $M$ is uniformly stable, then $M$ is  spatially homogeneous and is a $1$-cover of $H(f)$.

\item[{\rm (4)}]   If $M$ is hyperbolic (i.e., ${\rm dim}V^c(\omega)=0$), then $M$ is a $1$-cover of $H(f)$.
\end{itemize}
\end{theorem}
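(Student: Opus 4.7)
The plan is to prove the four parts in cascade, with a common backbone: if $M$ is spatially homogeneous, the skew-product semiflow reduces to the scalar almost periodic ODE $\dot u=g(t,u,0)$ on $\mathbb{R}\times H(f)$, where the classical Shen--Yi theory combined with the monotonicity of scalar ODEs immediately yields the desired $1$-cover or almost $1$-cover properties. The strategy is therefore to show, under each of the hypotheses of (2), (3) and (4), that $M$ must be spatially homogeneous.

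For (1), spatial homogeneity means every $u$ with $(u,g)\in M$ is constant in $x$, and $M$ becomes a minimal set of $\dot u=g(t,u,0)$; two distinct solutions of a scalar ODE stay strictly monotonically ordered, so the flow on $M$ is distal and Lemma~\ref{epimorphism-thm} applied to this totally-ordered distal extension yields the almost $1$-cover property.

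For (2), I argue by contradiction: suppose $M$ is spatially inhomogeneous, fix $(u_0,g)\in M$ with $u_{0,x}\not\equiv 0$, and set $v(t,\cdot):=\varphi_x(t,\cdot;u_0,g)$. Differentiating~\eqref{equation-lim1} in $x$ shows that $v$ is a nontrivial solution of the variational equation along the orbit through $(u_0,g)$; compactness of $M$ and minimality force $\|v(t,\cdot)\|$ bounded above and bounded away from $0$, so its Lyapunov exponent vanishes. Linear stability gives $V^u(\omega)=\{0\}$, hence $N_u=0$. If $M$ is hyperbolic the triviality of $V^c$ forces $u_{0,x}\equiv 0$, a contradiction. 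Otherwise $u_{0,x}\in V^c(\omega)$ and the translates $\sigma_a u_0$ lie in the local center manifold $M^c((u_0,g),\delta^*)$ for $|a|$ small (since $\partial_a|_{a=0}\sigma_a u_0=u_{0,x}\in V^c$); Corollary~\ref{zero-center}(3) then yields $z(\sigma_a u_0-u_0)=N_u=0$, contradicting $z(\sigma_a u_0-u_0)\ge z(u_{0,x})\ge 2$ coming from the $2\pi$-periodicity and non-constancy of $u_0$. The edge case ${\rm dim}V^c=2$ is ruled out by Theorem~\ref{ergodic-thm}, which forces ${\rm dim}V^u$ to be odd in that regime, incompatible with ${\rm dim}V^u=0$. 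Hence $M$ is spatially homogeneous and (1) gives the almost $1$-cover property. Part (3) is then a short consequence: uniform stability implies linear stability, so (2) gives spatial homogeneity, and on the reduced scalar almost periodic ODE a uniformly stable minimal set is a $1$-cover by classical Shen--Yi arguments.

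For (4), hyperbolicity gives $V^c(\omega)=\{0\}$, so any bounded two-sided solution of the variational equation must vanish. Applying this to the same candidate $v(t,\cdot)=\varphi_x(t,\cdot;u_0,g)$ used in (2) forces $u_{0,x}\equiv 0$ for every $(u_0,g)\in M$, so $M$ is spatially homogeneous and the flow on $M$ reduces to a hyperbolic minimal set of $\dot u=g(t,u,0)$; a hyperbolic minimal set of a scalar almost periodic ODE is automatically a $1$-cover of $H(f)$ by classical theory (the uniqueness of the stable-manifold foliation together with the monotonicity of scalar ODEs). The main obstacle in the overall proof is the linearly-stable sub-argument of (2), where one must leverage the tangency $u_{0,x}\in V^c$ and the tight zero-number equality $z\equiv N_u$ on $M^c$ (Corollary~\ref{zero-center}(3)) to exclude a nontrivial center; in the hyperbolic setting of (4) this obstacle evaporates because $V^c$ is already trivial, which makes (4) the cleanest of the four statements.
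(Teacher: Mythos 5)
For parts (1), (3) and (4) your reasoning is essentially the paper's: (1) reduces $M$ to a minimal set of the scalar ODE $\dot u=g(t,u,0)$ and cites the classical Shen--Yi theory (though note that distality of a minimal set of a scalar almost periodic ODE is \emph{not} automatic --- two distinct ordered solutions may still have $\liminf_t|u_1(t)-u_2(t)|=0$; the actual mechanism in \cite{Shen1998} is the no--strongly-order-preserving-pair argument of Lemma \ref{non-ordered}, not distality); (3) invokes Lemma \ref{stable-imply-linear-stable} and then (2); and (4) runs exactly the paper's argument: $u_{0,x}$ is a nontrivial bounded two-sided solution of the linearization, forcing $\dim V^c\ge 1$ if $M$ were inhomogeneous.

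For part (2), however, you take a genuinely different route from the paper and it has a gap. You place $u_{0,x}$ in $V^c(\omega)$ (correct: the Lyapunov exponent argument yields $u_{0,x}\in V^{cs}\cap V^{cu}=V^c$), embed $\sigma_a u_0$ into $M^c((u_0,g),\delta^*)$ for small $a$ (also correct), and then try to get $z(\sigma_a u_0-u_0)=N_u=0$ from Corollary \ref{zero-center}(3), contradicting $z(\sigma_a u_0-u_0)\ge 2$. The problem is that (i) Corollary \ref{zero-center} carries the standing hypothesis $\dim V^u(\omega)\ge 1$, which fails here since linear stability gives $\dim V^u=0$; (ii) even if one goes back to Lemma \ref{L:lap-number-control-on-mfds}(iii), the tight equality $z\equiv N_u$ on $M^c\setminus\{u_0\}$ requires $\dim V^c=1$, or $\dim V^c=2$ with $\dim V^u$ \emph{odd} --- which is incompatible with $\dim V^u=0$; and (iii) your appeal to Theorem \ref{ergodic-thm} to exclude $\dim V^c\ge 2$ is not available, since Theorem \ref{ergodic-thm}(1) presupposes that $M$ is uniquely ergodic, an assumption absent from Theorem \ref{almost-automorphic-thm}(2). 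Thus the case $\dim V^c\ge 2$ is left open in your argument. The paper's proof of (2) avoids this difficulty entirely: it uses the classical exponentially separated splitting $X=X_1(u,g)\oplus X_2(u,g)$ of the strongly monotone linearized semiflow, shows $u_{0,x}$ has a nonzero component in the principal direction $\phi(u_0,g)\in\operatorname{Int}X^+$, and concludes $\varphi_x(t,\cdot)\in\operatorname{Int}X^+\cup(-\operatorname{Int}X^+)$ for large $t$, which is impossible on $S^1$ since $\int_{S^1}\varphi_x\,dx=0$. That argument is indifferent to the dimension of $V^c$, which is precisely why it closes the case your argument leaves open.
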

\begin{proof}
(1) Suppose that $M$ is a spatially homogeneous minimal set of \eqref{equation-lim1}. Since $f=f(t,u,u_x)$, $M$ is also a minimal set of
\begin{equation}
\label{scalar-ode}
\dot u=\tilde g(t,u),
\end{equation}
where $\tilde g(t,u)=g(t,u,0)$ and $g\in H(f)$. It then follows from \cite{Shen1998} that $M$ is an almost $1$-cover of $H(f)$.

(2)  Suppose that
$M$ is spatially inhomogeneous. Then for any $(u,g)\in M$, $u$  is spatially inhomogeneous.

Let $\Psi(t,v,(u,g))=(\Phi(t,u,g)v,\Pi^t(u,g))$
be the linearized skew-product semiflow of $M$. By the exponentially separated property of the strongly monotone
skew-product semiflows (see e.g. \cite[p.38]{Shen1998}), there is a continuous invariant splitting $X=X_1(u,g)\oplus X_2(u,g)$ with $X_1(u,g)={\rm span}\{\phi(u,g)\}$, $\phi(u,g)\in {\rm Int}X^{+}$ and $X_2(u,g)\cap X^+=\{0\}$ such that
$$
\Phi(t,u,g)X_1(u,g)=X_1(\Pi^t(u,g)),\quad \Phi(t,u,g)X_2(u,g)\subset X_2(\Pi^t(u,g)).
$$ Moreover, there are $K,\gamma>0$ such that
\begin{equation}\label{exponential-separation}
\|\Phi(t,u,g)|_{X_2(u,g)}\|\le Ke^{-\gamma t}\|\Phi(t,u,v)|_{X_1(u,g)}\|
\end{equation} for any $t\ge 0$ and $(u,g)\in M$. Since $M$ is linearly stable, one has
$$\limsup_{t\to\infty}\frac{\ln\|\Phi(t,u,g)\phi(u,g)\|}{t}\le 0.$$

Given any $(u_0,g_0)\in M$, let
$v(t,x)=\varphi_x(t,x;u_0,g_0)$. Then $\norm{v(t,\cdot)}$ is bounded. Moreover, one can find a $\delta_0>0$ such that $\|v(t,\cdot)\|\ge \delta_0$ for all $t$.
(Otherwise,
there is a sequence $t_n\to\infty$ such that $\|v(t_n,\cdot)\|\to 0$ as $n\to\infty$, which entails that $\varphi_x(t_n,x;u,g)\to 0$ as $n\to\infty$ uniformly in $x\in S^1$. Without loss of generality, we assume that $(\varphi(t_n,\cdot;u,g),g\cdot{t_n})\to (u^*,g^*)\in M$
as $n\to\infty$. Thus, $u^*$ must be spatially homogeneous, a contradiction.)  Therefore, we have
$\lim_{t\to\infty}\frac{\ln\|v(t,\cdot)\|}{t}=0.$
Note also that
$v(t,\cdot)=\Phi(t,u_0,g_0)v(0,\cdot)$. Then
\begin{equation}\label{E:center-dire-inho-1}
\lim_{t\to\infty}\frac{\ln\|\Phi(t,u_0,g_0)v(0,\cdot)\|}{t}=0.
\end{equation}
As a consequence, we have $v(0,\cdot)=\alpha\phi(u_0,g_0)+ \psi(u_0,g_0)$ for some $\alpha\not =0$  and
$\psi(u_0,g_0)\in X_2(u_0,g_0)$.
It follows from \eqref{exponential-separation} that $v(t,\cdot)\in {\rm Int}X^{+}\cup (-{\rm Int}X^{+})$ for $t$ sufficiently large, a contradiction to the periodic boundary condition.
Therefore, $M$ is spatially homogeneous; and hence, $M$ is an almost $1$-$1$ cover of $H(f)$.

(3) Because $M$ is uniformly stable, it then follows from Lemma \ref{stable-imply-linear-stable} (whose proof will be postponed in Section 5 as it is independent of the materials in Section 3) that $M$ is linearly stable. By (1), $M$ is almost $1$-$1$ cover of $H(f)$. Note that the uniform stability of $M$ also implies that it is distal (see \cite[Theorem II 2.8]{Shen1998}).
Therefore, $M$ is $1$-cover of $H(f)$.

(4) Since $\dim V^c(\omega)=0$, $M$ must be spatially homogeneous. Otherwise, by \eqref{E:center-dire-inho-1}, it is easy to see that ${\rm dim}V^c(\omega)\ge 1$. As a consequence, $M$ is a hyperbolic minimal set
of \eqref{scalar-ode}. By  \cite{ShenYi-2}, $M$ is a $1$-$1$ cover of $H(f)$.
\end{proof}

\vskip 2mm
In the following, we will consider the structure of the minimal set $M$ for the case  $f(t,u,p)=f(t,u,-p)$.
\begin{theorem}
\label{almost-periodic-thm}
 Assume that $f(t,u,p)=f(t,u,-p)$. Then any minimal set $M$ of \eqref{equation-lim2} is an almost $1$-cover of $H(f)$.

 Moreover,
$M$ is a $1$-cover of $H(f)$, if one of the following alternatives holds:

\indent $\quad$ {\rm (i)} $M$ is hyperbolic;

\indent $\quad$ {\rm (ii)} ${\rm dim}V^c(\omega)=1$ and $M$ is spatially inhomogeneous.
\end{theorem}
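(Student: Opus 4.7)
The hypothesis $f(t,u,p)=f(t,u,-p)$ gives the equation a reflection symmetry: for each $a\in S^1$ the map $R_a:u(\cdot)\mapsto u(2a-\cdot)$ sends solutions to solutions, i.e.\ $R_a\varphi(t,\cdot;u,g)=\varphi(t,\cdot;R_au,g)$, so $\tau M:=R_0M$ is another minimal set whenever $M$ is. The plan is to extract, from this extra symmetry and the zero-number machinery of Lemma \ref{sigma-function}, the rigidity needed to upgrade the partial results of Sections~3 and~4.

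For the almost-$1$-cover claim, the spatially homogeneous case is handled by Theorem \ref{almost-automorphic-thm}(1), so we assume $M$ spatially inhomogeneous. The key reduction is: every $(u,g)\in M$ is spatially symmetric about each critical point of $u$. Given a critical point $a$ of $u$, set
\[
v(t,x)=\varphi(t,x;u,g)-\varphi(t,2a-x;u,g).
\]
Then $v$ is antisymmetric about $a$ (so $v(t,a)=v(t,a+\pi)=0$ for all $t$), satisfies a linear parabolic equation of the type in Lemma \ref{sigma-function}, and at $t=0$ has a multiple zero at $a$ since $v_x(0,a)=2u_x(a)=0$. If $v\not\equiv 0$, Lemma \ref{sigma-function} forces $z(v(t,\cdot))$ to drop at $t=0$ and stabilize at some $N<z(v(0,\cdot))=:M_0$ with only simple zeros for $t$ large. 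Minimality furnishes $t_n\to\infty$ with $\Pi^{t_n}(u,g)\to (u,g)$, so $v(t_n,\cdot)\to v(0,\cdot)$ in $C^1$. In the generic situation where $a$ is the only multiple zero of $v(0,\cdot)$, the implicit function theorem places one simple zero of $v(t_n,\cdot)$ near each of the $M_0-1$ simple zeros of $v(0,\cdot)$, while the identity $v(t_n,a)=0$ contributes one more at $a$, yielding $N\ge M_0$---the desired contradiction. Hence $v\equiv 0$, i.e.\ $u=R_au$. A residual-set argument based on Lemma \ref{epimorphism-thm} then extracts some $g_0\in H(f)$ at which $M\cap p^{-1}(g_0)$ is a singleton.

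Case (i) is immediate from Theorem \ref{almost-automorphic-thm}(4). For case (ii), Theorem \ref{sysm-embed}(2) supplies, for every $g\in H(f)$, an element $u_g\in X$ of minimal period $L$ normalized so $u_g(0)=\max u_g$, and a continuous phase $c^g(t)\in\mathcal{S}^1=\mathbb{R}/L\mathbb{Z}$ with $c^g(0)=0$ satisfying $\varphi(t,x;u_g,g)=u_{g\cdot t}(x+c^g(t))$. Since $0$ is a critical point of $u_g$, the reduction above makes $u_g$ even. Writing $\tau\varphi(t,x;u_g,g)=u_{g\cdot t}(-x+c^g(t))$ on one hand and $\tau\varphi(t,\cdot;u_g,g)=\varphi(t,\cdot;\tau u_g,g)=\varphi(t,\cdot;u_g,g)$ on the other, one obtains that $u_{g\cdot t}$ is symmetric about both $0$ and $c^g(t)$; two distinct axes of symmetry force $2c^g(t)\in L\mathbb{Z}$, so $c^g(t)\in\{0,L/2\}$, whence continuity and $c^g(0)=0$ yield $c^g\equiv 0$. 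Thus $\varphi(t,\cdot;u_g,g)=u_{g\cdot t}$ and $M$ is a $1$-cover of $H(f)$. The principal obstacle is the zero-counting step of the symmetry reduction in the degenerate case in which $v(0,\cdot)$ has multiple zeros other than at $a$---for instance when $a+\pi$ is also critical (which happens already for Morse functions with antipodal extrema), or when $u$ has nontrivial critical pairs $\{x_0,2a-x_0\}$. In such cases a sharper count, or a minimality argument locating a $(u',g')\in M$ whose critical points are in non-antipodal generic position, is required to close the reduction.
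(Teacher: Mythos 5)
Your approach diverges substantially from the paper's, and it contains two genuine gaps, only one of which you acknowledge.

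The paper's argument for the almost $1$-cover is built on the Chen--Matano structure: because $f(t,u,p)=f(t,u,-p)$, \cite[Theorem~B]{Chen1989160} gives a distinguished point $x_0\in S^1$ (a common minimum of all elements of $M$, about which everything is symmetric) so that comparison of values at $x_0$ defines a total order on each fiber $M\cap p^{-1}(g)$ (Lemma \ref{total-ordering}). One then takes the fiber pair $(\hat u^1,g_0),(\hat u^2,g_0)$ realizing the minimal zero number $Z(g_0)$, shows via Lemma \ref{sigma-function}(b) that it is a strongly order-preserving pair, and invokes Lemma \ref{non-ordered} to get a contradiction when $g_0\in Y'$. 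You instead try to re-derive a reflection-symmetry property of elements of $M$ from scratch, using the zero-number drop for $v(t,x)=\varphi(t,x;u,g)-\varphi(t,2a-x;u,g)$ and the return of the orbit to $(u,g)$. This is a legitimate starting idea (and is morally what underlies Chen--Matano), but your execution has a real hole that you flag yourself: the zero count only closes when $a$ and $a+\pi$ are the only multiple zeros of $v(0,\cdot)$, and the symmetric pairing of multiple zeros $\{x_0,2a-x_0\}$ that you note makes a ``sharper count'' nontrivial, not automatic. The second, unacknowledged gap is worse: even granting the symmetry ``$u=R_au$ at every critical point $a$'', you simply assert that ``a residual-set argument based on Lemma \ref{epimorphism-thm} extracts some $g_0$ at which $M\cap p^{-1}(g_0)$ is a singleton.'' Nothing in Lemma \ref{epimorphism-thm} turns a spatial symmetry of the $u$-components into the claim that some fiber is a singleton; the paper gets this only by building the fiberwise total order and then using Lemma \ref{non-ordered}. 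As written this step is missing, not merely sketched.

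For case (i), citing Theorem \ref{almost-automorphic-thm}(4) is fine and matches the paper (which has a harmless misprint, writing (3) for (4)). For case (ii), however, the paper's route is different and more robust: it combines the (already proven) almost $1$-cover with $M\cap p^{-1}(g)\subset(\Sigma u_g,g)$ from Theorem \ref{sysm-embed}(2) and invokes Lemma \ref{1-cover-trans-lm}, which is a short pure-dynamics argument needing no further symmetry input. Your two-axes-of-symmetry argument for $c^g\equiv 0$ is clever, but it rests on $u_g$ being even, i.e.\ on the very symmetry claim that you have not established in general. So case (ii) inherits the same gap. If you want to retain your reflection-symmetry framework, you would need to close the degenerate zero-count and, more importantly, supply the missing link from spatial symmetry to almost $1$-cover; otherwise the paper's order-theoretic route via \cite[Theorem~B]{Chen1989160}, Lemma \ref{total-ordering}, Lemma \ref{non-ordered}, and Lemma \ref{1-cover-trans-lm} is the cleaner path.
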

\vskip 2mm

Before proving Theorems \ref{almost-periodic-thm}, we present some lemmas.
For convenience, we also write in the following the solution $\varphi(t,\cdot;u,g)$ as $\varphi(t;u_1,g)(\cdot)$ in the context without any confusion.

\begin{lemma}\label{order-minimal}
  Assume that $f(t,u,p)=f(t,u,-p)$ and $M$ be a minimal invariant set. Then, there is a point $x_0\in S^1$ such that for any $(u,g)\in M$, one has $u_x(x_0)=0$.
\end{lemma}
\begin{proof}
  It can be proved by using similar deductions for Theorem B in \cite{Chen1989160}. See also \cite[Proposition 3.1]{SWZ2} for the detailed proof.
\end{proof}

Let $M$ be a minimal invariant set and assume $f(t,u,p)=f(t,u,-p)$.
 For each $g\in H(f)$ and any $(u,g),(v,g)\in M\cap p^{-1}(g)$, we define within this section the relation between $(u,g),(v,g)$ as
\begin{equation*}
 (u,g)\le_g(v,g) \Longleftrightarrow \text{ there is a }T>0 \text{ such that }
(\varphi(t;u,g)-\varphi(t;v,g))(x_0)\le 0 \text{ for all }t\ge T,
\end{equation*}
where $x_0\in S^1$ is as defined in Lemma \ref{order-minimal}. t is not difficult to see that the relation ``$\le_g$" is a partial order on the fibre $M\cap p^{-1}(g)$, which means that, if
$(u,g)\le_g(v,g)$ and $(v,g)\le_g(u,g)$, then $(u,g)=(v,g)$.

As usual, we say
$$(u,g)<_g(v,g) \Longleftrightarrow (u,g)\le_g(v,g) \textnormal{ and }
u\ne v.$$
We also call $(u,g)\ll_g(v,g)$ if there exist neighborhoods $\mathcal{N}_1$, $\mathcal{N}_2$ of $(u,g)$ and $(v,g)$, respectively, such that $(u_1,g)<_g(v_1,g)$ for any $(u_i,g)\in \mathcal{N}_1,i=1,2.$

\begin{lemma}\label{total-ordering}
Assume that $f(t,u,p)=f(t,u,-p)$.
Then for any $g\in H(f)$ and $(u,g),(v,g)\in M\cap p^{-1}(g)$, one has

{\rm (i)} $(u,g)<_g(v,g)$ if and only if there exists some $T>0$ such
that $$(\varphi(t;u,g)-\varphi(t;v,g))(x_0)<0\,\text{ for all }t\ge T.$$

{\rm (ii)} ``$\le_g$" is a total ordering on $M\cap p^{-1}(g).$

\end{lemma}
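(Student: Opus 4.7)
The plan is to combine the partial order guaranteed by Chen--Matano's Theorem B with the zero-number control from Lemma \ref{sigma-function}. Set $w(t,x) := \varphi(t;u,g)(x) - \varphi(t;v,g)(x)$. Since $u\neq v$, $w$ is a nontrivial solution of a linear parabolic equation of the form \eqref{linear-equation} with the coefficients written out in the proof of Lemma \ref{sigma-function}. The one substantive step of the proof, from which both (i) and (ii) fall out painlessly, is the following dichotomy:

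\medskip
\noindent\emph{Key claim.} If $u\neq v$, then there exist $T>0$ and $\epsilon\in\{-1,+1\}$ such that $\epsilon\cdot w(t,x_0)>0$ for all $t\ge T$.
\medskip

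Granting the claim, (i) is immediate. The forward direction: $(u,g)<_g(v,g)$ means $u\neq v$ together with $w(t,x_0)\le 0$ for $t\ge T_0$, which combined with the claim forces $\epsilon=-1$, hence $w(t,x_0)<0$ for $t\ge\max(T,T_0)$. The reverse direction is just the definition of $\le_g$ together with $u\neq v$. For (ii), given two distinct fiber elements $(u,g),(v,g)\in M\cap p^{-1}(g)$, the claim provides a definite sign of $w(t,x_0)$ for large $t$, which places them in a comparable position under $\le_g$; together with the Chen--Matano partial order property already built into $\le_g$, this upgrades the partial order to a total order.

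To prove the claim I would argue by contradiction. Apply Lemma \ref{sigma-function}(c) to obtain $T^*>0$ and an integer $N\ge 0$ such that for all $t\ge T^*$ the function $w(t,\cdot)$ has only simple zeros and $z(w(t,\cdot))=N$. By the implicit function theorem (simple zeros being nondegenerate), the zero set of $w(t,\cdot)$ consists of $N$ pairwise disjoint continuous curves $x_1(t),\ldots,x_N(t)$ for $t\ge T^*$. Suppose the claim fails; then $w(t,x_0)$ neither stays strictly positive nor strictly negative for all large $t$, so either it vanishes identically for all large $t$, or it changes sign infinitely often. In the first case, $w(t,x_0)\equiv 0$ on $[T^{**},\infty)$ gives both $(u,g)\le_g(v,g)$ and $(v,g)\le_g(u,g)$; by the partial-order property supplied by Chen--Matano this forces $u=v$, contradicting $u\neq v$. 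In the second case, one of the simple-zero curves $x_i(t)$ must cross $x_0$ transversally at infinitely many times; but at each such crossing the zero $x_i(t)$ either merges with another curve $x_j(t)$ (forbidden by the constancy of $N$ and simple-zero property) or would induce a drop of the zero number if a non-simple zero appeared, again contradicting $z(w(t,\cdot))\equiv N$ on $[T^*,\infty)$.

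The main obstacle is the oscillatory case: cleanly ruling out the possibility that a simple-zero curve $x_i(t)$ passes through $x_0$ infinitely often without violating the constancy of the zero number. My plan is to handle this either by a direct analysis of the motion of simple zeros (transversal crossings of $x_0$ leave $N$ unchanged but change the sign of $w(t,x_0)$, and between consecutive crossings there must be an intermediate time when the sign is wrong, so repeated application would eventually create a multiple zero and drop $N$), or, more robustly, by appealing to the asymmetry between the two inequalities $(u,g)\le_g (v,g)$ and $(v,g)\le_g (u,g)$ forced by the partial-order structure: persistent sign-oscillation of $w(t,x_0)$ would mean that for arbitrarily large times both inequalities are witnessed on long subintervals, which is incompatible with the Chen--Matano partial order because it would allow one to pass to a limit point in $M\cap p^{-1}(g^*)$ where $u=v$ fails.
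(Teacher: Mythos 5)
Your key claim is indeed the heart of the matter, and the strategy of reducing it to ``eventually $w(t,\cdot)$ has only simple zeros, hence $w(t,x_0)$ has a definite sign'' is exactly right. But the step you identify as the main obstacle --- ruling out the case that a simple-zero curve $x_i(t)$ crosses $x_0$ infinitely often --- is a genuine gap, and neither of your two fallback sketches closes it. A simple-zero curve can cross the fixed point $x_0$ transversally as many times as it likes without ever producing a multiple zero or changing the total zero number $N$, so the zero-number machinery alone cannot forbid persistent sign oscillation of $w(t,x_0)$. The appeal to ``asymmetry of the two inequalities'' is also not a proof: $\le_g$ is defined via an eventual one-sided inequality, and oscillation simply means neither inequality holds, which by itself causes no contradiction with antisymmetry.

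What you are missing is the content of Chen--Matano's Theorem B that the paper relies on: when $f(t,u,p)=f(t,u,-p)$, the point $x_0$ is the common \emph{minimum point} (in particular a common critical point) of every element of $M$, i.e. $\varphi_x(t;u,g)(x_0)=\varphi_x(t;v,g)(x_0)=0$ for all $t$ and all $(u,g),(v,g)\in M$. Consequently, whenever $w(t_0,x_0)=0$ one automatically has $w_x(t_0,x_0)=0$ as well, so $x_0$ is a \emph{multiple} zero of $w(t_0,\cdot)$. Once you pick $T^*$ so that $w(t,\cdot)$ has only simple zeros for $t\ge T^*$, the relation $w(t,x_0)=0$ becomes impossible for $t\ge T^*$, and continuity forces a single sign. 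This is exactly the paper's argument, and it collapses your ``main obstacle'' in one line. Without this observation your dichotomy claim is unproved, so you should cite the common-critical-point property of $x_0$ explicitly and use it in place of the zero-curve-crossing heuristic.
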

\begin{proof}
(i) The sufficiency is obvious. If $(u,g)<_g(v,g)$, then $u\ne v$ and there exists a $T>0$ such
that $(\varphi(t;u,g)-\varphi(t;v,g))(x_0)\le 0$ ($x_0$ is as defined in Lemma \ref{order-minimal}), for all $t\ge T_0.$ Choose a larger $T>0$, if necessary, one may also assume that $\varphi(t,\cdot;u,g)-\varphi(t,\cdot;v,g)$ only possesses simple zeros for each $t\ge T$.  Then Lemma \ref{order-minimal} immediately implies that $(\varphi(t;u,g)-\varphi(t;v,g))(x_0)<0\,\text{ for all }t\ge T_0.$

(ii) Given any two distinct $(u,g),(v,g)\in p^{-1}(g)\cap M$, one may find a $T>0$ such that $\varphi(t,\cdot;u,g)-\varphi(t,\cdot;v,g)$ only possesses simple zeros for each $t\ge T$. Without loss of generality, we assume that $(\varphi(T;u,g)-\varphi(T;v,g))(x_0)<0$. Then we have $(\varphi(t;u,g)-\varphi(t;v,g))(x_0)<0$ for all $t>T$. Otherwise, one can find a $t_0>T$ such that
$(\varphi(t_0;u,g)-\varphi(t_0;v,g))(x_0)=0$. Together with Lemma \ref{order-minimal}, we get a contradiction.
\end{proof}

\begin{lemma}\label{1-cover-trans-lm}
 If $M$ is almost $1$-$1$ cover of $H(f)$, then $M$ is $1$-$1$ if and only if for any $g\in H(f)$, there exists $u\in X$ such that $M\cap p^{-1}(g)\subset (\Sigma u,g)$.
\end{lemma}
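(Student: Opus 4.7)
The forward direction is immediate: if $M$ is a $1$-cover, then each $M\cap p^{-1}(g)$ is a singleton $\{(u_g,g)\}$, which is trivially contained in $(\Sigma u_g,g)$.

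For the converse, the plan is to argue by contradiction, combining three ingredients: the almost $1$-cover assumption, the $S^1$-equivariance of the skew-product semiflow (valid since $f$ does not depend on $x$), and the minimality of $M$. Suppose $M$ is not a $1$-cover, so there exist $(u_1,g),(u_2,g)\in M$ with $u_1\neq u_2$. The fibre hypothesis gives $u_2=\sigma_b u_1$ for some $b\in S^1$ with $\sigma_b u_1\neq u_1$. Pick $g_0\in H(f)$ with $M\cap p^{-1}(g_0)=\{(u_{g_0},g_0)\}$ (such a $g_0$ exists by the almost $1$-cover assumption), and use minimality of $(H(f),\sigma)$ to choose $t_n\to\infty$ with $g\cdot t_n\to g_0$. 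Any subsequential limit of $\Pi^{t_n}(u_1,g)$ lies in $M\cap p^{-1}(g_0)$ by closedness of $M$, hence equals $(u_{g_0},g_0)$; so the whole sequence satisfies $\varphi(t_n,\cdot;u_1,g)\to u_{g_0}$. The same reasoning applied to $(u_2,g)$ yields $\varphi(t_n,\cdot;u_2,g)\to u_{g_0}$. But the $S^1$-equivariance $\varphi(t,\cdot;\sigma_b u,g)=\sigma_b\varphi(t,\cdot;u,g)$, which holds because $f$ is independent of $x$, forces $\varphi(t_n,\cdot;u_2,g)=\sigma_b\varphi(t_n,\cdot;u_1,g)\to\sigma_b u_{g_0}$. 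Comparing the two limits gives $\sigma_b u_{g_0}=u_{g_0}$.

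The remaining step is to propagate this $\sigma_b$-invariance from the single point $(u_{g_0},g_0)$ to every point of $M$. For this I would introduce $N:=\{(u,h)\in M:\sigma_b u=u\}$, which is closed (by continuity of $\sigma_b$), invariant under $\Pi^t$ (because $\sigma_b$ commutes with the semiflow by equivariance), and nonempty (it contains $(u_{g_0},g_0)$). Minimality of $M$ then forces $N=M$. In particular $(u_1,g)\in N$, i.e., $\sigma_b u_1=u_1$, contradicting $u_1\neq u_2=\sigma_b u_1$.

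I do not anticipate a substantial obstacle; the main point to verify carefully is that the sequence $\Pi^{t_n}(u_i,g)$ converges rather than merely sub-converges, which follows from the fact that every subsequential limit must lie in the singleton fibre $M\cap p^{-1}(g_0)$. It is worth noting that this lemma does not actually require the symmetry hypothesis $f(t,u,p)=f(t,u,-p)$ nor the Chen--Matano ordering of Lemma \ref{total-ordering}; only spatial-translation equivariance (i.e., independence of $f$ from $x$) together with the minimality of $M$ and of $H(f)$ are used.
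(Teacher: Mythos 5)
Your proof is correct, and the core of it coincides with the paper's: argue by contradiction, use $S^1$-equivariance (which the paper invokes implicitly when it writes $\Pi^{t_n}(\sigma_{\tilde a}v_1,g)\to(\sigma_{\tilde a}w^*,g^*)$), push forward along a sequence landing over a singleton fibre to conclude that $\sigma_b$ fixes $u_{g_0}$. Where you diverge is the propagation step. The paper chooses a second sequence $s_n\to\infty$ with $\Pi^{s_n}(w^*,g^*)\to(v_1,g)$ (density of the orbit of $(w^*,g^*)$ in the minimal set $M$) and applies $S^1$-equivariance once more to transport the invariance back to $v_1$. You instead observe that $N=\{(u,h)\in M:\sigma_b u=u\}$ is a nonempty, closed, positively invariant subset of $M$, so minimality forces $N=M$. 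Both endgames rest on minimality of $M$, but your fixed-point-set argument is somewhat cleaner structurally: it replaces a second ad hoc choice of sequence with a one-line invocation of the definition of minimality, and it makes explicit that what is really being established is that $\sigma_b$ acts trivially on all of $M$. Your closing remark is also accurate: the lemma uses only $x$-independence of $f$ (for $S^1$-equivariance) and minimality of $M$ and $H(f)$, not the symmetry $f(t,u,p)=f(t,u,-p)$, which is consistent with the lemma being stated without that hypothesis even though it appears inside the $f(t,u,-p)=f(t,u,p)$ discussion.
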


\begin{proof}
We only need to prove the sufficiency part.
Suppose that $M$ is not a $1$-$1$ cover.
Then there exists some $g\in H(f)$ with two distinct points $(v_1,g),(v_2,g)\in M \cap p^{-1}(g)$.
By our assumption, we have $\sigma_{\tilde{a}}v_1=v_2$ for some $\tilde{a} \in S^1$ .
Choose some other $g^*\in H(f)$ with $M \cap p^{-1}(g^*)\triangleq\{(w^*,g^*)\}$.
The minimality of $M$ implies that one can find a sequence $t_n\rightarrow \infty$ such that $\Pi^{t_n}(v_1,g)\rightarrow (w^*,g^*)$, and hence, $\Pi^{t_n}(v_2,g)=\Pi^{t_n}(\sigma_{\tilde{a}}v_1,g)\rightarrow (\sigma_{\tilde{a}} w^*,g^*)$. This entails that $\sigma_{\tilde a} w^*=w^*$ (because $M \cap p^{-1}(g^*)=\{(w^*,g^*)\}$).
On the other hand, one can also choose another sequence $s_n\to \infty$ such that $\Pi^{s_n}(w^*,g^*)\rightarrow (v_1,g)$. So $\Pi^{s_n}(\sigma_{\tilde{a}}w^*,g^*)\rightarrow (\sigma_{\tilde{a}}v_1,g)$. Recall that $\sigma_{\tilde a} w^*=w^*$ , one has $\sigma_{\tilde{a}} v_1=v_1$. Thus we obtain that $v_2=v_1$, a contradiction.
\end{proof}

\begin{proof}[Proof of Theorem \ref{almost-periodic-thm}]
Let $Y_0$ be the invariant residual set in Lemma \ref{non-ordered}. Suppose that there exists some $g_0\in
Y_0$ such that $\textnormal{card}(M\cap p^{-1}(g_0))>1$. Then
we define
\begin{equation}\label{def-sigma-g}
 Z(g_0):=\min_{ {\small \begin{array}{c}
(u^1,g_0),(u^2,g_0)\in M\cap p^{-1}(g_0)\\ (u^1,g_0)\ne (u^2,g_0)
\end{array}}
} \left\{\inf_{t>0}z(\varphi(t,\cdot;u^1,g_0)-\varphi(t,\cdot;u^2,g_0))\right\}.
\end{equation}
By virtue of the definition $z$ and Lemma
\ref{sigma-function}(c), there exist
$(\hat{u}^1,g_0),(\hat{u}^2,g_0)\in M\cap p^{-1}(g_0)$ and some
$T>0$ such that
\begin{equation}\label{expression-sigma-g}
 Z(g_0)=z(\varphi(t,\cdot;\hat{u}^1,g_0)-\varphi(t,\cdot;\hat{u}^2,g_0))\qquad \textnormal{ for all } t\ge T.
\end{equation}
By Lemma \ref{total-ordering}(ii), we can assume without loss of generality that
$(\hat{u}^1,g_0)<_{g_0}(\hat{u}^2,g_0)$, and hence,
$$(\varphi(t;\hat{u}^1,g_0)-\varphi(t;\hat{u}^2,g_0))(x_0)<0 \qquad \textnormal{ for all } t\ge T,$$
where $x_0$ is as defined in Lemma \ref{order-minimal}.

Firstly, we will show that $(\hat{u}^1,g_0)\ll_{g_0}(\hat{u}^2,g_0)$. In fact, fix the $T>0$ in
(\ref{expression-sigma-g}), it then follows from the continuity of $z$ that there are neighborhoods
$\tilde{\mathcal{N}}_1,\tilde{\mathcal{N}}_2$ of $(\hat{u}^1,g_0),(\hat{u}^2,g_0)$ in $M$ respectively such that
\begin{equation}\label{time-T-g}
\begin{array}{l}
{\rm (i).} \qquad z(\varphi(T,\cdot;u^1,g)-\varphi(T,\cdot;u^2,g))\equiv Z(g_0);
\\
{\rm (ii).}\qquad (\varphi(T;u^1,g)-\varphi(T;u^2,g))(x_0)<0
\end{array}
\end{equation}
for all $(u^i,g)\in \tilde{\mathcal{N}}_i (i=1,2)$.
Let
$\mathcal{N}_i=\tilde{\mathcal{N}}_i\cap p^{-1}(g_0)$, $(i=1,2)$. Then one has
\begin{equation}\label{time-T-g0}
\begin{array}{l}
{\rm (i).} \qquad z(\varphi(T,\cdot;u^1,g_0)-\varphi(T,\cdot;u^2,g_0))\equiv Z(g_0);
\\
{\rm (ii).}\qquad (\varphi(T;u^1,g_0)-\varphi(T;u^2,g_0))(x_0)<0
\end{array}
\end{equation}
for all $(u^i,g_0)\in \mathcal{N}_i(i=1,2)$. We now {\it claim} that $(\varphi(t;u^1,g_0)-\varphi(t;u^2,g_0))(x_0)<0$, for all $t\ge T$ and
$(u^i,g_0)\in \mathcal{N}_i(i=1,2)$. Suppose on the contrary that
there are some $(u^i,g_0)\in \mathcal{N}_i(i=1,2)$ such that
$(\varphi(T_1;u^1,g_0)-\varphi(T_1;u^2,g_0))(x_0)\ge 0$ for some $T_1>T$. Then, by (\ref{time-T-g0})(ii),
 one can find some $T_2\in (T,T_1]$ such that
$(\varphi(T_2;u^1,g_0)-\varphi(T_2;u^2,g_0))(x_0)= 0$. Recall that $\varphi_x(T_2;u^1,g_0)(x_0)=0=\varphi_x(T_2;u^2,g_0)(x_0)$ in Lemma \ref{order-minimal}. Then
$x_0$ is a multiple zero of the function $\varphi(T_2,\cdot;u^1,g_0)-\varphi(T_2,\cdot;u^2,g_0)$.
So, by virtue of (\ref{time-T-g0})(i) and Lemma \ref{sigma-function}(b),
$$Z(g_0)=z(\varphi(T,\cdot;u^1,g_0)-\varphi(T,\cdot;u^2,g_0))
>z(\varphi(T_1,\cdot;u^1,g_0)-\varphi(T_1,\cdot;u^2,g_0)),$$
which contradicts the minimum definition of $Z(g_0)$ in
(\ref{def-sigma-g}). Thus, we have proved the claim. Together with Lemma
\ref{total-ordering}(1), the claim entails that $(u^1,g_0)<_{g_0}(u^2,g_0)$ for all
$(u^i,g_0)\in \mathcal{N}_i(i=1,2)$. In other words, we have obtained that
$(\hat{u}^1,g_0)\ll_{g_0}(\hat{u}^2,g_0)$.

Next, we will show that $(\hat{u}^1,g_0),(\hat{u}^2,g_0)$ forms
a {\it strongly order-preserving pair}, that is, one can find neighborhoods $U_i$ of $(\hat u_i,g_0)$ $(i=1,2)$ in $M$ respectively, such that whenever $(u_1,g_0),(u_2,g_0)\in M\cap p^{-1}(g_0)$, with $\Pi^{t_0}(u_i,g_0)\in U_i\ (i=1,2)$ for some $t_0<0$, then $(u_1,g_0)\ll_{g_0}(u_2,g_0)$.
 To this end, let us recall the neighborhoods
$\tilde{\mathcal{N}}_i$ of $(\hat{u}^i,g_0)$ $(i=1,2)$ obtained before (\ref{time-T-g}). So, for any $(u^i,g_0)\in M\cap
p^{-1}(g_0)$ with $\Pi^{t_0}(u^i,g_0)\in \tilde{\mathcal{N}}_i(i=1,2)$ for
some $t_0<0$,  it follows from (\ref{time-T-g}) and the
cocycle property of $\varphi$ that
\begin{equation}\label{E:time-T+t0-g0}
\begin{array}{l}
{\rm (i).} \qquad z(\varphi(T+t_0,\cdot;u^1,g_0)-\varphi(T+t_0,\cdot;u^2,g_0))\equiv Z(g_0);
\\
{\rm (ii).}\qquad (\varphi(T+t_0;u^1,g_0)-\varphi(T+t_0;u^2,g_0))(x_0)<0.
\end{array}
\end{equation}
So, similarly as in the claim after (\ref{time-T-g}), we can utilize \eqref{E:time-T+t0-g0} and Lemma \ref{order-minimal} to obtain that
\begin{equation}\label{E:keep-negative-T+t0}
(\varphi(t;u^1,g_0)-\varphi(t;u^2,g_0))(x_0)<0,\,\,\,\text{ for any } t\ge\max\{T+t_0,0\}.
 \end{equation}
 In fact, suppose not, one can repeat the exact same argument in that claim
 to find some $T_2>T+t_0$ such that $x_0$ is a multiple zero of the function $\varphi(T_2,\cdot;u^1,g_0)-\varphi(T_2,\cdot;u^2,g_0)$. As a consequence, Lemma \ref{sigma-function}(b) implies that
 $$z(\varphi(T+t_0,\cdot;u^1,g_0)-\varphi(T+t_0,\cdot;u^2,g_0))
>z(\varphi(t,\cdot;u^1,g_0)-\varphi(t,\cdot;u^2,g_0)),$$ for any $t> \max\{T_2,0\}$. Hence, together with
\eqref{E:time-T+t0-g0}(i), one has $Z(g_0)>z(\varphi(t,\cdot;u^1,g_0)-\varphi(t,\cdot;u^2,g_0))$ for any $t> \max\{T_2,0\}$, which again contradicts the definition of $Z(g_0)$ in (\ref{def-sigma-g}). Thus, \eqref{E:keep-negative-T+t0} has been proved; and moreover, one can also have
\begin{equation}\label{E:keep-negative-T+t0-zg}
z(\varphi(t,\cdot;u^1,g_0)-\varphi(t,\cdot;u^2,g_0))\equiv Z(g_0),\,\,\,\text{ for any } t\ge\max\{T+t_0,0\}.
 \end{equation}
By virtue of \eqref{E:keep-negative-T+t0} and Lemma \ref{total-ordering}(i), we have proved $(u^1,g_0)<_{g_0}(u^2,g_0)$. Furthermore, by utilizing \eqref{E:keep-negative-T+t0}-\eqref{E:keep-negative-T+t0-zg} and  repeating the same argument (for proving $(\hat{u}^1,g_0)\ll_{g_0}(\hat{u}^2,g_0)$ there) in the previous paragraph, we obtain that $(u^1,g_0)\ll_{g_0}(u^2,g_0)$. Thus, we have proved $(\hat{u}^1,g_0),(\hat{u}^2,g_0)$ forms a strongly order-preserving pair.

Therefore, Lemma \ref{non-ordered} directly implies that $\textnormal{card}(M\cap p^{-1}(g_0))=1$, which entails that $M$ is an almost $1$-$1$ cover of $H(f)$.

Finally, we will show that $M$ is a $1$-$1$ cover of $H(f)$ under the assumption (i) or (ii).

(i) Assume that $M$ is hyperbolic. Then this is a special case of Theorem \ref{almost-automorphic-thm}(3).

(ii) Assume that $M$ is ${\rm dim}V^c(\omega)=1$ and spatially inhomogeneous. Then it follows from Theorem \ref{sysm-embed}(2)  that, for any $g\in H(f)$, there is a $u\in X$, such that $M\cap p^{-1}(g)\subset \{(\Sigma u,g)\}$. Recall now that $M$ is almost $1$-$1$ cover of $H(f)$. Then Lemma \ref{1-cover-trans-lm} immediately implies $M$ is $1$-cover of $H(f)$. We have completed the proof.
\end{proof}

\section{Embedding Property of Minimal Sets in the General Case}\label{main result}
  In this section, we focus on the embedding property of minimal set of general spatially-dependent almost-periodic system \eqref{equation-1} with the smooth (say $C^{3}$) nonlinearity $f=f(t,x,u,u_x)$. The following two Theorems are our main results in this section.

\begin{theorem}\label{main-result-thm}
Let $M\subset X\times H(f)$ be a minimal invariant set of $\Pi^t$. Assume that $M$ is stable or linearly stable. Then there
is an invariant and residual set $Y_{0}\subset H(f)$ such that the flow $(M\cap p^{-1}(Y_0),\Pi^t)$ is topologically conjugate to a skew-product flow on some $\hat{M}\subset \mathbb{R}^2\times Y_0$.
\end{theorem}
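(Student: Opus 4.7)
The strategy is to reduce to the linearly stable case, establish the dimension bound $\dim V^c(\omega)\le 2$ on every central subspace over $M$, realize $M$ locally inside the corresponding $\le 2$-dimensional center manifold $M^c(\omega,\delta^*)$, and finally globalize this identification along a residual set supplied by Lemma \ref{epimorphism-thm}.

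First, I would reduce the stable case to the linearly stable case. Compactness of $M$ together with stability of every forward orbit in $M$ gives uniform boundedness of the derivative cocycle $\Phi(t,u,g)$ on $M$ for $t\ge 0$, hence $\lambda_M\le 0$, i.e.\ $M$ is linearly stable (the analogue of the step used in the proof of Theorem \ref{almost-automorphic-thm}(3)). So we may assume $M$ is linearly stable. The Sacker--Sell spectrum over $M$ then lies in $(-\infty,0]$, and consequently $V^u(\omega)=\{0\}$ for every $\omega=(u_0,g)\in M$.

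Next, I would prove the key estimate $\dim V^c(\omega)\le 2$. The central subspace $V^c(\omega)$ is a direct sum of Floquet bundles $W_n(\omega)$ whose Lyapunov exponents lie in the central Sacker--Sell interval $I_{n_0}\ni 0$; by Lemma \ref{floquet-bundle} each $W_n$ is either $1$- or $2$-dimensional and carries the distinguished zero-number level $2n$. Arguing by contradiction, suppose $V^c(\omega)$ contained Floquet bundles of two different indices $n_1<n_2$. Then using Corollary \ref{zero-center} and Lemma \ref{L:lap-number-control-on-mfds} applied inside the local invariant manifolds $M^{n_1,n_1}(\omega,\delta^*)$ and $M^{n_2,n_2}(\omega,\delta^*)$, I would produce, via small perturbations along each Floquet direction together with the minimality of $M$ (through Lemma \ref{sequence-limit}), two orbit-differences whose stabilized zero numbers (by Lemma \ref{sigma-function}(c)) would be simultaneously forced to equal the incompatible values $2n_1$ and $2n_2$. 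Hence $V^c(\omega)$ contains at most one Floquet bundle and $\dim V^c(\omega)\in\{0,1,2\}$.

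With $V^u(\omega)=\{0\}$ and $\dim V^c(\omega)\le 2$, the local center manifold $M^c(\omega,\delta^*)=M^{cu}(\omega,\delta^*)$ supplied by Lemma \ref{invari-mani} is a $C^1$ disk of dimension at most $2$, varying continuously in $\omega$. Linear stability rules out exponential forward decay for any orbit of $M$, while minimality together with the stable-leaf decomposition of Remark \ref{stable-leaf} forces every $(u,g)\in M$ sufficiently close to $\omega=(u_0,g)$ to satisfy $u-u_0\in W^c(\omega,\delta^*)$. Through the graph parametrization $v\mapsto v+h^{n_0,n_0}(v,\omega)$ this yields a fiber-preserving local homeomorphism $M\cap U\to V^c(\omega)\cong\mathbb{R}^2$ depending continuously on $\omega$. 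To globalize, I would apply Lemma \ref{epimorphism-thm} to the flow epimorphism $p\colon(M,\mathbb{R})\to(H(f),\mathbb{R})$, obtaining a residual invariant subset $Y_0\subset H(f)$ along which the local center-manifold charts patch into a continuous fiber-preserving injection $\Phi\colon M\cap p^{-1}(Y_0)\to\mathbb{R}^2\times Y_0$; compactness upgrades $\Phi$ to a homeomorphism onto its image $\hat M\subset\mathbb{R}^2\times Y_0$, and transporting $\Pi^t$ through $\Phi$ furnishes the desired skew-product flow. The hardest step will be the dimension estimate $\dim V^c(\omega)\le 2$: unlike Section 3, here the spatially-dependent case $f=f(t,x,u,u_x)$ offers no $S^1$-symmetry, so the bound must be extracted from zero-number control together with linear stability alone. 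A secondary subtlety, handled by the residual set from Lemma \ref{epimorphism-thm}, is the passage from the local center-manifold identification to a globally continuous fiber-preserving chart.
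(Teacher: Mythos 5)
Your approach diverges fundamentally from the paper's and, as sketched, contains a genuine gap at its central step. The paper does not try to bound $\dim V^c(\omega)$ and does not use center-manifold coordinates at all. Instead it defines a single, explicit global evaluation map
\[
\chi\colon M\to\mathbb{R}^2\times H(f),\qquad \chi(v,g)=(v(x_0),\,v_x(x_0),\,g),
\]
for a fixed $x_0\in S^1$. The residual set $Y_0$ is produced in Lemma~\ref{zero-number ncover}: linear stability plus \cite[Theorem II.4.5]{Shen1998} give a residual invariant $Y_1$ over which the fibers $M\cap p^{-1}(g)$ are finite, and combined with $Y'$ from Lemma~\ref{epimorphism-thm} and the constancy of zero numbers furnished by Lemma~\ref{sequence-limit}, one concludes that for $g\in Y_0$ any two distinct $v,w$ in the fiber have $v-w$ with only simple zeros. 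Simple zeros give $(v(x_0),v_x(x_0))\ne(w(x_0),w_x(x_0))$, so $\chi$ is injective over $Y_0$; continuity of the inverse follows from compactness of $M$. No dimension estimate on $V^c$ is needed and no local charts are patched.

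By contrast, the dimension claim $\dim V^c(\omega)\le2$ on which your whole construction rests is unproved and, in fact, does not follow from linear stability alone. Linear stability says only that the upper Lyapunov exponent is $\le 0$; the Sacker--Sell interval $I_{n_0}$ containing $0$ may still contain several Floquet exponents, so $V^c$ may be high-dimensional. The paper proves $\dim V^c\le 2$ only in Theorem~\ref{ergodic-thm}, and there it needs \emph{unique ergodicity} (which forces pure point spectrum) and restricts to $f=f(t,u,u_x)$; neither assumption is available here, and your sketch of producing ``incompatible zero numbers'' from hypothetical Floquet directions inside $V^c$ does not identify actual orbit differences in $M$ realizing those directions. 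Moreover, even granting the dimension bound, identifying a neighborhood of $(u_0,g)$ in $M$ with a subset of $W^c(\omega,\delta^*)$ and then globalizing the $\omega$-dependent linear charts $V^c(\omega)\cong\mathbb{R}^2$ into a single continuous embedding into $\mathbb{R}^2\times Y_0$ is nontrivial (the choice of basis for $V^c(\omega)$ has no canonical global trivialization) and is not supplied by Lemma~\ref{epimorphism-thm}, which only gives a residual set with good recurrence/lifting behavior, not a patching mechanism for charts. The paper's evaluation-map trick sidesteps all of this.
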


\begin{theorem}\label{main-corllary}
 Let $\mathcal{O}^+(u,g)$ be a uniformly stable forward orbit in $X\times H(f)$. Then the flow on its $\omega$-limit set $\omega(u,g)$ is topologically conjugate to a skew-product flow on some $\tilde{M}\subset \mathbb{R}^2\times H(f)$.
\end{theorem}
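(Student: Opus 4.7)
The plan is to reduce Theorem \ref{main-corllary} to Theorem \ref{main-result-thm} by establishing that $\omega(u,g)$ is itself a minimal invariant set for $\Pi^t$ on which the base projection is a distal extension, so that the residual set $Y_0\subset H(f)$ furnished by Theorem \ref{main-result-thm} can be upgraded to all of $H(f)$ via Lemma \ref{epimorphism-thm}.

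First I would invoke Remark \ref{stable-all} to transfer the modulus $\delta(\varepsilon)$ of uniform stability from $\mathcal{O}^+(u,g)$ to every forward orbit $\mathcal{O}^+(u_*,y_*)$ with $(u_*,y_*)\in \omega(u,g)$. Since $\Pi^t$ admits a continuous backward extension on $\omega(u,g)$, I would then argue two things: (i) the flow $(\omega(u,g),\mathbb{R})$ is distal, for if two distinct points $(u_1,y_0),(u_2,y_0)$ in the same fibre satisfied $d_X(\varphi(t_n,u_1,y_0),\varphi(t_n,u_2,y_0))\to 0$ along some sequence, then the common uniform-stability modulus would propagate this closeness to all subsequent times and, by precomposing with the backward flow extension, to all $t\in \mathbb{R}$, forcing $u_1=u_2$; and (ii) the set $\omega(u,g)$ is minimal, by the classical argument that any two points in $\omega(u,g)$ can be simultaneously $\delta(\varepsilon)$-approximated by tails of $\Pi^t(u,g)$, after which uniform stability transports the approximation forward in time.

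Granted minimality and distality of $\omega(u,g)$ — and noting that uniform stability of $\mathcal{O}^+(u,g)$ implies stability of the minimal set $\omega(u,g)$ — I would apply Theorem \ref{main-result-thm} with $M=\omega(u,g)$ to obtain an invariant residual set $Y_0\subset H(f)$ and a skew-product flow $\hat{\Pi}^t$ on some $\hat M\subset \mathbb{R}^2\times Y_0$ topologically conjugate to $(\omega(u,g)\cap p^{-1}(Y_0),\Pi^t)$. To remove the restriction to $Y_0$, I would apply Lemma \ref{epimorphism-thm} to the epimorphism $p:(\omega(u,g),\mathbb{R})\to (H(f),\mathbb{R})$: since $\omega(u,g)$ is minimal and distal, the lemma ensures that the residual set $Y'$ it produces is in fact all of $H(f)$. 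The lifting property of $Y'$ is precisely what guarantees that the conjugating homeomorphism constructed in Theorem \ref{main-result-thm} extends continuously and injectively from the residually-defined $\hat M$ to a compact set $\tilde M\subset \mathbb{R}^2\times H(f)$ conjugate to the full flow on $\omega(u,g)$.

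The main obstacle I foresee is the minimality step (ii): concretely verifying that a uniformly stable $\omega$-limit set over a distal minimal base is itself minimal, and carefully checking that the topological conjugacy from Theorem \ref{main-result-thm} extends from $p^{-1}(Y_0)$ to $p^{-1}(H(f))$ without loss of injectivity or continuity. Both are standard in the Sell--Shen--Yi theory of uniformly stable trajectories over almost periodic forcing (cf. \cite{Sell,Shen1998}), and I expect the argument to go through by combining the abstract lifting in Lemma \ref{epimorphism-thm} with distality of $\omega(u,g)$.
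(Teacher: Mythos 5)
Your proposal follows essentially the same route as the paper's proof. The paper cites \cite[Theorem II.2.8]{Shen1998} to assert in one step that uniform stability of $\mathcal{O}^+(u,g)$ over the minimal distal base $H(f)$ forces $\omega(u,g)$ to be minimal and to admit a distal flow extension, and then cites \cite[Theorem I.2.6]{Shen1998} (the content of Lemma \ref{epimorphism-thm}) to conclude $Y_0=H(f)$ before invoking Theorem \ref{main-result-thm}. You instead re-derive the minimality and distality of $\omega(u,g)$ from Remark \ref{stable-all} and the uniform-stability modulus, which is the underlying content of those cited theorems, and your observation that uniform stability yields stability of the resulting minimal set so Theorem \ref{main-result-thm} applies is exactly what the paper uses implicitly. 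One small point worth tightening: once you conclude $Y_0=H(f)$ via Lemma \ref{epimorphism-thm}, the conjugacy supplied by Theorem \ref{main-result-thm} is already defined on all of $\omega(u,g)=\omega(u,g)\cap p^{-1}(H(f))$, so there is no separate "extension" step to worry about as your last paragraph suggests; also note that the residual set $Y_0$ in Lemma \ref{zero-number ncover} is an intersection $Y_1\cap Y'$, and one needs distality (constant fiber cardinality over a minimal base) to see that $Y_1$, not just $Y'$, equals $H(f)$ — a detail the paper also glosses over by deferring to \cite{Shen1998}.
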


\begin{remark}
{\rm Theorems \ref{main-result-thm}-\ref{main-corllary} have generalized the result of Tere\v{s}\v{c}\'{a}k \cite{Te}
to time almost-periodic systems.
}
\end{remark}

In order to prove the Theorem \ref{main-result-thm}, we first prove the following two lemmas:

\begin{lemma}\label{zero-number ncover}
Let $M\subset X\times H(f)$ be a minimal set of ${\Pi}^t$ which is linearly stable. Then there
is an invariant and residual set $Y_{0}\subset H(f)$ such that for any $g\in Y_0$ and any two distinct elements $(v,g),(w,g)\in M\cap p^{-1}(g)$,  one has
\begin{equation}
z(\varphi(t,\cdot;v,g)-\varphi(t,\cdot;w,g))\equiv {\rm constant}, \,\,\text{ for all }t\in \mathbb{R}.
\end{equation}

\end{lemma}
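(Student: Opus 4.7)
The plan is to lift the problem to the fibered product, apply Lemma \ref{epimorphism-thm} there, and then use the stability (or linear stability) hypothesis to secure a \emph{fixed} approximating pair so that Lemma \ref{sequence-limit} can be invoked.

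Set $\Omega:=\{(v,w,g)\in X\times X\times H(f):(v,g),(w,g)\in M\}$, equipped with the joint flow $\hat{\Pi}^t(v,w,g)=(\varphi(t,\cdot;v,g),\varphi(t,\cdot;w,g),g\cdot t)$. Because $M$ is compact, minimal, and admits a flow extension, so does $\Omega$, and the natural projection $\hat p:\Omega\to H(f)$ is a flow epimorphism onto the minimal base $H(f)$. Applying Lemma \ref{epimorphism-thm} to $\hat p$ produces an invariant residual set $Y_0\subset H(f)$.

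Fix $g_0\in Y_0$ and distinct $(v_0,g_0),(w_0,g_0)\in M\cap p^{-1}(g_0)$. By almost periodicity and minimality of $H(f)$, choose a recurrence sequence $t_i\to+\infty$ with $g_0\cdot t_i\to g_0$. The lifting property of Lemma \ref{epimorphism-thm}, applied at $u_0=(v_0,w_0,g_0)$ and $y=g_0$, yields pairs $(\tilde v_i,\tilde w_i,g_0)\in\Omega$ with $\hat\Pi^{t_i}(\tilde v_i,\tilde w_i,g_0)\to (v_0,w_0,g_0)$; since the diagonal is $\hat\Pi^t$-invariant while the limit is off-diagonal, $\tilde v_i\ne\tilde w_i$ for all large $i$. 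Passing to a subsequence I may assume $(\tilde v_i,\tilde w_i,g_0)\to(\tilde v^*,\tilde w^*,g_0)\in\Omega$.

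The crucial step is to show $\tilde v^*\ne\tilde w^*$ and $\hat\Pi^{t_i}(\tilde v^*,\tilde w^*,g_0)\to(v_0,w_0,g_0)$ along this subsequence. Under the stability hypothesis both assertions are immediate from Definition \ref{stability}: if $\tilde v^*=\tilde w^*$ then $\|\tilde v_i-\tilde w_i\|\to 0$, and the stability modulus $\delta(\varepsilon,0)$ applied to the pair $(\tilde v_i,\tilde w_i)$ forces $\|\varphi(t,\cdot;\tilde v_i,g_0)-\varphi(t,\cdot;\tilde w_i,g_0)\|<\varepsilon$ for all $t\ge 0$ once $i$ is large, so evaluating at $t=t_i$ contradicts convergence of that quantity to $\|v_0-w_0\|>0$; the same modulus applied to $(\tilde v^*,\tilde v_i)$ and $(\tilde w^*,\tilde w_i)$ gives $\varphi(t_i,\cdot;\tilde v^*,g_0)\to v_0$ and $\varphi(t_i,\cdot;\tilde w^*,g_0)\to w_0$. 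Under the linear-stability hypothesis the same conclusions can be reached by combining the uniform exponential bound $\|\Phi(t,u,g)\|\le C_\varepsilon e^{\varepsilon t}$ for $(u,g)\in M$ (supplied by Sacker-Sell theory since $\sigma(M)\subset(-\infty,0]$) with a variation-of-constants estimate controlling the nonlinear remainder; this is the main technical obstacle, because the linearized bound only controls $\Phi$ on $M$, whereas the nonlinear difference of two $M$-solutions involves the linearization along segments that in general do not lie in $M$, so one must extend the bound to a neighborhood via continuity and compactness. With $\tilde v^*\ne\tilde w^*$ and $\hat\Pi^{t_i}(\tilde v^*,\tilde w^*,g_0)\to(v_0,w_0,g_0)$ secured, I invoke Lemma \ref{sequence-limit} with the \emph{fixed} pair $(u^1,g)=(\tilde v^*,g_0)$, $(u^2,g)=(\tilde w^*,g_0)$, target $(u_0^1,u_0^2,g_0)=(v_0,w_0,g_0)$, and sequence $t_i\to+\infty$, to conclude $z(\varphi(t,\cdot;v_0,g_0)-\varphi(t,\cdot;w_0,g_0))\equiv\mathrm{const}$ for all $t\in\mathbb{R}$, finishing the proof.
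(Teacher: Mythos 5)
Your route — lifting to the fibered product $\Omega\subset M\times_{H(f)}M$, applying Lemma \ref{epimorphism-thm} there, and then trying to stabilize the varying approximating sequence $(\tilde v_i,\tilde w_i)$ into a \emph{fixed} pair $(\tilde v^*,\tilde w^*)$ — works under the \emph{stability} hypothesis of Definition \ref{stability}, but it does not close under the actual hypothesis of the lemma, which is only \emph{linear} stability. You flag this yourself as ``the main technical obstacle,'' and it is in fact fatal rather than merely technical: the Sacker--Sell bound $\|\Phi(t,u,g)\|\le C_\varepsilon e^{\varepsilon t}$ on $M$ cannot be bootstrapped into a nonlinear estimate that controls $\varphi(t_i,\cdot;\tilde v^*,g_0)-\varphi(t_i,\cdot;\tilde v_i,g_0)$ along times $t_i\to\infty$, because the relevant $\varepsilon>0$ only gives sub-exponential growth, not contraction, and the initial separation $\|\tilde v_i-\tilde v^*\|$ goes to zero slowly and independently of how fast $t_i$ grows. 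A Gronwall argument therefore does not give the convergence $\varphi(t_i,\cdot;\tilde v^*,g_0)\to v_0$ needed to feed Lemma \ref{sequence-limit} a fixed pair. Without that, the sequence $(\tilde v_i,\tilde w_i)$ keeps drifting and Lemma \ref{sequence-limit} is never applicable.

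The paper takes a different and decisive route. It does not pass to the fibered product and does not try to freeze the approximating sequence by dynamics. Instead it invokes \cite[Theorem II.4.5]{Shen1998}: linear stability of the minimal set $M$ forces the fiber cardinality $\mathrm{card}(M\cap p^{-1}(g))$ to be a \emph{finite} constant $N$ on an invariant residual set $Y_1$. Intersecting with the residual set $Y'$ from Lemma \ref{epimorphism-thm}, it applies the lifting lemma directly to $p\colon M\to H(f)$ to get, for a recurrence sequence $\tau_n\to\infty$ with $\Pi^{\tau_n}(w,g)\to(w,g)$, points $(v_n,g)\in M\cap p^{-1}(g)$ with $\Pi^{\tau_n}(v_n,g)\to(v,g)$; the finiteness of the fiber then lets one \emph{pigeonhole} $v_n\equiv v_1$ along a subsequence, instantly producing the fixed pair $(v_1,g)$, $(w,g)$ that Lemma \ref{sequence-limit} requires. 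This finite-cover fact is exactly the ingredient your proposal is missing, and it is not derivable from the fibered-product lifting alone. If you want to complete a proof in your framework you would still need to import that finiteness result (or something equivalent), at which point your more elaborate lift to $\Omega$ adds nothing over working directly with $p\colon M\to H(f)$.
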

\begin{proof}
 Since $K$ is linearly stable, it follows from \cite[Theorem II.4.5]{Shen1998} that there exist some integer $N\ge 1$ and some invariant residual set $Y_1\subset H(f)$ such that, for each $g\in Y_1$,  {\rm card}$(M\cap p^{-1}(g))=N.$  Let $Y_0=Y_1\cap Y'$ where $Y'$ is as defined in Lemma \ref{epimorphism-thm}. Clearly, $Y_0$ is residual in $Y$. Then, for any $g\in Y_0$ and any $(v,g),(w,g)\in M\cap p^{-1}(g)$ with $v\neq w$, there is a sequence ${\tau}_{n}\rightarrow +\infty$ as $n\rightarrow \infty$ and $ \{(v_{n},g)\}\subset M\cap p^{-1}(g)$ such that ${\Pi}^{\tau_{n}}(w,g)\rightarrow (w,g)$ and $\Pi^{\tau_{n}}(v_{n},g)\longrightarrow (v,g)$ as $n\rightarrow \infty$.
Recall that card$(M\cap p^{-1}(g))=N<\infty$ for each $g\in Y_0$. One may assume without loss of generality that $v_n\equiv v_1$ for all $n\geq 1$. Consequently, one obtains ${\Pi}^{\tau_n}(v_1,g)\rightarrow (v,g)$ as $n\rightarrow \infty$. Thus, Lemma \ref{sequence-limit} implies the conclusion.
\end{proof}

\begin{lemma}
\label{stable-imply-linear-stable}
 Let $M\subset X\times H(f)$ be a minimal invariant set of the skew-product semiflow \eqref{equation-lim2} generated by \eqref{equation-lim1}. If $M$ is stable, then
$M$ is linearly stable.
\end{lemma}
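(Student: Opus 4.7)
My plan is to argue by contradiction. Assume $\lambda_M > 0$. By the definition of the upper Lyapunov exponent, there exist $(u_0, y_0) \in M$, a unit vector $v_0 \in X$, and a sequence $t_n \to \infty$ such that $\|\Phi(t_n, u_0, y_0) v_0\|_X \geq e^{\gamma t_n}$ for some $\gamma > 0$. Set $\eta_n := \|\Phi(t_n, u_0, y_0) v_0\|_X^{-1}$, so that $\eta_n \to 0$ exponentially.

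The strategy is to exhibit arbitrarily small perturbations of the orbit $\mathcal{O}^+(u_0, y_0)$ that nevertheless separate from it by a fixed amount, directly violating the stability hypothesis. Using the $C^{1+\alpha}$-smoothness of the cocycle and the integral form of the mean value theorem, I would write
\[
\varphi(t_n, u_0 + \eta_n v_0, y_0) - \varphi(t_n, u_0, y_0) = \eta_n \Phi(t_n, u_0, y_0) v_0 + R_n,
\]
with remainder
\[
R_n = \eta_n \int_0^1 \big[\Phi(t_n, u_0 + s \eta_n v_0, y_0) - \Phi(t_n, u_0, y_0)\big] v_0 \, ds.
\]
By the choice of $\eta_n$, the main term $\eta_n \Phi(t_n, u_0, y_0) v_0$ has norm exactly $1$. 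Once I show $\|R_n\|_X \to 0$, the left-hand side stays close to $1$ for all large $n$, while $\|(u_0 + \eta_n v_0) - u_0\|_X = \eta_n \to 0$. Applying the stability of $\mathcal{O}^+(u_0, y_0)$ with $\varepsilon = 1/2$ would then force $\|\varphi(t, u_0 + \eta_n v_0, y_0) - \varphi(t, u_0, y_0)\|_X < 1/2$ for every $t \ge 0$ and $n$ large, contradicting the estimate at $t = t_n$.

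The principal obstacle is establishing the vanishing of the remainder $\|R_n\|_X \to 0$ under the scaling $\eta_n \sim \|\Phi(t_n, u_0, y_0) v_0\|_X^{-1}$. This reduces to controlling the Hölder oscillation of $\Phi(t_n, \cdot, y_0)$ over the segment $[u_0, u_0 + \eta_n v_0]$ --- of length $\eta_n$ --- by a quantity of order $\eta_n^{\alpha}\, \|\Phi(t_n, u_0, y_0)\|_{L(X)}$; since the latter product is $\eta_n^\alpha \cdot \eta_n^{-1} \cdot \eta_n \le \eta_n^{\alpha} \cdot O(1)$ in the relevant norms, the exponent $\alpha > 0$ would indeed guarantee the remainder vanishes. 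To obtain this Hölder estimate one exploits the parabolic smoothing of the cocycle (the operator $\Phi(t, u, y)$ is compact for each $t > 0$), the uniform $C^3$-regularity of $f$ on the hull $H(f)$, and the compactness of the invariant set $M$, which together yield uniform Fréchet-derivative bounds along the orbit that are strong enough for the argument to close.
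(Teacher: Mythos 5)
Your proposal has a genuine gap in the remainder estimate. You need the H\"older oscillation $\|\Phi(t_n,u,y_0)-\Phi(t_n,u_0,y_0)\|$ over a segment of length $\eta_n$ to be of order $\eta_n^{\alpha}\|\Phi(t_n,u_0,y_0)\|$, i.e.\ a H\"older constant for $u\mapsto\Phi(t_n,u,y_0)$ that grows at most like the operator norm $\|\Phi(t_n,u_0,y_0)\|$. But that constant is \emph{not} controlled by $\|\Phi(t_n,u_0,y_0)\|$ as $t_n\to\infty$: differentiating the variational equation in the base point and applying Duhamel's formula shows that $\partial_u\Phi(t,u,y_0)$ generically grows like $\|\Phi(t,u_0,y_0)\|^2$, because a solution of the linearization gets fed back quadratically through the coefficients. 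Already in the scalar model $\dot x=\lambda x+x^2$, $\lambda>0$, one has $\Phi(t,u)=\exp\bigl(\lambda t+2\int_0^t x(s,u)\,ds\bigr)$, so $\partial_u\Phi(t,0)=2e^{\lambda t}\int_0^t e^{\lambda s}ds\sim \tfrac{2}{\lambda}e^{2\lambda t}$, while $\|\Phi(t,0)\|=e^{\lambda t}$; with your scaling $\eta_n=e^{-\lambda t_n}$ the remainder $R_n$ is then $O(1)$, not $o(1)$. Compactness of $M$ and $C^3$-smoothness of $f$ give H\"older constants for $\Phi(t,\cdot,y)$ that are uniform in $(u,y)$ only for $t$ in a \emph{bounded} interval; they say nothing about the growth of those constants in $t$, and that growth is exactly what defeats the Taylor-remainder argument.

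The paper takes a completely different, PDE-specific route that never confronts this issue. It writes the nonlinear difference $v(t)=\varphi(t;u_1,g)-\varphi(t;u_0,g)$ and the linearized solution $v^0(t)=\Phi(t,u_0,g)\cdot 1$ as solutions of two scalar linear parabolic equations on $S^1$ with slightly different coefficients, then removes the first-order terms from both by a Liouville transformation $\tilde v=\psi v$ followed by a moving frame $\bar v(t,x)=\tilde v(t,x+c(t))$, reducing them to $\bar v_t=\bar v_{xx}+\bar b(t,x)\bar v$ and $\bar v^0_t=\bar v^0_{xx}+\bar b^0(t,x)\bar v^0$. Stability, together with interior parabolic a priori estimates, gives $|\bar b-\bar b^0|\le\epsilon$ uniformly in $t\ge 0$ once $u_1-u_0$ is small in $C^3$. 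Choosing $u_1-u_0\in{\rm Int}\,X^+$ so that $\bar v>0$, a maximum-principle comparison yields $-Ce^{-\epsilon t}\bar v(t)\le\bar v^0(t)\le Ce^{\epsilon t}\bar v(t)$; since $\bar v(t)$ stays bounded by stability, $\limsup_{t\to\infty}\tfrac{1}{t}\ln\|\Phi(t,u_0,g)\|\le\epsilon$, and letting $\epsilon\to 0$ gives $\lambda_M\le 0$. In short, the paper compares \emph{potentials} in a transformed self-adjoint form rather than comparing the nonlinear and linearized flows via a Taylor remainder, and that comparison is insensitive to how fast the cocycle grows in $t$.
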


\begin{proof}
 Assume that $M$ is stable. Then
for any $(u_0,g)\in M$, one has
 \begin{equation}\label{E:solu-suffi-close}
\|\varphi(t,\cdot;u_1,g)-\varphi(t,\cdot;u_0,g)\|\ll 1\,\,\, \text{ for all } t\ge 0,
\end{equation}
whenever $u_1\in X$ with $\|u_1-u_0\|$ being sufficiently small.

 Let
 $v(t,x)=\varphi(t,x;u_1,g)-\varphi(t,x;u_0,g)$. Then $v(t,x)$ satisfies
 \begin{equation}\label{E:diff-eqns-v-stable}
 v_t=v_{xx}+a(t,x)v_x+b(t,x)v,\quad x\in S^1.
 \end{equation}
 Here
 \begin{equation}\label{E:a(t,x)-es}
 a(t,x)=\int_0^ 1 \partial_4 g(t,x,\varphi(t,x;u_0,g), \varphi_x(t,x;u_0,g)+s(\varphi_x(t,x;u_1,g)-\varphi_x(t,x;u_0,g)))ds
 \end{equation}
 and
 $$
 b(t,x)=\int_0^ 1 \partial_3 g(t,x,\varphi(t,x;u_0,g)+s(\varphi(t,x;u_1,g)-\varphi(t,x;u_0,g)),\varphi_x(t,x;u_1,g))ds.
 $$
As in \eqref{E:transform-congr-1} (see \cite[p.247-248]{Chow1995} for the transformations), we let \begin{equation}\label{E:transform-congr-2}
\bar v(t,x):=r(t,x+c(t))\cdot v(t,x+c(t)),
 \end{equation}
 where the function $c(t)$ satisfies
$$\dot c(t)=-a_0(t),\,\,\text{ with }\,\, a_0(t)=\frac{1}{2\pi}\int_0^{2\pi}a(t,z)dz,$$ and
$$
 r(t,x)=\exp\Big(\frac{1}{2}\int_0^ x (a(t,z)-a_0(t))dz\Big).
$$
Then the equation \eqref{E:diff-eqns-v-stable} can be changed into
\begin{equation}\label{no-difference-gradient2}
 \bar v_t=\bar v_{xx}+\bar b(t,x)\bar v,\quad x\in S^1,
 \end{equation}
where $\bar b (t,x)=\tilde b(t,x+c(t))$ with
\begin{equation}\label{E:tilde-b-expression}
\tilde{b}(t,x)=b(t,x)+\frac{1}{2}\int_0^x[a(t,z)-a_0(t)]_tdz
-\frac{1}{4}(a(t,x)^2-a_0(t)^2)-\frac{1}{2}a_x(t,x).
\end{equation}
 \vskip 2mm

On the other hand, we consider the linearized equation, associated with $\varphi(t,x;u_0,g)$,
\begin{equation}\label{linear-variation-M}
 v_t=v_{xx}+a^0(t,x)v_x+b^0(t,x)v,
\end{equation}
 where
\begin{equation}\label{E:a0-(t,x)-es}
 a^0(t,x)=\partial_4 g(t,x,\varphi(t,x;u_0,g),\varphi_x(t,x;u_0,g))
 \end{equation}
 and
 $$
 b^0(t,x)=\partial_3 g(t,x,\varphi(t,x;u_0,g),\varphi_x(t,x;u_0,g))).
 $$
  By similar transformations as in \eqref{E:transform-congr-2}, we introduce
  \begin{equation}\label{E:transform-congr-3}
\bar v^0(t,x):=r^0(t,x+c^0(t))\cdot v^0(t,x+c^0(t)),
 \end{equation}
 for which $r^0(t,x)=\exp\Big(\frac{1}{2}\int_0^ x (a^0(t,z)-a^0_0(t))dz\Big)$, and $\dot c^0(t)=-a_0^0(t)$ with $a_0^0(t)=\frac{1}{2\pi}\int_0^{2\pi}a^0(t,z)dz$.
 So, the equation \eqref{linear-variation-M} can be changed into
 \begin{equation}\label{no-gradient2}
 \bar v^0_t=\bar v^0_{xx}+\bar b^0(t,x)\bar v^0,\quad x\in S^1
 \end{equation}
with $\bar b^0 (t,x)=\tilde b^0(t,x+c^0(t))$, where
\begin{equation}\label{E:tilde-b-0-expression}
\tilde{b}^0(t,x)=b^0(t,x)+\frac{1}{2}\int_0^x[a^0(t,z)-a^0_0(t)]_tdz
-\frac{1}{4}(a^0(t,x)^2-a^0_0(t)^2)-\frac{1}{2}a^0_x(t,x).
\end{equation}

For the coefficients $\bar{b}$ (in \eqref{no-difference-gradient2}) and $\bar b^0$ (in \eqref{no-gradient2}), {\it we claim that: Given any $T>0$ and any $\epsilon>0$, one has
\begin{equation}\label{b-b-bar}
 \bar b(t,\cdot)-\epsilon\le \bar  b^0(t,\cdot)\le \bar  b(t,\cdot)+\epsilon,\,\, \text{ for } \,\, t\ge T/2,
 \end{equation} whenever the initial value $\|u_1-u_0\|$ is sufficiently small.}
 For this purpose, we need to compare each term in \eqref{E:tilde-b-expression} with that in \eqref{E:tilde-b-0-expression}. By virtue of \eqref{E:solu-suffi-close}, it is clear that $|b(t,x)-b^0(t,x)|$ and $|a(t,x)-a^0(t,x)|$ are sufficiently small for any $t\ge 0$, whenever the initial value  $\|u_1-u_0\|$ is sufficiently small. So, it remains to show that $|a_x(t,x)-a^0_x(t,x)|$ and $|a_t(t,x)-a^0_t(t,x)|$ are sufficiently small whenever the initial value  $\|u_1-u_0\|$ is sufficiently small.

 In order to estimate $|a_x(t,x)-a^0_x(t,x)|$, it suffices to estimate the second-order derivative   $\abs{v_{xx}(t,x)}$ (by differentiating \eqref{E:a(t,x)-es} and \eqref{E:a0-(t,x)-es} with respect to $x$, respectively).
  So, fix any $T>0$. By the standard interior Schauder estimate (see \cite[Theorem 3.5]{Friedman}) for the linear equation \eqref{E:diff-eqns-v-stable} on the domain $D_0:=[0,T]\times S^1$, one can obtain that $\abs{v_{xx}(t,x)}\ll 1$ for any $(t,x)\in [T/2,T]\times S^1$, because \eqref{E:solu-suffi-close} guarantees that $\abs{v}_{0,D_0}\triangleq \sup_{D_0}\abs{v(t,x)}$ is sufficiently small.
  Moreover, for any $n\ge 1$, we can even sequently apply the interior Schauder estimate (see \cite[Theorem 3.5]{Friedman}) for \eqref{E:diff-eqns-v-stable} on the domain $D_n:=[\frac{nT}{2},\frac{nT}{2}+T]\times S^1$ to obtain that $\abs{v_{xx}(t,x)}\ll 1$ for any $(t,x)\in [\frac{nT}{2}+\frac{T}{2},\frac{nT}{2}+T]\times S^1$, because \eqref{E:solu-suffi-close} guarantees that $\abs{v}_{0,D_n}\triangleq \sup_{D_n}\abs{v(t,x)}$ is sufficiently small uniformly for all $n\ge 1$. As a consequence, it follows that
  \begin{equation*}\label{E:v-2nd-order-gradiant}
  \abs{v_{xx}(t,x)}\ll 1,\,\text{ for all }t\ge T/2\text{ and }x\in S^1,
  \end{equation*}
  which implies that $|a_x(t,x)-a^0_x(t,x)|$ is sufficiently small for all $t\ge T/2$ and $x\in S^1$.

 As for the estimate of $|a_t(t,x)-a^0_t(t,x)|$, by differentiating \eqref{E:a(t,x)-es} and \eqref{E:a0-(t,x)-es} with respect to $x$ twice, it suffices to estimate the third-order derivative   $\abs{v_{xxx}(t,x)}$. So, together with the fact that the nonlinearity $g\in C^{3}$, one can again apply  the interior Schauder estimate (see \cite[Theorem 3.10]{Friedman}) and use the similar arguments above to obtain that $\abs{v_{xxx}(t,x)}\ll 1,\,\text{ for all }t\ge T/2\text{ and }x\in S^1$; and hence,  $|a_t(t,x)-a^0_t(t,x)|$ is sufficiently small for all $t\ge T/2$ and $x\in S^1$. Thus, we have proved the claim.


Based on the claim, for any $\epsilon>0$ and $T>0$, one can choose some $u_1\in X$ with $\|u_1-u_0\|$ being sufficiently small and $u_1-u_0\in {\rm Int}X^+$ such that \eqref{b-b-bar} holds.
 As a consequence, by the comparison principle for parabolic equations, the corresponding solutions of \eqref{no-difference-gradient2} and \eqref{no-gradient2} satisfy
 \begin{equation}\label{E:comparison-prin}
- Ce^{-\epsilon t}\bar v(t,\cdot)\le \bar v^0(t,\cdot)\le  Ce^{\epsilon t}\bar v(t,\cdot)\quad \forall t\ge T/2,
 \end{equation}
 where
  $C>0$ is such that the corresponding initial value satisfies $$-C \bar v(0,\cdot)\le\bar  v^0(0,\cdot)\le C \bar v(0,\cdot),$$ where $\bar v^0(0,\cdot)\in \mathrm{Int} X^+$ with $\|\bar v^0(0,\cdot)\|=1$. Observe also that  $\norm{\bar v(t,\cdot)}$ is bounded for all $t\ge 0$. Then, \eqref{E:comparison-prin} implies that $\sup_{t\ge T/2,x\in S^1}e^{-\varepsilon t}\abs{\bar v^0(t,\cdot)}$ is bounded. By applying the  Schauder estimate again as above, we obtain that
 $e^{-\varepsilon t}\|\bar v^0(t,\cdot)\|$ is bounded for all $t\ge T$. It then entails that
 $$
  \limsup_{t\to\infty}\frac{\ln \|\bar v^0(t,\cdot)\|}{t}\le \epsilon.
 $$
 It then follows from Lemma \ref{upper-lya} that
 \begin{equation}\label{E:Lypunov-control}
 \limsup_{t\to\infty}\frac{\ln \|\bar\Phi(t;u_0,g)\|}{t}\le \epsilon,
  \end{equation}
  where $\bar \Phi(t;u_0,g)$ is the solution operator of the transformed equation \eqref{no-gradient2} (from linearized equation \eqref{linear-variation-M} associated with $\varphi(t,x;u_0,g)$).
   By arbitrariness of $\epsilon$ and arbitrariness of $(u_0,g)\in M$, we have $\bar\lambda_M\le 0$, where $\bar\lambda_M$ is the upper Lyapunov exponent of \eqref{no-gradient2} on $M$. Note also that,
as in Remark \ref{no-chang-spetr}, the transformation form \eqref{E:transform-congr-3} also preserves the Lypapunov exponents of \eqref{no-gradient2} from those of \eqref{linear-variation-M}. Then the upper Lyapunov exponent of \eqref{linear-variation-M} on $M$ is also non-positive, which entails that $M$ is linearly-stable.
 The proof of this lemma is completed.

  \end{proof}


\begin{proof}[Proof of Theorem \ref{main-result-thm}]
Due to Lemma \ref{stable-imply-linear-stable}, we only need to consider the case that $M$ is linearly stable. Fix some $x_0 \in S^1$, we define the following mapping:
\begin{equation}\label{embeding-map}
\chi:M (\subset X\times H(f))\longrightarrow \mathbb{R}^2\times H(f);(v,g)\longmapsto (v(x_0),v_x(x_0),g).
\end{equation} Clearly, $\chi$ is continuous and onto $\chi(M)\subset \mathbb{R}^2\times H(f)$.
  Moreover, we can obtain that $\chi|_{_{M\cap p^{-1}(Y_0)}}$ is injective, where $Y_0\subset H(f)$ is defined in Lemma \ref{zero-number ncover}. In fact, choose any $g\in Y_0$ and two distinct elements $(v,g),(w,g)\in M\cap p^{-1}(g)$. It then follows from Lemma \ref{zero-number ncover} that
$z(\varphi(t,\cdot;v,g)-\varphi(t,\cdot;w,g))\equiv$ constant for all $t\in \mathbb{R}$. So, $v-w\in X$ possesses only simple zeros. Therefore, $(v(x_{0}),v_{x}(x_{0}))\neq(w(x_{0}),w_{x}(x_{0}))$, which implies that $\chi(v,g)\neq\chi(w,g)$, and hence, $\chi|_{_{M\cap p^{-1}(Y_0)}}$ is injective.

Let $\hat{M}\triangleq \{\chi(v,g)\in \chi(M):g\in Y_0\}\subset \mathbb{R}^2\times Y_0$. Since $\chi|_{_{M\cap p^{-1}(Y_0)}}$ is injective and onto $\hat{M}$, $\Pi^t$ naturally induces a (skew-product) flow $\hat{\Pi}^t$ on $\hat{M}$ as:
\begin{equation}\label{topological-conju}
\hat{\Pi}^t(\chi(v,g))\triangleq\chi(\varphi(t,\cdot;v,g),g\cdot t)\,\,\,\text{ for any }\chi(v,g)\in \hat{M}.
\end{equation}
We will show that the map $(\chi|_{_{M\cap p^{-1}(Y_0)}})^{-1}$ is also continuous from $\hat{M}$ to $M\cap p^{-1}(Y_0)$. Indeed, let $\chi(v^n,g^n)\to \chi(v,g)$ in $\hat{M}$ (that is, $(v^n(x_0),v^n_x(x_0),g^n)\to (v(x_0),v_x(x_0),g)$ with $g^n\to g$ in $Y_0$). By the compactness of $M$,
one may assume without loss of generality that $(v^n,g^n)\to (w,g)\in M$. This then implies that
$(v(x_0),v_x(x_0))=(w(x_0),w_x(x_0))$. Recall that $(v,g),(w,g)\in M$ with $g\in Y_0$. Suppose that $v\ne w$. Then Lemma \ref{zero-number ncover} implies that $v-w$ possesses only simple zeros, a contradiction. Consequently, $v=w$, and hence,  $(v^n,g^n)\to (v,g)\in M$. Thus, we have proved  $(\chi|_{_{M\cap p^{-1}(Y_0)}})^{-1}$ is continuous from $\hat{M}$ to $M\cap p^{-1}(Y_0)$. By virtue of \eqref{topological-conju}, $(M\cap p^{-1}(Y_0),\Pi^t)$ is topologically conjugate to the flow $(\hat{M},\hat{\Pi}^t)$ on $\mathbb{R}^2\times Y_0$.
\end{proof}

\begin{proof}[Proof of Theorem \ref{main-corllary}]
Since $(H(f),\mathbb{R})$ is minimal and distal, the $\omega$-limit set $\omega(u,g)$ of the semi-orbit $\mathcal{O}^+(u,g)$ for the skew-product semiflow $\Pi^t$ is a minimal set which admits a distal flow extension (See \cite[Theorem II.2.8]{Shen1998}). It then follows from Lemma \ref{epimorphism-thm} that the residual set $Y_0=H(f)$. So, it follows from Theorem \ref{main-result-thm} that $\omega(u,g)$  is topologically conjugate to a skew-product flow on some $\tilde{M}\subset \mathbb{R}^2\times H(f)$.
\end{proof}

\section*{Acknowledgements}
The authors are greatly indebted to the anonymous referee for very careful reading and providing lots of very inspiring and helpful comments and suggestions which led to much improvement of two earlier versions of this paper.

\end{document}